\newtheorem{thm}{Theorem}[section]
\newtheorem{lemma}[thm]{Lemma}
\newtheorem{cor}[thm]{Corollary}
\newtheorem{propn}[thm]{Proposition}
\newtheorem{definition}[thm]{Definition}
\theoremstyle{plain}{\theorembodyfont{\rmfamily}%

\theoremstyle{plain}{\theorembodyfont{\rmfamily}%

\theoremstyle{plain}{\theorembodyfont{\rmfamily}%

\theoremstyle{plain}{\theorembodyfont{\rmfamily}%
\newtheorem{remark}[thm]{Remark}

\renewcommand{\equiv}{:=}

\def\enorm#1{\|#1\|_2}

\def\norm#1{\|#1\|} 
\def\range{\mbox{range}}

\def\point {\bar x}
\def\set {\Omega}
\def\N{I\!\!N}
\def\R{{I\!\!R}}
\def\E{\R^n}
\def\Ball{I\!\!B}
\def\A{A}
\def\B{B}
\def\Id{\mbox{Id}}
\def\Fix{{\mbox{Fix\ }}}
\def\argmin#1{\underset{x\in #1}{\text{argmin}}}
\def\supp{\textup{supp}}
\def\rig{\right\rbrace}
\def\lef{\left\lbrace}
\def\rign{\right\|}
\def\lefn{\left\|}
\def\sparse{A_s}

\def\M{M^\dagger}

\newcommand{\dist}[2]{d_{#2}\left( #1\right)}


\begin{document}

\title{Alternating Projections and Douglas-Rachford for Sparse Affine Feasibility\footnote{Submitted to IEEE Transactions on Signal Processing, owner of Copyright.}}

\author{ 
Robert Hesse,\thanks{Robert Hesse is with the Institut f\"ur Numerische und Angewandte Mathematik\
Universit\"at G\"ottingen,\ Lotzestr.~16--18, 37083 G\"ottingen, Germany. E-mail: {\tt hesse@math.uni-goettingen.de}. This author
was supported by DFG grant SFB 755-C2.} \and
D.\ Russell Luke\thanks{D. Russell Luke is with Institut f\"ur Numerische und Angewandte Mathematik\
Universit\"at G\"ottingen,\ Lotzestr.~16--18, 37083 G\"ottingen, Germany. E-mail: {\tt r.luke@math.uni-goettingen.de}.  This author
was supported by DFG grants SFB 755-C2 and SFB 755-A4.} \and
and Patrick Neumann\thanks{Patrick Neumann is with the Institut f\"ur Numerische und Angewandte Mathematik\
Universit\"at G\"ottingen,\ Lotzestr.~16--18, 37083 G\"ottingen, Germany. E-mail: {\tt p.neumann@math.uni-goettingen.de}.
This author was supported by DFG grant GRK1023.} 
}

\maketitle

\begin{abstract}
The problem of finding a vector with the fewest nonzero elements 
that satisfies an underdetermined system of linear equations is 
an NP-complete problem that is typically solved numerically via 
convex heuristics or nicely-behaved nonconvex relaxations.  In   
this work we consider elementary methods based on projections 
for solving a {\em sparse feasibility} problem  
without employing convex heuristics.  In a recent paper Bauschke, 
Luke, Phan and Wang (2014) showed that, locally, the fundamental 
method of alternating projections {\em must} converge linearly to 
a solution to the sparse feasibility problem with an affine constraint. 
In this paper we apply different analytical tools that allow us to 
show {\em global} linear convergence of  alternating projections  
under familiar constraint qualifications.
These analytical tools can also be applied to other algorithms.  This is 
demonstrated with the prominent Douglas-Rachford algorithm where 
we establish local linear convergence of this method applied 
to the sparse affine feasibility problem.  
\end{abstract}


{\bf Keywords:}
Compressed sensing,
convergence,
euclidean distance,
iterative methods,
linear systems,
minimization methods,
optimization,
projection algorithms,
relaxation methods

%



\maketitle

\section{Introduction}
%
%
%
%

Numerical algorithms for nonconvex optimization models are often eschewed because the usual optimality criteria around which 
numerical algorithms are designed do not distinguish 
solutions from critical points.   This issue comes into sharp relief with what has become known as 
the {\em sparsity optimization problem} \cite[Eq.(1.3)]{CandTao}:
\begin{equation}\label{eq:sparseaffine}
\textup{minimize }\norm{x}_0~\textup{subject to }Mx=p,
\end{equation}
where $m,n\in\N$, the nonnegative integers, with $m<n$, $M\in\R^{m\times n}$ is a real $m-$by$-n$ matrix of full rank and 
$\norm{x}_0:=\sum_{j=1}^n|\mbox{sign}(x_j)|$ with $\mbox{sign}(0)=0$ is the number of nonzero entries of a real 
vector $x\in \R^n$ of dimension $n$.  The first-order necessary optimality condition for this problem is (formally)
\begin{equation}\label{e:foc dumb}
   0\in \partial\left(\norm{x}_0 + \iota_B(x)\right),
\end{equation}
where $\partial$ is the {\em subdifferential}, 
\begin{equation}\label{e:B}
   B:= \left\{ x\in\R^n\middle|~Mx=p\right\}
\end{equation}
 and $\iota_B(x)=0$ if $x\in B$ and $+\infty$ otherwise. The function $\|\cdot\|_0$ is 
subdifferentially regular \cite{Le12}, so all of the varieties of the subdifferential 
in \eqref{e:foc dumb} are equivalent.  
It can be shown \cite{Hiriart-Urruty12} that 
{\em every} point in $B$ satisfies \eqref{e:foc dumb} and so this  
is uninformative as a basis for numerical algorithms.  

In this note we explore the following question: when do elementary numerical algorithms 
for solving some related nonconvex problem converge locally and/or globally?

The current trend for solving this problem, sparked by the now famous paper of Cand\`es and Tao \cite{CandTao}, 
is to use convex relaxations.  Convex relaxations have the advantage that 
every point satisfying the necessary optimality criteria is also a solution to the relaxed optimization problem.  
This certainty comes at the cost of imposing difficult-to-verify restrictions on the affine constraints  
\cite{TillmannPfetsch13} in order to guarantee 
the correspondence of solutions to the relaxed problem to solutions to the original problem.   Moreover,  
convex relaxations can lead to a tremendous increase
in the dimensionality of the problem (see for example \cite{CandesEldarStrohmerVononinski}).  

In this work we present a different {\em nonconvex} approach; one with the advantage that the available 
algorithms are simple to apply, (locally) linearly convergent, and the problem formulation stays close
in spirit if not in fact to the original problem, thus avoiding the curse of dimensionality.   We also 
provide conditions under which fundamental algorithms applied to the nonconvex model are 
globally convergent.

Many strategies for relaxing \eqref{eq:sparseaffine} have been studied in the last 
decade.  In addition to convex, and in particular $\ell_1$, relaxations, 
authors have studied dynamically reweighted $\ell_1$ (see \cite{BorLuke10b, CandesBoydWakin07}) as well as 
relaxations to $\ell_p$ semi-metric ($0<p<1$) 
(see, for instance, \cite{LaiWang10}).  
The key to all relaxations, whether they be convex or not, is the correspondence between the 
relaxed problem and \eqref{eq:sparseaffine}.  Cand\`es and Tao \cite{CandTao} introduced the 
restricted isometry property of the matrix $M$ as a sufficient condition for the correspondence of 
solutions to \eqref{eq:sparseaffine} with solutions to the {\em convex} problem of finding the point $x$ 
in the set $B$ with smallest $\ell_1$-norm.  This condition was generalized in
\cite{BloomensathDavies09, BloomensathDavies10, BeckTeb}
in order to show {\em global} convergence of the simple projected gradient 
method for solving the problem
\begin{equation}\label{e:BeckTeboulle}
\textup{minimize }\tfrac12\norm{Mx-p}_2^2~\textup{subject to }x\in \sparse,   
\end{equation}
where 
\begin{equation}\label{e:A}
\sparse:= \left\{ x\in\R^n\middle|~\norm{x}_0\leq s\right\},
\end{equation}
the set of $s$-sparse vectors for a fixed $s\leq n$. Notable in this model is that the sparsity ``objective'' is in the constraint, and one must specify a priori the 
sparsity of the solution.  Also notable is that the problem \eqref{e:BeckTeboulle} is still nonconvex, although 
one can still obtain global convergence results.  

Inspired by \eqref{e:BeckTeboulle}, and the desire to stay as close to \eqref{eq:sparseaffine} as possible, 
we model the optimization problem as a  {\em feasibility} problem
\begin{equation}\label{eq:feasibility}
 \textup{Find }\bar x\in \sparse\cap B,
\end{equation}
where $\sparse$ and $B$ are given by \eqref{e:A} and \eqref{e:B}, respectively.  
For a well-chosen sparsity parameter $s$, solutions to \eqref{eq:feasibility} exactly correspond
to solutions to \eqref{eq:sparseaffine}.  
Such an approach was also proposed in \cite{CombTrus90} where the authors proved local convergence of
a simple alternating projections algorithm for feasibility with a sparsity set.  
Alternating projections is but one of a huge variety of projection 
algorithms for solving feasibility problems. The goal of this 
paper is to show {\em when} and {\em how fast} fundamental projection algorithms applied to this 
nonconvex problem converge.  Much of this depends on the abstract geometric 
structure of the sets $\sparse$ and $B$; for affine sparse feasibility this is well-defined and surprisingly 
simple.

The set $B$ is an affine subspace and $\sparse$ is a nonconvex set. 
However, the set $\sparse$ is the union of finitely many subspaces, each 
spanned by $s$ vectors from the standard basis for $\E$ \cite{BLPWII}. 
We show in  \eqref{eq:P_A} that one can easily calculate a 
\emph{projection} onto $\sparse$.

For $\Omega\subset\R^n$ closed and nonempty, 
we call the mapping $P_{\Omega}:\R^n\rightrightarrows \Omega$ the \emph{projector} onto $\Omega$ defined by 
\begin{equation}\label{d:projection}
P_{\Omega}(x)\equiv\textup{argmin}_{y\in \Omega} \norm{x-y}.  
\end{equation}
This is in general 
a set-valued mapping, indicated by the notation ``$\rightrightarrows$''~\cite[Chapter 5]{VA}.  
We call a point $\point\in P_{\Omega}(x)$ a \emph{projection}.  It is well known that 
if the set $\Omega$ is closed, nonempty and {\em convex} then the projector is single-valued.  In a reasonable abuse 
of terminology and notation, we will write $P_\Omega(x)$ for the (there is only one) projection onto a convex
set $\Omega$.    
An operator closely related to the projector is the reflector. We call the (possibly set-valued) 
mapping $R_\Omega:\R^n\rightrightarrows \R^n$
 the {\em reflector} across $\Omega$ defined by $R_\Omega(x)\equiv 2 P_\Omega(x) - x$.  
We call a point in $R_\Omega(x)$ a {\em reflection}.  As with projections, when $\Omega$ is convex, 
we will write $R_\Omega(x)$ for the (there is only one) reflection.     
The projection/reflection methods discussed in this work are easy to implement, computationally efficient and 
lie at the foundation of many first-order methods for optimization.  
\begin{definition}[alternating projections]\label{AP}
For two closed sets $\Omega_1,\Omega_2\subset\E$ the mapping 
\begin{equation}\label{eq:AP}
T_{AP}x\equiv P_{\Omega_1} P_{\Omega_2} x
\end{equation}
is called the alternating projections operator.  
The corresponding alternating projections algorithm is given by the iteration
$x^{k+1}\in T_{AP}x^k,$ $k\in\N$
with $x^0$ given. 
\end{definition}
\noindent Other well known algorithms, such as steepest descents for minimizing the average of squared distances between 
sets, can be formulated as instances of the alternating projections  algorithm \cite{Pierra76, Pierra84}.  We show below (Corollary \ref{t:API})
that alternating projections  corresponds to projected gradients for problems with special linear structure.  

\begin{definition}[Douglas-Rachford]\label{AAR}
For two closed sets $\Omega_1,\Omega_2\subset\E$ the mapping 
\begin{equation}
T_{DR}x\equiv\frac 12\left(R_{\Omega_1} R_{\Omega_2} x+x\right)\label{eq:AAR}
\end{equation}
is called the Douglas-Rachford operator.
The corresponding  Douglas-Rachford algorithm is the fixed point iteration 
$x^{k+1}\in T_{DR}x^k,$ $k\in \N$
with $x^0$ given.
\end{definition}
\noindent The Douglas-Rachford algorithm \cite{LionsMercier} owes its prominence in large part to its relation via duality to the 
alternating directions method of multipliers (ADMM) for solving constrained optimization problems \cite{Gabay83}.

We present four main results, three of which are new.  The first of these results, Theorem \ref{t:AP convergence}, 
concerns local linear convergence of alternating projections  to a solution of \eqref{eq:feasibility}.  This has been shown, with optimal rates, 
in \cite{BLPWII}.  Our proof uses fundamentally different tools developed in \cite{HesseLuke12}.  It is exactly these
newer tools that enable us to prove the second of our main results, Theorem \ref{t:convergence DR}, 
namely local linear convergence of the Douglas-Rachford algorithm.  Convergence of Douglas-Rachford, 
with rates, for sparse affine feasibility is a new result.  In the remaining two main new results, Corollary \ref{t:API} and  
Theorem \ref{t:APII}, we specify classes of affine subspaces $B$ for which alternating projections is 
{\em globally linearly convergent}. 
This shows that nonconvex models, in this case, can be a reasonable alternative to convex relaxations.

The outline of this paper is as follows.
First we recall some definitions and results from variational analysis regarding alternating projections  and Douglas-Rachford in Section 2.  
We also show in this section local linear convergence of alternating projections.    
In Section 3 we provide conditions on matrices $M$ that guarantee global linear convergence of alternating projections.  
In the same section we formulate different conditions on the matrices $M$ that guarantee global linear convergence of the same
algorithm.  
In Section 4 we show that for most problems of interest in sparse optimization there exist fixed points of Douglas-Rachford that are 
{\em not} in the intersection $\sparse\cap B$.   
On the other hand, we show that locally the iterates of Douglas-Rachford converge with linear rate to a fixed point whose {\em shadow} 
is a solution to \eqref{eq:feasibility}.    Finally in Section 5  we present numerical and analytical examples to illustrate the theoretical results.

\section{Preliminary Definitions and Results}
We use the following notation, most of which is standard.  We  denote the {\em closed} ball of radius $\delta$ 
centered on $\point$ by $\Ball_\delta(\point)$. 
We assume throughout that the matrix $M$ is full rank in the definition of the affine subspace $B$ \eqref{e:B}. 
The nullspace of $M$ is denoted $\ker M$ and  $M^\dagger$ indicates the \emph{Moore-Penrose inverse}, 
defined by
\begin{equation}\label{MoorePenrose}
M^\dagger\equiv M^\top\left(MM^\top\right)^{-1}.
\end{equation}

The inner product of 
two points $x,y\in \E$ is denoted $\langle x,~ y\rangle$.  The orthogonal 
complement to a nonempty affine set $\Omega$ is given by 
\[
  \Omega^\perp\equiv \left\{p\in \E~\left|~ \langle p,~v-w\rangle=0~\forall~v,w\in  \Omega  \right.\right\}. 
\] 
For two arbitrary sets $\Omega_1,\Omega_2\subset\E$ we denote the Minkowski sum by 
$\Omega_1+\Omega_2\equiv \{x_1+x_2~|~ x_1\in \Omega_1\ \text{and}\  x_2\in \Omega_2 \}$.  
The set of fixed points of a self-mapping $T$ is given by $\textup{Fix}~T$.  The identity mapping 
is denoted by $\Id$. 
For a set $ \Omega\subset\R^n$ we define the {\em distance} of a point $x\in \R^n$ to $\Omega$ by 
$d_{ \Omega}(x):=\inf_{y\in  \Omega}\norm{x-y}$.  When $\Omega$ is closed the distance 
is attained at a projection onto $\Omega$, that is, $\dist{x}{\Omega} = \|\point-x\|$ for $\point\in P_\Omega$.  
\subsection{Tools and notions of regularity}
Our proofs make use of some standard tools and notation from variational analysis which we briefly define here. 
We remind the reader of the definition of the projection onto a closed set \eqref{d:projection}. The following 
definition follows \cite[Definition 2.1]{BLPWIa} and is based on  \cite[Definition 1.1 and Theorem 1.6]{Mor06}.
\begin{definition}[normal cones]\label{d:normal cone}
The 
{\em proximal normal cone} $N^P_\set(\point)$ to a closed nonemtpy set $\set\subset\E$ at a point $\point\in\set$ is
defined by 
\[
N^P_\set(\point) := \textup{cone}(P^{-1}_\set(\point)-\point). \label{eq:pnormal}   
\]
The \emph{limiting normal cone}, or simply the {\em normal cone}  
$ N_\set(\point)$ is defined as the set of all vectors that can be written as the limit 
of proximal normals; that is, $\overline v\in N_\set(\point)$ if and only if there exist sequences $(x^k)_{k\in\mathbf{N}}$ in $\set$ and 
$(v^k)_{k\in \mathbf{N}}$ in $N^P_\set(x^k)$ such that $x^k\to \point$ and $v^k\to \overline v$. 
\end{definition}
\noindent The normal cone describes the local geometry of a set. What is meant by {\em regularity} of sets is made precise below. 
\begin{definition}[{$(\varepsilon,\delta)$-subregularity}]\label{epsilondeltasubregular}
$~$
A nonempty set $\set\subset\E$ is \emph{($\varepsilon,\delta$)-sub\-regular} at $\point$ with respect to $U\subset\E$, 
if there exist $\varepsilon\geq 0$ and $\delta>0$ such that
\[
\langle v,z-y\rangle\leq\varepsilon\norm{v}\norm{z-y}   
\]
holds for all $y\in   \set\cap\Ball_\delta(\point),$ $z\in U\cap   \Ball_\delta(\point),$ $v\in  N_\set(y)$. 
We simply say $\set$ is ($\varepsilon,\delta$)-subregular at $\point$ if $U=\{\point\}$.
\end{definition}

\noindent The definition of ($\varepsilon,\delta$)-subregularity was introduced in \cite{HesseLuke12} and is a 
generalization of the notion of ($\varepsilon,\delta$)-regularity introduced in \cite[Definition 8.1]{BLPWIa}.
During the preparation of this article it was brought to our attention that a similar condition appears 
in the context of regularized inverse problems \cite[Corollary 3.6]{JinLorenz}.

We define next some notions of regularity of collections of sets that, together with  ($\varepsilon,\delta$)-subregularity, 
provide sufficient conditions for linear convergence of both 
alternating projections  and Douglas-Rachford.   In the case of Douglas-Rachford, as we shall see, these conditions are also necessary.
Linear regularity, defined next, can be found in \cite[Definition 3.13]{BauBor93}.  Local versions 
of this have appeared under various names in \cite[Proposition 4]{Ioffe2000}, 
\cite[Section 3]{NgaiThera01}, and  \cite[Equation (15)]{Kruger2006}.

\begin{definition}[linear regularity]\label{localregular}~\\
 A collection of closed, nonempty sets $\left(\set_1,\set_2,\dots,\set_m\right)\subset \E$ is called \emph{locally linearly regular} at 
$\point\in\cap_{j=1}^m \set_j$ on $\Ball_\delta(\point)$ if there exists a 
$\kappa>0$ and a $\delta>0$ such that
\begin{equation}\label{eq:locallinear}
d_{\cap_{j=1}^m \set_j}(x)\leq\kappa \max_{i=1,\dots,m}d_{\set_i}(x),\quad\forall x\in  \Ball_\delta(\point). 
\end{equation}
If (\ref{eq:locallinear}) holds at $\point$ for every $\delta>0$ the collection of sets is said to be \emph{linearly regular} there.
The infimum over all $\kappa$ such that (\ref{eq:locallinear}) holds is called \emph{modulus of regularity on  $\Ball_\delta(\point)$}.  If 
the collection is linearly regular one just speaks of the {\em modulus of regularity} (without mention of $\Ball_\delta(\point)$).   
\end{definition}

\noindent There is yet a stronger notion of regularity of collections of sets that we make use of called the 
{\em basic qualification condition for sets} in \cite[Definition 3.2]{Mor06}.  For the purposes of this paper 
we refer to this as {\em strong regularity}.  
\begin{definition}[strong regularity]\label{d:strong reg}
The collection $(\Omega_1,\Omega_2)$ is \emph{strongly regular} at $\point$ if
\begin{equation}\label{eq:strongregularity}
 N_{\Omega_1}(\point)\cap -N_{\Omega_2} (\point)=\{0\}.
\end{equation}
\end{definition}

It can be shown that strong regularity implies local linear regularity (see, for instance \cite{HesseLuke12}).  
Any collection of finite dimensional affine subspaces with nonempty intersection is linearly regular 
(see for instance \cite[Proposition 5.9 and Remark 5.10]{BauBor96}).  Moreover, it is easy to see that, 
 if $\Omega_1$ and $\Omega_2$ are affine subspaces,
\begin{equation}\label{eq:AB affine}
\left(\Omega_1,\Omega_2\right)\mbox{ is strongly regular at any }\point\in\Omega_1\cap\Omega_2 
\iff   \Omega_1^\perp\cap \Omega_2^\perp = \{0\} \quad\mbox{and}\quad  \Omega_1\cap \Omega_2\neq \emptyset.
\end{equation}
In the case where $\Omega_1$ and $\Omega_2$ are affine subspaces we say that the collection is 
strongly regular without mention of any particular point in the intersection  - as long as this is nonempty - since the collection 
is strongly regular at all points in the intersection.
%
%

\subsection{General local linear convergence results}
The algorithms that we consider here are fixed-point algorithms built upon projections onto 
sets.   
%
Using tools developed in \cite{BLPWIb} and \cite{BLPWIa}, alternating projections  applied to \eqref{eq:feasibility} was 
shown in \cite{BLPWII} to be locally linearly convergent
with optimal rates in terms of the {\em Friedrichs angle} between ${\sparse}$ and $B$, 
and an estimate of the radius of convergence.   
Our approach, based on \cite{HesseLuke12},  is in line with \cite{Luke08} 
but does not rely on local firm nonexpansiveness of the fixed point mapping.  
It has the advantage of being 
general enough to be applied to {\em any} fixed point mapping, but the price one pays for 
this generality is in the rate estimates, which may not be optimal or easy to compute.  
We do not present the results of \cite{HesseLuke12} in their full generality, but focus instead on the 
essential elements for affine feasibility with sparsity constraints. 
\begin{lemma}[local linear convergence of alternating projections ]\label{convergence:AP}
(See \cite[Corollary 3.13]{HesseLuke12}.)
Let\\ the collection $(\Omega_1, \Omega_2)$ be \emph{locally linearly regular} 
at $\point\in \Omega := \Omega_1\cap \Omega_2 $  with modulus of regularity $\kappa$ on $\Ball_\delta(\point)$
and let $ \Omega_1$ and $ \Omega_2 $ be 
$(\varepsilon,\delta)-$subregular at $\point$.  
For any $x^0\in \Ball_{\delta/2}(\point)$, generate the sequence $\left(x^k\right)_{k\in\N}\subset \R^n$ by alternating projections, 
that is, $x^{k+1}\in T_{AP}x^k$.
Then
\[
 d_\Omega({x^{k+1}}) \leq \left(1-\frac 1{\kappa^2}+\varepsilon\right)d_\Omega\left( x^k\right).
\]
\end{lemma}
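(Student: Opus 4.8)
\proof
The plan is to analyze a single alternating-projections step, $x^k\mapsto y^k\in P_{\Omega_2}x^k\mapsto x^{k+1}\in P_{\Omega_1}y^k$, and to read off the contraction of $d_\Omega$ from two facts, all the points involved being kept inside $\Ball_\delta(\point)$ so that the regularity hypotheses apply. The first fact is a \emph{descent} inequality coming from the geometry of projections. Fix $z\in\Omega$ with $\norm{x^k-z}=d_\Omega(x^k)$; the confinement argument below guarantees $z\in\Ball_\delta(\point)$. Since $x^k-y^k\in N_{\Omega_2}(y^k)$ is a proximal---hence limiting---normal and $z\in\Omega\subseteq\Omega_2$, expanding $\norm{x^k-z}^2$ around $y^k$ and bounding $\langle x^k-y^k,\,z-y^k\rangle$ via the $(\varepsilon,\delta)$-subregularity of $\Omega_2$ gives
\[
 d_\Omega(y^k)^2\ \le\ \norm{y^k-z}^2\ \le\ d_\Omega(x^k)^2-\norm{x^k-y^k}^2+2\varepsilon\,\norm{x^k-y^k}\,\norm{y^k-z};
\]
the analogous step for the projection of $y^k$ onto $\Omega_1$, with $y^k-x^{k+1}\in N_{\Omega_1}(x^{k+1})$ and $z\in\Omega\subseteq\Omega_1$, gives $d_\Omega(x^{k+1})^2\le\norm{x^{k+1}-z}^2\le d_\Omega(y^k)^2-\norm{y^k-x^{k+1}}^2+2\varepsilon\,\norm{y^k-x^{k+1}}\,\norm{x^{k+1}-z}$. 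For $\varepsilon=0$ these are precisely the firm-nonexpansiveness estimates for convex projections, and since every length occurring is at most $(1+O(\varepsilon))\,d_\Omega(x^k)$, the $\varepsilon$-terms are $O\bigl(\varepsilon\,d_\Omega(x^k)^2\bigr)$.

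The second fact is local linear regularity, invoked at the point $y^k$. Because $y^k\in\Omega_2$ we have $d_{\Omega_2}(y^k)=0$, so local linear regularity yields $d_\Omega(y^k)\le\kappa\,d_{\Omega_1}(y^k)=\kappa\,\norm{y^k-x^{k+1}}$, i.e.\ $\norm{y^k-x^{k+1}}^2\ge\kappa^{-2}d_\Omega(y^k)^2$. Feeding this into the second descent estimate and chaining with the first yields
\[
 d_\Omega(x^{k+1})^2\ \le\ \bigl(1-\kappa^{-2}\bigr)\,d_\Omega(x^k)^2+O\bigl(\varepsilon\,d_\Omega(x^k)^2\bigr),
\]
hence $d_\Omega(x^{k+1})\le\bigl(\sqrt{1-\kappa^{-2}}+O(\varepsilon)\bigr)d_\Omega(x^k)$. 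Tightening the factor to the stated $1-\kappa^{-2}+\varepsilon$ is the content of the sharper accounting in \cite[Corollary 3.13]{HesseLuke12}, which retains the term $\norm{x^k-y^k}^2$ discarded above and applies local linear regularity a second time; in any case the per-step factor is $<1$ provided $\delta$, and hence $\varepsilon$, is small enough, which is all that is used below.

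The step I expect to be the real obstacle is the confinement that legitimises every invocation of $(\varepsilon,\delta)$-subregularity and of linear regularity, since the contraction estimate and the confinement are mutually dependent. One proceeds by induction on $k$, maintaining simultaneously that (i)~$x^k$ lies in a ball about $\point$ small enough that both $y^k=P_{\Omega_2}x^k$ and $x^{k+1}=P_{\Omega_1}y^k$ lie in $\Ball_\delta(\point)$ --- this is what forces the hypothesis $x^0\in\Ball_{\delta/2}(\point)$, via $\norm{y^k-\point}\le\norm{y^k-x^k}+\norm{x^k-\point}\le 2\norm{x^k-\point}$ and likewise for $x^{k+1}$ --- and that (ii)~the one-step bound holds for every $j<k$. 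From (ii) the quantity $d_\Omega(x^j)$ decays geometrically; since also $\norm{x^{j+1}-x^j}\le\norm{x^{j+1}-y^j}+\norm{y^j-x^j}=d_{\Omega_1}(y^j)+d_{\Omega_2}(x^j)$, which is bounded by a fixed multiple of $d_\Omega(x^j)$, a geometric-series estimate combined with the triangle inequality keeps $x^k$ and its successors inside $\Ball_\delta(\point)$, re-establishing (i) and closing the induction. What remains is the routine but fiddly bookkeeping that, using $2ab\le a^2+b^2$ and the comparability of all lengths to $d_\Omega(x^k)$, funnels the $\varepsilon$-error terms into the single additive $\varepsilon$ of the statement (shrinking $\delta$ if necessary).
\endproof
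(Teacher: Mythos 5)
First, a point of reference: the paper does not actually prove this lemma --- it is imported verbatim from \cite[Corollary 3.13]{HesseLuke12} --- so there is no in-paper argument to compare yours against, and your attempt has to stand on its own. Your outline does follow the natural (and, as far as strategy goes, the correct) route: a descent estimate for each projection obtained by expanding $\norm{x^k-z}^2$ and controlling the cross term $\langle x^k-y^k,\,z-y^k\rangle$ via $(\varepsilon,\delta)$-subregularity, followed by local linear regularity to convert a step length into a multiple of $d_\Omega$.

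Three things, however, keep this from being a proof of the statement as written. (1) The claimed inequality is never established: you arrive at $d_\Omega(x^{k+1})\le\bigl(\sqrt{1-\kappa^{-2}}+O(\varepsilon)\bigr)d_\Omega(x^k)$, which is strictly weaker than the stated $1-\kappa^{-2}+\varepsilon$ (since $\sqrt{1-t}\ge 1-t$ on $[0,1]$), and you defer the tightening to the very corollary being proved, which is circular. The repair you name --- retain the discarded term $\norm{x^k-y^k}^2$ and apply linear regularity a second time at $x^k$, using $d_{\Omega_1}(x^k)=0$ --- is the right one and would give $(1-\kappa^{-2})^2$ on squared distances, hence the stated rate; but it must actually be carried out, and it only applies for $k\ge1$ since $x^0$ need not lie in $\Omega_1$. (2) Your descent estimates invoke subregularity with $z$ equal to the nearest point of $\Omega$ to $x^k$, whereas the hypothesis, per Definition \ref{epsilondeltasubregular}, is subregularity at $\point$ with $U=\{\point\}$, so the inequality is only available for $z=\point$. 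You either need the stronger hypothesis (subregularity with respect to a set $U$ containing $\Omega$ near $\point$) or must run the estimates relative to $\point$ itself. (3) The confinement induction is asserted, not proved, and the bounds you give do not close it: the triangle inequality only yields $\norm{x^{k+1}-\point}\le 4\norm{x^k-\point}$, so a single full step starting in $\Ball_{\delta/2}(\point)$ can already leave $\Ball_\delta(\point)$, and the geometric-series patch requires the contraction factor and the constant $C$ in $\norm{x^{j+1}-x^j}\le C\,d_\Omega(x^j)$ to conspire so that the accumulated drift stays below $\delta/2$, which is not automatic. The clean route is the one this paper uses downstream (Lemma \ref{t:stay}): show directly that each projector maps the relevant ball into itself, which for $(\varepsilon,\delta)$-subregular sets follows from near-nonexpansiveness of $P_{\Omega_i}$ relative to $\point$ rather than from the triangle inequality.
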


\noindent 
In the analogous statement for the Douglas-Rachford algorithm, we defer, for the sake of simplicity, 
characterization of the constant in the asserted linear convergence rate.  A more refined analysis of 
such rate constants and their geometric interpretation is the subject of future research.   
\begin{lemma}[local linear convergence of Douglas-Rachford]\label{convergence:AAR}(See \cite[Corollary 3.20]{HesseLuke12}.)
 Let  $\Omega_1,\Omega_2$ be two affine subspaces with $\Omega_1\cap \Omega_2\neq \emptyset$. 
The Douglas-Rachford algorithm converges to $\Omega_1\cap \Omega_2$ for all $x^0\in \E$  
if and only if the collection $(\Omega_1, \Omega_2)$ is strongly regular, in which case, convergence is linear.
\end{lemma}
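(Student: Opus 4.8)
\emph{Proof outline (proposal).} The idea is that, when $\Omega_1$ and $\Omega_2$ are affine, $T_{DR}$ is itself an affine map, so the whole statement reduces to a spectral analysis of one linear operator. First I would linearize: fix any $\point\in\Omega_1\cap\Omega_2$, set $L_i:=\Omega_i-\point$ (linear subspaces containing $0$), and note that $P_{\Omega_i}(\point+y)=\point+P_{L_i}y$, hence $R_{\Omega_i}(\point+y)=\point+R_{L_i}y$, whence from \eqref{eq:AAR} one gets $T_{DR}(\point+y)=\point+\widetilde T y$ with $\widetilde T:=\tfrac12(R_{L_1}R_{L_2}+\Id)$ \emph{linear}. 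Thus the Douglas--Rachford iterates from $x^0$ are exactly $x^k=\point+\widetilde T^{\,k}(x^0-\point)$, and the claim becomes: $\widetilde T^{\,k}z$ converges linearly to a point of $L_1\cap L_2$ for every $z\in\E$ if and only if $L_1^\perp\cap L_2^\perp=\{0\}$; and by \eqref{eq:AB affine} this last condition is exactly strong regularity of $(\Omega_1,\Omega_2)$.

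Next I would study the spectrum of $\widetilde T$. Writing $Q:=R_{L_1}R_{L_2}$, each $R_{L_i}=2P_{L_i}-\Id$ is a symmetric orthogonal involution, so $Q$ is orthogonal, hence normal, hence so is $\widetilde T=\tfrac12(Q+\Id)$; its eigenvalues are $\tfrac{1+\mu}{2}$ as $\mu$ runs over $\mathrm{spec}(Q)\subset\{|\mu|=1\}$. Since $|\tfrac{1+\mu}{2}|\le1$ on the unit circle with equality only at $\mu=1$, the spectrum of $\widetilde T$ sits in the closed unit disc and touches the unit circle only at the eigenvalue $1$, which is semisimple because $\widetilde T$ is diagonalizable. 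Splitting $\E$ orthogonally into $\ker(\widetilde T-\Id)$ and its complement --- both $\widetilde T$-invariant by normality --- the restriction of $\widetilde T$ to the complement has spectral radius $\rho:=\max\{\,|\tfrac{1+\mu}{2}|:\mu\in\mathrm{spec}(Q)\setminus\{1\}\,\}<1$ (a finite set in finite dimensions). Hence $\widetilde T^{\,k}\to P$, the orthogonal projection onto $\ker(\widetilde T-\Id)=\Fix Q$, with $\|\widetilde T^{\,k}-P\|\le\rho^{\,k}$: whenever $\widetilde T^{\,k}z$ converges it does so linearly, and the only remaining issue is whether $Pz$ lands in $L_1\cap L_2$.

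To resolve that, I would identify the fixed-point set by a short direct computation: splitting $z\in\Fix Q$ along the orthogonal decompositions $\E=L_i\oplus L_i^\perp$ and using that $R_{L_1}$ is an involution yields $\Fix Q=(L_1\cap L_2)\oplus(L_1^\perp\cap L_2^\perp)$, an orthogonal direct sum. If $(\Omega_1,\Omega_2)$ is strongly regular, then $L_1^\perp\cap L_2^\perp=\{0\}$ by \eqref{eq:AB affine}, so $\Fix Q=L_1\cap L_2$ and $Pz\in L_1\cap L_2$ for all $z$; translating back by $\point$, every Douglas--Rachford sequence converges linearly to a point of $\Omega_1\cap\Omega_2$. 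If instead the collection is not strongly regular, then since $\Omega_1\cap\Omega_2\ne\emptyset$, \eqref{eq:AB affine} forces $L_1^\perp\cap L_2^\perp\ne\{0\}$; picking $0\ne v$ there, $\point+v\in\Fix T_{DR}$ while $v\notin L_1\cap L_2$ (otherwise $\langle v,v\rangle=0$), so the constant Douglas--Rachford sequence started at $x^0=\point+v$ does not converge to $\Omega_1\cap\Omega_2$. This settles the ``only if'' direction.

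I expect the delicate part to be the spectral bookkeeping in the second step: confirming that $1$ is the \emph{only} unimodular eigenvalue of $\widetilde T$ and that it is semisimple, so that the iteration neither oscillates nor grows. Both follow from orthogonality of $Q$, but the argument rests on the elementary observation that $|\tfrac{1+\mu}{2}|<1$ whenever $|\mu|=1$ and $\mu\ne1$ (so that, e.g., a $(-1)$-eigenvalue of $Q$ is completely harmless), combined with the identity $\Fix Q=(L_1\cap L_2)\oplus(L_1^\perp\cap L_2^\perp)$, which is precisely what makes strong regularity the threshold for the limit to land in the intersection rather than acquiring a nontrivial component in the ``ghost'' subspace $L_1^\perp\cap L_2^\perp$.
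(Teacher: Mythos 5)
Your argument is correct and complete, but it is worth noting that the paper does not actually prove this lemma at all: it is imported verbatim from \cite[Corollary 3.20]{HesseLuke12}, where the proof runs through regularity/nonexpansiveness estimates rather than spectral theory. Your route is genuinely different and self-contained: after translating to linear subspaces, you observe that $Q=R_{L_1}R_{L_2}$ is orthogonal, hence $\widetilde T=\tfrac12(Q+\Id)$ is normal with spectrum $\{\tfrac{1+\mu}{2}:\mu\in\mathrm{spec}(Q)\}$ touching the unit circle only at $1$, so $\widetilde T^{\,k}$ converges linearly to the orthogonal projection onto $\Fix Q=(L_1\cap L_2)\oplus(L_1^\perp\cap L_2^\perp)$; strong regularity, via \eqref{eq:AB affine}, is exactly the condition that the second summand vanishes, and a nonzero $v$ in that summand gives a stationary counterexample for the ``only if'' direction. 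All the individual steps check out (the identity $\Fix Q=\{z:P_{L_1}z=P_{L_2}z\}=(L_1\cap L_2)\oplus(L_1^\perp\cap L_2^\perp)$ agrees with the paper's quoted formula from \cite[Corollary 3.9]{BauComLuke}, and normality makes the eigenvalue $1$ semisimple and gives $\|\widetilde T^{\,k}-P\|=\rho^{\,k}$). What your approach buys is considerable: since you prove convergence of $\widetilde T^{\,k}$ to $P_{\Fix Q}$ \emph{without} assuming strong regularity, you obtain in one stroke the paper's later Theorem \ref{thm:convDR}, making the enlargement construction $\widetilde{\A}=\A+(\A^\perp\cap\B^\perp)$ and Lemmas \ref{t:Sarajevo}--\ref{thm:refl} unnecessary for that purpose; moreover your rate constant $\rho=\max\{|\tfrac{1+\mu}{2}|:\mu\in\mathrm{spec}(Q)\setminus\{1\}\}$ is explicit and is precisely the cosine of the Friedrichs angle identified in \cite{BBPW}. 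The trade-off is that the spectral argument is tied to the affine/linear setting, whereas the cited machinery of \cite{HesseLuke12} extends to nonconvex sets, which is what the rest of the paper needs.
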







\section{Sparse Feasibility with an Affine Constraint: local and global convergence of alternating projections}
We are now ready to apply the above general results to affine sparse feasibility.  We begin with 
characterization of the regularity of the sets involved.

\subsection{Regularity of sparse sets}
We specialize to the case where $\B$ is an affine subspace defined by \eqref{e:B} 
and $\sparse$ defined by \eqref{e:A} is the set of vectors with at most $s$ nonzero elements.
Following \cite{BLPWII} we decompose the set $\sparse$ into a union of subspaces. 
For $a\in\R^n$ define the \emph{sparsity subspace} associated with $a$ by
\begin{equation}\label{e:supp}
    \supp(a):=\left\{ x\in\R^n\middle|~x_j=0~\textup{if } a_j=0\right\},
\end{equation}
and the mapping
\begin{equation}\label{eq:integermapping}
I:\R^n\to\{1,\dots,n\},\quad
x\mapsto\left\{i\in\{1,\dots,n\}\middle|~x_i\neq0\right\}.
\end{equation}
Define
$ \mathcal J:=2^{\left\{1,2,\dots,n\right\}}~ \textup{and }~ \mathcal J_s:=\left\{ J\in \mathcal J\middle |~ J \mbox { has } s\mbox{ elements}\right\}$.
The set $\sparse$ can be written as the union of all subspaces indexed by $J\in \mathcal J_s$ \cite[Equation (27d)]{BLPWII}, 
\begin{equation}\label{sparsedecomp}
 \sparse=\bigcup_{J\in \mathcal J_s} A_J,
\end{equation}
where $A_J:=\textup{span}\left\{e_i\middle|~i\in J\right\}$ and $e_i$ is the $i-$th standard unit vector in $\R^n$. 
For $x\in\R^n$ we define the set of $s$ largest coordinates in absolute value
\begin{equation}\label{e:Cs}
C_s(x):=\left\{J\in \mathcal J_s \middle|~ \min_{i\in J}|x_i|\geq \max_{i\notin J} |x_i|\right\}.
\end{equation}
The next elementary result will be useful later. 
\begin{lemma}\label{t:dist AJ}(See \cite[Lemma 3.4]{BLPWII})~\\
Let $a\in \sparse$ and assume $s\leq n-1$. Then
\begin{equation}\label{eq:distance2A}
\min \left\{ d_{A_J}(a)~\middle|~ a\notin A_J,~J\in \mathcal J_s\right\}=\min \left\{|a_j|~\middle|~j\in I(a)\right\}.
\end{equation}
\end{lemma}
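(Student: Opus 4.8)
The plan is to reduce everything to the explicit description of the projection onto a coordinate subspace. First I would observe that for any $J\in\mathcal J_s$ the projection $P_{A_J}(a)$ is obtained by zeroing out the entries of $a$ indexed by the complement $J^c:=\{1,\dots,n\}\setminus J$, so that $d_{A_J}(a)^2=\sum_{i\in J^c}a_i^2$. In particular $a\in A_J$ exactly when $a_i=0$ for every $i\in J^c$, i.e.\ when $I(a)\subseteq J$; equivalently, the side condition $a\notin A_J$ appearing on the left-hand side of \eqref{eq:distance2A} says precisely that $I(a)\cap J^c\neq\emptyset$.

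Next I would prove the inequality ``$\geq$''. For any admissible $J$ (that is, $J\in\mathcal J_s$ with $a\notin A_J$) pick an index $j\in I(a)\cap J^c$; then $d_{A_J}(a)^2=\sum_{i\in J^c}a_i^2\geq a_j^2\geq\min\{a_i^2\mid i\in I(a)\}$. Taking the minimum over all admissible $J$ (the index set $\mathcal J_s$ being finite) and then square roots, everything being nonnegative, yields ``$\geq$'' in \eqref{eq:distance2A}.

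For the reverse inequality I would exhibit an admissible $J$ attaining the bound. Let $j^\ast$ achieve $\min\{|a_i|\mid i\in I(a)\}$, assuming $a\neq0$; the case $a=0$ is trivial, since then $I(a)=\emptyset$ and both sides are the minimum over the empty index set. The set $\{1,\dots,n\}\setminus I(a)$ has $n-|I(a)|\geq n-s$ elements because $a\in\sparse$, and the hypothesis $s\leq n-1$ gives $n-s-1\geq 0$; hence one may select $Z\subseteq\{1,\dots,n\}\setminus I(a)$ with exactly $n-s-1$ elements. Setting $J^c:=Z\cup\{j^\ast\}$, of cardinality $n-s$, and $J:=\{1,\dots,n\}\setminus J^c\in\mathcal J_s$, we have $j^\ast\in I(a)\cap J^c$, so $a\notin A_J$, while $a_i=0$ for all $i\in Z$, so $d_{A_J}(a)^2=\sum_{i\in J^c}a_i^2=a_{j^\ast}^2$. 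This establishes ``$\leq$'', and combining the two inequalities proves \eqref{eq:distance2A}.

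The only step requiring any care is the counting in the third paragraph: one must check that $s\leq n-1$, together with $|I(a)|\leq s$ (which is simply the assumption $a\in\sparse$), leaves enough coordinates outside $I(a)$ to complete $\{j^\ast\}$ to a complement of size exactly $n-s$. Everything else is elementary bookkeeping with index sets and the closed form for the projection onto $A_J$.
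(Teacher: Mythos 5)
Your proof is correct. Note that the paper gives no proof of this lemma at all---it simply defers to \cite[Lemma 3.4]{BLPWII}---so there is no in-paper argument to compare against; your computation $d_{A_J}(a)^2=\sum_{i\notin J}a_i^2$ followed by the two-sided estimate is the natural direct route. The one step that genuinely needs care is exactly the one you flag: the hypothesis $s\leq n-1$ guarantees $n-s-1\geq 0$, so the complement $J^c$ can absorb $j^\ast$ together with $n-s-1$ indices outside $I(a)$ while keeping $|J|=s$, and the degenerate case $a=0$ is handled consistently by the convention that a minimum over an empty index set is $+\infty$ on both sides.
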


Using the above notation, the normal cone to the sparsity set $\sparse$ at $a\in \sparse$  has the following closed-form 
representation (see \cite[Theorem 3.9]{BLPWII} and \cite[Proposition 3.6]{Luke12} for the general matrix representation).
\begin{eqnarray}
N_{\sparse}(a)&=&\left\{ \nu\in\R^n\middle|~\norm{\nu}_0\leq n-s\right\}\cap\left(\textup{supp}(a)\right)^\perp\nonumber\\
&=&\bigcup_{J\in\mathcal J_s, I(a)\subseteq J} A_J^\perp.\label{eq:sparseNcone}
\end{eqnarray}
The normal cone to the affine set $B$ also has a simple closed form, namely $N_B(x)=B^\perp$ (see for example \cite[Proposition 1.5]{Mor06}).  Let $y\in \E$ be a point such that $My = p$. Note
that $\ker M$ is the subspace parallel to $B$, i.e. $\ker M = B+\{-y\}$. 

This notation yields the following explicit representations for the projectors onto ${\sparse}$ 
\cite[Proposition 3.6]{BLPWII} and $B$:
\begin{eqnarray}\label{eq:P_A}
&& P_B x:=x-M^\dagger(Mx-p)\quad \mbox{ and }\quad P_{\sparse} (x)\equiv \bigcup_{J\in C_s(x)} P_{A_J}x,
\end{eqnarray}
 where $M^\dagger$ is given by \eqref{MoorePenrose} and
\begin{eqnarray} (P_{A_J}x)_i=\left\{\begin{array}{cc}
x_i, & i\in J,\\~ 0, & i\notin J    
\end{array}\right. .
\end{eqnarray}

%
We collect next some facts about the projectors and reflectors of $\sparse$ and $B$.  
We remind the reader that, 
in a slight abuse of notation, since the set $B$ is convex, we make no distinction between
the projector $P_B(x)$ and the projection $\point\in P_B(x)$.  
\begin{lemma}\label{t:stay}
Let ${\sparse}$ and $B$ be defined by (\ref{e:A}) and (\ref{e:B}).  Let $a\in \sparse$ and $b\in B$.   
For any  $\delta_a\in(0,\min\left\{ |a_j|~\middle| ~j\in I(a)\right\})$ and 
$\delta_b\in(0,\infty)$
the following hold:
\begin{enumerate}[(i)]
   \item\label{t:stay1} $P_B(x)\in\Ball_{\delta_b}(b)$ for all $x\in \Ball_{\delta_b}(b)$;
\item\label{t:stay2} $P_{\sparse}(x)\subset \Ball_{\delta_a/2}(a)$ for all $x\in \Ball_{\delta_a/2}(a)$;
\item\label{t:stay3} $R_B(x)\in\Ball_{\delta_b}(b)$ for all $x\in \Ball_{\delta_b}(b)$;
\item\label{t:stay4} $R_{\sparse}(x)\subset\Ball_{\delta_a/2}(a)$  for all $x\in \Ball_{\delta_a/2}(a)$.
\end{enumerate}
\end{lemma}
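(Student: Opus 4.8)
The plan is to split the four assertions into the pair concerning the affine subspace $B$, which is convex, and the pair concerning the nonconvex set $\sparse$, and in each case to reduce matters to elementary facts about projectors and reflectors onto subspaces. The organizing principle for the $\sparse$-statements will be this: near a point $a$ all of whose nonzero coordinates exceed $\delta_a$ in modulus, the active index set $J$ of any projection $P_{A_J}x$ with $x\in\Ball_{\delta_a/2}(a)$ must contain the support $I(a)$ of $a$, so that $a$ itself lies in the subspace $A_J$, whence $P_{A_J}$ and $R_{A_J}$ behave toward $a$ exactly like projector/reflector onto a subspace containing the reference point.

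For (i) and (iii) I would simply use that $B$ is an affine subspace. Then $P_B$ is single-valued, nonexpansive, and fixes every $b\in B$, so $\norm{P_B(x)-b}=\norm{P_B(x)-P_B(b)}\le\norm{x-b}\le\delta_b$, which is (i). For (iii), write $B=b+\ker M$; then $R_B(x)=b+R_{\ker M}(x-b)$ with $R_{\ker M}=2P_{\ker M}-\Id$ a linear isometry, so $R_B$ is an affine isometry that fixes $b$ and $\norm{R_B(x)-b}=\norm{R_B(x)-R_B(b)}=\norm{x-b}\le\delta_b$.

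For (ii) and (iv) I would first establish the combinatorial claim: if $x\in\Ball_{\delta_a/2}(a)$, then $I(a)\subseteq J$ for every $J\in C_s(x)$, and hence $a\in A_J$. Indeed, for $j\in I(a)$ the assumption $\delta_a<|a_j|$ gives $|x_j|\ge|a_j|-\norm{x-a}>\delta_a-\delta_a/2=\delta_a/2$, while for $j\notin I(a)$ we have $a_j=0$ and so $|x_j|\le\norm{x-a}\le\delta_a/2$; thus every coordinate of $x$ indexed by $I(a)$ strictly exceeds every coordinate not in $I(a)$. If some $J\in C_s(x)$ omitted an index $j^\ast\in I(a)$, then the defining inequality of $C_s(x)$ would force $|x_i|\ge|x_{j^\ast}|>\delta_a/2$ for all $i\in J$, hence $J\subseteq I(a)$ and $|I(a)|\ge|J|+1=s+1$, contradicting $a\in\sparse$. (Alternatively, this follows from Lemma~\ref{t:dist AJ}: $d_{A_J}(a)>\delta_a$ whenever $a\notin A_J$, so no point within $\delta_a/2$ of $a$ can have a projection in such an $A_J$.) Given $a\in A_J$, part (ii) follows from Pythagoras for the orthogonal projection onto the subspace $A_J$:
\[
\norm{P_{A_J}x-a}^2=\norm{x-a}^2-\norm{x-P_{A_J}x}^2\le\norm{x-a}^2\le(\delta_a/2)^2,
\]
and part (iv) follows because $R_{A_J}=2P_{A_J}-\Id$ is a linear isometry of $\R^n$ that fixes $a$, so $\norm{R_{A_J}x-a}=\norm{x-a}\le\delta_a/2$. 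Taking unions over $J\in C_s(x)$ and using $P_{\sparse}(x)=\bigcup_{J\in C_s(x)}P_{A_J}x$ and $R_{\sparse}(x)=\bigcup_{J\in C_s(x)}R_{A_J}x$ completes both.

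The only genuinely non-routine step is the combinatorial claim $I(a)\subseteq J$ for $J\in C_s(x)$, and within it the subtle point will be the case $|I(a)|<s$: I must check that the surplus indices selected by $C_s(x)$ necessarily lie outside $I(a)$, where $x$ is small, so that they neither break the inclusion $a\in A_J$ nor contribute to $\norm{P_{A_J}x-a}$. Everything else is nonexpansiveness or isometry of linear maps composed with the observation that the relevant subspaces contain the reference point.
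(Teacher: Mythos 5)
Your proposal is correct and follows essentially the same route as the paper: nonexpansiveness of $P_B$ and the isometry of $R_B$ about $b$ for (i) and (iii), and for (ii) and (iv) the observation that the coordinates of $x$ on $I(a)$ strictly dominate those off $I(a)$, so every selected index set contains $I(a)$ and the projection/reflection is taken with respect to a subspace containing $a$, after which Pythagoras and the isometry of subspace reflection finish the argument. Your explicit contradiction argument for $I(a)\subseteq J$ when $|I(a)|<s$ is a slightly more careful rendering of the step the paper states in words.
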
 
\begin{proof}
\eqref{t:stay1}.  This follows from the fact that the projector is nonexpansive, since $B$ is convex 
and $\norm{P_Bx-b}=\norm{P_Bx-P_Bb}\leq\norm{x-b}$. (In fact, the projector is firmly nonexpansive
as shown, for example, in \cite[Lemma 1.2]{Zarantonello}.)

\eqref{t:stay2}.  
Let  $x\in \Ball_{\delta_a/2}(a)$. For any $i\in I^\circ(a):=\{i:a_i=0 \}$, we have 
$|x_i-a_i| =  |x_i|\leq \delta_a/2$.  Moreover, for all  
$j\in I(a):=\{j:a_j\neq 0 \}$, we have $|x_j - a_j|\leq \delta_a/2$ and so
$|x_j| >\delta_a/2$ for all $j\in I(a)$.  
Altogether this means that $|x_j|> |x_i|$ for all $i\in I^\circ(a), j\in I(a)$.
Therefore the indices of the nonzero elements of $a$
correspond exactly to the indices of the $|I(a)|$-largest elements of $x$, where $|I(a)|$
denotes the cardinality of the set $I(a)$.  Since $|I(a)|\leq s$, the projector of $x$ need not 
be single-valued. (Consider the case $a=(1,0,\dots,0)$ and $x=(1, \delta/4,\delta/4, 0,\dots,0)$ and $s=2$.)  
Nevertheless, for all $x^+\in P_{\sparse}(x)$ we have $a\in\supp(x^+)$ where $\supp(x^+)$ is defined by \eqref{e:supp}. 
Since $\supp(x^+)$ is a subspace, $x^+$ is the orthogonal projection of $x$ onto a subspace, hence by 
Pythagoras' Theorem
\begin{equation} 
\begin{array}{rrl}
&{\norm{x-x^+}}_2^2+  \norm{ x^+-a}_2^2 &= {\norm{x-a}}_2^2 \\
\mbox{ and } & \norm{ x^+-a}_2\leq {\norm{x-a}}_2 & \leq\frac{\delta}{2}.
\end{array}
\end{equation}
Thus $P_{\sparse} x\subset \Ball_{\delta_a/2}(a)$.

\eqref{t:stay3}.  Since the reflector $R_B$ is with respect to an 
affine subspace containing $b$ a simple geometric 
argument shows that for all $x$ we have $\|R_Bx-b\|=\|x-b\|$.  The result follows immediately. 

\eqref{t:stay4}.  As in the proof of \eqref{t:stay2}, for all $x\in \Ball_{\delta_a/2}$ we have 
$a\in\supp(x^+)$ for each $x^+\in P_{\sparse}(x)$.  In other words, the projector, and hence the 
corresponding reflector, is with respect to a subspace containing $a$.  
Thus, as in \eqref{t:stay3}, $\|R_{\sparse}x - a\|=\|x-a\|$, though in this case only for $x\in\Ball_{\delta_a/2}$. 
\end{proof}

The next lemma shows that around any point $\point\in\sparse$ the set $\sparse$ is the union of 
subspaces in $\sparse$ containing $\point$.  Hence around any point $\point\in\sparse\cap B$ 
the intersection $\sparse\cap B$ can be described locally
as the intersection of subspaces and the affine set $B$, each containing $\point$.   
\begin{lemma}\label{t:ugh}
Let $\point\in\sparse\cap B$ with $0<\|\point\|_0\leq s$.  Then for all $\delta<\min\{|\point_i|: \point_i\neq 0\}$
we have
\begin{equation}\label{e:roof}
\sparse\cap \Ball_{\delta}(\point)=\underset{J\in\mathcal{J}_s,~
                                                I(\point)\subseteq J}{\bigcup}A_J\cap \Ball_{\delta}(\point)
\end{equation}
and hence    
\begin{equation}\label{e:red}
\sparse\cap B\cap \Ball_{\delta}(\point)=\underset{J\in\mathcal{J}_s,~
                                                I(\point)\subseteq J}{\bigcup}A_J\cap B\cap \Ball_{\delta}(\point)
\end{equation}
If in fact $\|\point\|_0=s$, then there is a unique
$J\in \mathcal{J}_s$ such that for all $\delta<\min\{|\point_i|: \point_i\neq 0\}$
we have $\sparse\cap \Ball_{\delta}(\point)=A_J\cap \Ball_{\delta}(\point)$ and hence 
$\sparse\cap B\cap\Ball_{\delta}(\point)=A_J\cap B\cap\Ball_{\delta}(\point)$.  
\end{lemma}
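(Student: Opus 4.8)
The plan is to establish the set identity \eqref{e:roof} by double inclusion, then obtain \eqref{e:red} from it by intersecting with $B$ and distributing, and finally read off the singleton case from the cardinality hypothesis $\|\point\|_0=s$. Throughout I fix $\delta<\min\{|\point_i|\mid\point_i\neq0\}$ as in the statement.

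For the inclusion ``$\supseteq$'' in \eqref{e:roof} I would simply note that whenever $J\in\mathcal J_s$ satisfies $I(\point)\subseteq J$, the subspace $A_J=\textup{span}\{e_i\mid i\in J\}$ consists entirely of vectors with at most $|J|=s$ nonzero coordinates, so $A_J\subseteq\sparse$ by \eqref{e:A}. Intersecting with $\Ball_\delta(\point)$ and taking the union over all such $J$ yields this inclusion immediately.

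The content is the reverse inclusion ``$\subseteq$''. Given $x\in\sparse\cap\Ball_\delta(\point)$, the key step is to show $I(\point)\subseteq I(x)$: for $i\in I(\point)$ we have $|x_i-\point_i|\le\|x-\point\|\le\delta<\min\{|\point_j|\mid\point_j\neq0\}\le|\point_i|$, so $|x_i|\ge|\point_i|-|x_i-\point_i|>0$, i.e.\ $i\in I(x)$. Since $x\in\sparse$ we have $|I(x)|\le s$, so we may choose some $J\in\mathcal J_s$ with $I(x)\subseteq J$; then $x_i=0$ for every $i\notin J$, hence $x\in A_J$, while $I(\point)\subseteq I(x)\subseteq J$ places $x$ in the right-hand side of \eqref{e:roof}. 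This proves \eqref{e:roof}.

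Identity \eqref{e:red} then follows by intersecting both sides of \eqref{e:roof} with $B$ and distributing the intersection over the finite union. For the last assertion, if $\|\point\|_0=s$ then $|I(\point)|=s$, and the only $J\in\mathcal J_s$ containing a set of size $s$ is that set itself, so the union in \eqref{e:roof} reduces to the single term $A_{I(\point)}$, and likewise in \eqref{e:red}. The only point requiring any care is the chain of inequalities giving $I(\point)\subseteq I(x)$, and in particular that it is valid on the \emph{closed} ball because $\delta$ is taken strictly below $\min\{|\point_i|\mid\point_i\neq0\}$; everything else is routine, so the proof is short.
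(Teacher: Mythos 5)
Your proof is correct and follows essentially the same route as the paper: the heart of both arguments is that any $x\in\sparse\cap\Ball_\delta(\point)$ must satisfy $I(\point)\subseteq I(x)$, after which one extends $I(x)$ to an index set in $\mathcal J_s$ and reads off the singleton case from $|I(\point)|=s$. The only difference is cosmetic: you derive the support containment directly from the triangle inequality on coordinates, whereas the paper obtains it by contraposition from the distance formula of Lemma \ref{t:dist AJ} (which forces it to treat $s=n$ separately); your version is, if anything, slightly more self-contained.
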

\begin{proof}
If $s=n$, then the set $\sparse$ is all of $\R^n$ and both statements are trivial.  
For the case $s\leq n-1$,  
choose any  $x\in \Ball_\delta(\point)\cap \sparse$.  From the definition of $\delta$ and Lemma \ref{t:dist AJ}
we have that, for any  $J\in \mathcal J_s$, if $\point\notin A_J$ then $x\notin A_J$.  By contraposition, therefore, 
$x\in A_J$ implies that $\point \in A_J$, hence, for each $x\in   \Ball_\delta(\point)\cap \sparse$, we have 
$x\in   \Ball_\delta(\point)\cap A_{I(x)}$ where 
$I(\point)\subseteq I(x)\in \mathcal{J}_s$.  
The intersection $ \Ball_\delta(\point)\cap \sparse$ is then the 
union over all such intersections as given by \eqref{e:roof}.  Equation \eqref{e:red} is an immediate consequence of 
\eqref{e:roof}. 

If, in addition $\|\point\|_0=s$, then the cardinality of $I(\point)$ is $s$ and by \cite[Lemma 3.5]{BLPWII} 
$C_s(\point) = \{I(\point)\}$, 
where  $C_s(\point)$ is given by \eqref{e:Cs}.
This means that if $\point$ has sparsity $s$, then there is exactly one subspace $A_J$ with index set $J := I(\point)$ in $\mathcal{J}_s$ 
containing $\point$.
By Lemma \ref{t:dist AJ}, $\dist{\point}{\sparse\setminus A_J}= \min \left\{|\point_j|~\middle|~j\in J\right\} >\delta$.
From this we conclude the equality  $\sparse\cap\Ball_{\delta}(\point)=A_J\cap\Ball_{\delta}(\point)$ and hence 
$\sparse\cap B\cap\Ball_{\delta}(\point)=A_J\cap B\cap\Ball_{\delta}(\point)$, as claimed.
\end{proof}

We conclude this introductory section with a characterization of the sparsity set $\sparse$. 
\begin{thm}[regularity of $\sparse$]\label{t:Asubregular}
 At any point $\point\in \sparse\backslash\{0\}$ the set $\sparse$ is $(0,\delta)$-subregular at $\point$ for 
$\delta\in (0,\min\left\{ |\point_j|~\middle| ~j\in I(\point)\right\}).$
On the other hand, the set $\sparse$ is not $(0,\delta)$-subregular at $\point\in \sparse\backslash\{0\}$ for any 
$\delta\geq\min\left\{ |\point_j|~\middle| ~j\in I(\point)\right\})$. 
In contrast, at $0$ the set $\sparse$ is $(0,\infty)$-subregular.
 \end{thm}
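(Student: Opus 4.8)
The plan is to split the statement into three parts and handle each by reducing $(0,\delta)$-subregularity at $\point$ to a statement about a single subspace $A_J$ (or a finite union of them) via Lemma \ref{t:ugh}. Recall that $(\varepsilon,\delta)$-subregularity at $\point$ with $U=\{\point\}$ requires $\langle v, \point - y\rangle \le \varepsilon\norm{v}\norm{\point-y}$ for all $y\in\sparse\cap\Ball_\delta(\point)$ and all $v\in N_\sparse(y)$. The key structural fact I would use is that subspaces are $(0,\delta)$-subregular everywhere: if $A$ is a subspace, $y\in A$ and $v\in N_A(y)=A^\perp$, then $\langle v,\point-y\rangle = 0$ whenever $\point\in A$ as well, since $\point-y\in A$. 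So the whole argument turns on showing that, in a small enough ball, every $y\in\sparse$ and every normal $v\in N_\sparse(y)$ ``live together'' in a common subspace $A_J$ that also contains $\point$.

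For the positive assertion, fix $\point\in\sparse\setminus\{0\}$ and $\delta<\min\{|\point_j|: j\in I(\point)\}$. Take $y\in\sparse\cap\Ball_\delta(\point)$. By Lemma \ref{t:ugh} (equation \eqref{e:roof}), $y\in A_{I(y)}$ with $I(\point)\subseteq I(y)\in\mathcal J_s$, so in particular $\point\in A_{I(y)}$ too. Now I need that $N_\sparse(y)\subseteq A_{I(y)}^\perp$: from the closed-form formula \eqref{eq:sparseNcone}, $N_\sparse(y)=\bigcup_{J\in\mathcal J_s,\,I(y)\subseteq J}A_J^\perp$, and since $I(y)\subseteq J$ implies $A_J^\perp\subseteq A_{I(y)}^\perp$, indeed every $v\in N_\sparse(y)$ lies in $A_{I(y)}^\perp$. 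Because $\point-y\in A_{I(y)}$, we get $\langle v,\point-y\rangle=0\le 0\cdot\norm{v}\norm{\point-y}$, which is exactly $(0,\delta)$-subregularity. The refinement for $\|\point\|_0=s$ follows the same way but is even simpler, since then $A_J$ with $J=I(\point)$ is the unique subspace meeting $\Ball_\delta(\point)$ inside $\sparse$.

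For the negative assertion — that subregularity fails for $\delta\ge\min\{|\point_j|:j\in I(\point)\}=:\mu$ — I would exhibit a point $y\in\sparse$ at distance $\le\delta$ from $\point$ and a normal $v\in N_\sparse(y)$ violating the inequality. The natural choice: let $j_0$ achieve the minimum $|\point_{j_0}|=\mu$, take $J'\in\mathcal J_s$ with $I(\point)\setminus\{j_0\}\subseteq J'$ but $j_0\notin J'$ (possible since $s\le n-1$ leaves room), and set $y=P_{A_{J'}}\point$, which differs from $\point$ only in the $j_0$-coordinate, so $\norm{\point-y}=\mu\le\delta$. Then $e_{j_0}\in A_{J'}^\perp\subseteq N_\sparse(y)$ by \eqref{eq:sparseNcone} (since $I(y)\subseteq J'$), and $\langle e_{j_0},\point-y\rangle=\point_{j_0}$, while $\norm{e_{j_0}}\norm{\point-y}=|\point_{j_0}|$, so the ratio is $1$, not $0$; hence no $\varepsilon<1$, in particular $\varepsilon=0$, works at this $\delta$. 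Finally, at $\point=0$: $\sparse\cap\Ball_\delta(0)$ contains only vectors of sparsity $\le s$, every such $y$ lies in some $A_J$ with $0\in A_J$, and $N_\sparse(y)\subseteq A_{I(y)}^\perp$ exactly as above, so $\langle v,0-y\rangle=0$ for every $\delta>0$ — giving $(0,\infty)$-subregularity. The main obstacle, such as it is, is bookkeeping: making sure the index-set inclusions $I(\point)\subseteq I(y)\subseteq J$ are tracked correctly so that the normal-cone formula \eqref{eq:sparseNcone} really does force $v\perp(\point-y)$, and checking the $s\le n-1$ hypothesis is used exactly where an auxiliary coordinate must be dropped in the negative part.
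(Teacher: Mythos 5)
Your proof is correct and follows essentially the same route as the paper's: the positive part and the case $\point=0$ rest on the identical observation that, within the ball, every normal $v\in N_{\sparse}(y)$ lies in $A_J^\perp$ for some $J$ with $I(\point)\subseteq I(y)\subseteq J$, so $v\perp(\point-y)$; and the negative part uses the same witness idea (a point of $\sparse$ at distance exactly $\min_{j\in I(\point)}|\point_j|$ lying in a subspace $A_{J'}$ that omits a minimal coordinate), merely written out more explicitly than the paper's appeal to Lemma \ref{t:dist AJ}. The only cosmetic omission is that when $\point_{j_0}<0$ you should take $v=-e_{j_0}$ (still a normal, since $A_{J'}^\perp$ is a subspace) so that $\langle v,\point-y\rangle>0$; the paper makes this sign choice explicit.
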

\begin{proof}
Choose any $x\in \Ball_\delta(\point)\cap \sparse$ and  any $v\in N_{\sparse}(x)$.  
By the characterization of the normal cone in (\ref{eq:sparseNcone}) there is some $J\in \mathcal{J}_s$ with 
$I(x)\subseteq J$ and $v\in A^\perp_J\subset N_{\sparse}(x)$.  As in the proof of Lemma \ref{t:ugh}, for 
any $\delta\in (0,\min\left\{ |\point_j|~\middle| ~j\in I(\point)\right\})$ we have 
$I(\point)\subseteq I(x)$, hence $\point-x~{\in A_J}$ and thus
$\langle {v},{\point-x\rangle}=0$. 
By the definition of $(\varepsilon, \delta)$-regularity (Definition \ref{epsilondeltasubregular}) 
$\sparse$ is $(0,\delta)$-subregular as claimed.  

That $\sparse$ is not $(0,\delta)$-subregular at $\point\in\sparse\setminus\{0\}$ for any 
$\delta\geq\min\left\{ |\point_j|~\middle| ~j\in I(\point)\right\})$ follows from the failure of 
Lemma \ref{t:ugh} on balls larger than $\min\left\{ |\point_j|~\middle| ~j\in I(\point)\right\}$. 
Indeed, suppose \newline $\delta$ $\geq$ $\min\left\{ |\point_j|~\middle| ~j\in I(\point)\right\}$, then 
by Lemma \ref{t:dist AJ} there is a point $x\in \Ball_\delta(\point)\cap \sparse$ with $x\in A_J\subset\sparse$ 
but $\point\notin A_J$.   Now we choose $v\in A^\perp_J\subset N_{\sparse}(x)$.  Since 
$\point\notin A_J$, then $\point-x~{\notin A_J}$ and thus
$|\langle {v},{\point-x\rangle}|>0$.  Since  $N_{\sparse}(x)$ is a union of subspaces, the sign of 
$v$ can be chosen so that $\langle {v},{\point-x\rangle}>0$, in violation of $(0,\delta)$-subregularity.

For the case $\point=0$, by (\ref{eq:sparseNcone}) for any $x\in\sparse$ and 
$v\in N_{\sparse}(x)$ we have $\langle {v},{x}\rangle=0$, since $\textup{supp}(x)^\perp \perp \textup{supp}(x)$, 
which completes the proof.
\end{proof}

\subsection{Regularity of the collection $(\sparse, B)$}
We show in this section that the collection $(\sparse, B)$ is locally linearly regular as long as the intersection is non\-empty. 
We begin with a technical lemma.
\begin{lemma}[linear regularity under unions]\label{t:union}
Let $\left(\Omega_1,\Omega_2,\dots,\Omega_{m}, \Omega_{m+1}\right)$ be a collection of non\-empty subsets of $\E$ with 
nonempty intersection.  Let 
$\point\in \left(\cap_{j=1}^m\Omega_j\right)\cap \Omega_{m+1}$.  Suppose
that, for some $\delta>0$, the pair $\left(\Omega_j, \Omega_{m+1}\right)$ is locally linearly regular 
with modulus $\kappa_j$ on $\Ball_\delta(\point)$ for each $j\in \{1,2,\dots, m\}$.  
Then the collection $\left(\bigcup_{j=1}^m\Omega_j, \Omega_{m+1}\right)$ is locally linearly regular at $\point$ on $\Ball_\delta(\point)$
with modulus $\overline{\kappa}=\max_j\{\kappa_j\}$.  
\end{lemma}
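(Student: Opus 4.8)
The plan is to reduce the claim to two elementary facts about distances and finite unions. First, the distributive law gives
\[
  \left(\bigcup_{j=1}^m\Omega_j\right)\cap\Omega_{m+1}=\bigcup_{j=1}^m\left(\Omega_j\cap\Omega_{m+1}\right).
\]
Second, the distance to a finite union of nonempty sets equals the minimum of the distances to the individual sets, i.e.\ $d_{\bigcup_{j=1}^m S_j}(x)=\min_{j=1,\dots,m}d_{S_j}(x)$; this is immediate from $d_\Omega(x)=\inf_{y\in\Omega}\norm{x-y}$, and the minimum is attained because there are finitely many sets. Note also that $\point\in\Omega_j\cap\Omega_{m+1}$ for every $j$ (since $\point\in\cap_{j=1}^m\Omega_j$ and $\point\in\Omega_{m+1}$), so each local linear regularity hypothesis for the pair $(\Omega_j,\Omega_{m+1})$ on $\Ball_\delta(\point)$ is meaningful, and $\point\in\left(\bigcup_{j=1}^m\Omega_j\right)\cap\Omega_{m+1}$.

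First I would fix an arbitrary $x\in\Ball_\delta(\point)$ and choose an index $j^\star\in\{1,\dots,m\}$ attaining $d_{\bigcup_{j=1}^m\Omega_j}(x)=\min_{j}d_{\Omega_j}(x)=d_{\Omega_{j^\star}}(x)$. Then, chaining the distributive law, the distance-to-union identity, the local linear regularity of the pair $(\Omega_{j^\star},\Omega_{m+1})$ on $\Ball_\delta(\point)$, and finally $\kappa_{j^\star}\le\overline{\kappa}$ together with the defining property of $j^\star$, I obtain
\begin{align*}
  d_{\left(\bigcup_{j=1}^m\Omega_j\right)\cap\Omega_{m+1}}(x)
  &=\min_{j}d_{\Omega_j\cap\Omega_{m+1}}(x)
  \le d_{\Omega_{j^\star}\cap\Omega_{m+1}}(x)\\
  &\le\kappa_{j^\star}\max\left\{d_{\Omega_{j^\star}}(x),d_{\Omega_{m+1}}(x)\right\}
  \le\overline{\kappa}\max\left\{d_{\bigcup_{j=1}^m\Omega_j}(x),d_{\Omega_{m+1}}(x)\right\}.
\end{align*}
Since $x\in\Ball_\delta(\point)$ was arbitrary, this is precisely estimate \eqref{eq:locallinear} for the pair $\left(\bigcup_{j=1}^m\Omega_j,\Omega_{m+1}\right)$ at $\point$ on $\Ball_\delta(\point)$ with modulus $\overline{\kappa}$, which is the assertion of the lemma.

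There is no genuine obstacle here; the argument is a bookkeeping exercise. The one point that requires a moment's care is to use the \emph{same} index $j^\star$ both for the distance to the union $\bigcup_{j}\Omega_j$ and for the linear regularity estimate that is invoked: it is precisely the choice of $j^\star$ as a minimiser of $j\mapsto d_{\Omega_j}(x)$ that allows one to replace $d_{\Omega_{j^\star}}(x)$ by $d_{\bigcup_{j}\Omega_j}(x)$ in the last inequality. (If, as in Definition \ref{localregular}, the $\Omega_j$ are taken to be closed, then $\bigcup_{j=1}^m\Omega_j$ is closed as well, so the conclusion concerns a legitimate collection of closed sets.)
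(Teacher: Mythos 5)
Your proposal is correct and follows essentially the same route as the paper: both rest on the identity $d_{(\cup_j\Omega_j)\cap\Omega_{m+1}}(x)=\min_j d_{\Omega_j\cap\Omega_{m+1}}(x)$, apply the pairwise regularity estimate, and then exchange the min and the max (the paper via the identity $\min_j\max\{a_j,b\}=\max\{\min_j a_j,b\}$, you via the explicit minimizing index $j^\star$, which amounts to the same thing). No gaps.
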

\begin{proof}
Denote $\Gamma\equiv \bigcup_{j=1}^m\Omega_j$.  
First note that for all $x\in\Ball_\delta(\point)$ we have
\begin{equation}\label{e:obvious}
\dist{x}{\Gamma\cap \Omega_{m+1}} = \min_{j}\left\{\dist{x}{\Omega_j\cap \Omega_{m+1}}\right\}
\leq \min_{j}\left\{\kappa_j\max\{\dist{x}{\Omega_j}, ~\dist{x}{\Omega_{m+1}}\}\right\},
\end{equation}
where the inequality on the right follows from the assumption that $\left(\Omega_j, \Omega_{m+1}\right)$ 
is locally linearly regular with modulus $\kappa_j$ on $\Ball_\delta(x)$.
Let $\overline\kappa\geq\max_j\{\kappa_j\}$.  Then   
\begin{equation}\label{e:obvious2}
\dist{x}{\Gamma\cap \Omega_{m+1}} 
\leq \overline\kappa\min_{j}\left\{\max\{\dist{x}{\Omega_j}, ~\dist{x}{\Omega_{m+1}}\}\right\} = \overline\kappa\max\left\{\min_j\{\dist{x}{\Omega_j}\},~\dist{x}{\Omega_{m+1}} \right\}.
\end{equation}
This completes the proof.
\end{proof}

\begin{thm}[regularity of $(\sparse, B)$]\label{t:(A,B) regular}
Let ${\sparse}$ and $B$ be defined by (\ref{e:A}) and (\ref{e:B}) with $\sparse\cap B\neq \emptyset$.  
At any $\point\in {\sparse}\cap B$ and for any $\delta\in(0,\min\left\{ |\point_j|~\middle| ~j\in I(\point)\right\})$ 
the collection $(\sparse, B)$ is locally linearly regular on $\Ball_{\delta/2}(\point)$
 with modulus of regularity 
$\overline{\kappa}=\underset{J\in\mathcal{J}_s, I(\point)\subseteq J}{\max}\{\kappa_J\}$ where $\kappa_J$ is the 
modulus of regularity of the collection $(A_J,B)$.
\end{thm}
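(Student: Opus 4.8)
The plan is to describe $\sparse$ near $\point$ as the finite union $\Gamma:=\bigcup_{J\in\mathcal J_s,\,I(\point)\subseteq J}A_J$ of exactly those coordinate subspaces that pass through $\point$, to use the elementary fact recalled in Section~2 that two affine subspaces with a common point form a linearly regular collection, to glue these pairs together via Lemma~\ref{t:union}, and finally to transfer the resulting estimate from $\Gamma$ back to $\sparse$ at the cost of halving the radius. If $\point=0$ then $I(\point)=\emptyset$, so $\Gamma=\sparse$ holds globally and the assertion is immediate from Lemma~\ref{t:union}; hence assume $\point\neq 0$ and fix $\delta\in(0,\min\{|\point_j|:j\in I(\point)\})$.

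First I would note that whenever $J\in\mathcal J_s$ satisfies $I(\point)\subseteq J$ the subspace $A_J=\textup{span}\{e_i:i\in J\}$ contains $\point$, so $(A_J,B)$ is a collection of two affine subspaces with $\point\in A_J\cap B$; by \cite[Proposition~5.9 and Remark~5.10]{BauBor96} it is linearly regular, hence locally linearly regular on $\Ball_\delta(\point)$ with modulus $\kappa_J$ equal to its modulus of regularity. Applying Lemma~\ref{t:union} with $\Omega_{m+1}=B$ and the remaining $\Omega_j$ ranging over these subspaces $A_J$ (the hypothesis holds because $\point\in B\cap\bigcap_{J}A_J$) shows that $(\Gamma,B)$ is locally linearly regular on $\Ball_\delta(\point)$ with modulus $\overline\kappa=\max_{J\in\mathcal J_s,\,I(\point)\subseteq J}\kappa_J$; that is,
\[
 d_{\Gamma\cap B}(x)\ \le\ \overline\kappa\,\max\{d_{\Gamma}(x),\,d_{B}(x)\}\qquad\text{for all }x\in\Ball_\delta(\point),
\]
and $\point\in\Gamma\cap B$, so none of the sets involved is empty.

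It remains to replace $\Gamma$ by $\sparse$ on the smaller ball. Since $\Gamma\subseteq\sparse$ we have $\Gamma\cap B\subseteq\sparse\cap B$, hence $d_{\sparse\cap B}(x)\le d_{\Gamma\cap B}(x)$ and $d_{\sparse}(x)\le d_{\Gamma}(x)$ for every $x$. For the reverse of the last inequality on the half-ball, take $x\in\Ball_{\delta/2}(\point)$ and any $x^+\in P_{\sparse}(x)$: by Lemma~\ref{t:stay}\eqref{t:stay2} we have $x^+\in\Ball_{\delta/2}(\point)\subseteq\Ball_\delta(\point)$, and by \eqref{e:roof} of Lemma~\ref{t:ugh} the sets $\sparse$ and $\Gamma$ coincide on $\Ball_\delta(\point)$; therefore $x^+\in\Gamma$ and $d_{\Gamma}(x)\le\|x-x^+\|=d_{\sparse}(x)$. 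Thus $d_{\Gamma}(x)=d_{\sparse}(x)$ for all $x\in\Ball_{\delta/2}(\point)$, and chaining the inequalities yields
\[
 d_{\sparse\cap B}(x)\ \le\ d_{\Gamma\cap B}(x)\ \le\ \overline\kappa\,\max\{d_{\Gamma}(x),\,d_{B}(x)\}\ =\ \overline\kappa\,\max\{d_{\sparse}(x),\,d_{B}(x)\}
\]
for all $x\in\Ball_{\delta/2}(\point)$, which is precisely local linear regularity of $(\sparse,B)$ on $\Ball_{\delta/2}(\point)$ with modulus $\overline\kappa$.

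The only genuinely delicate step is the last one, and it is what forces the radius to drop from $\delta$ to $\delta/2$: Lemma~\ref{t:union} only governs the union $\Gamma$ of the subspaces through $\point$, whereas $\sparse$ contains many further subspaces $A_J$ with $\point\notin A_J$, and one must prevent the nearest point of $\sparse$ to a nearby $x$ from landing on one of these spurious pieces. This is exactly what the stability estimate in Lemma~\ref{t:stay}\eqref{t:stay2} (equivalently, the radius bound in Lemma~\ref{t:dist AJ}) guarantees, and only on $\Ball_{\delta/2}(\point)$. Everything else is routine bookkeeping with the two cited lemmas together with the standard linear regularity of affine subspaces.
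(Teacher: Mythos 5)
Your proof is correct and takes essentially the same approach as the paper: decompose $\sparse$ near $\point$ into the union $\overline{\sparse}$ of the subspaces $A_J$ containing $\point$, invoke linear regularity of each pair $(A_J,B)$ and Lemma \ref{t:union}, and then use Lemma \ref{t:ugh} together with Lemma \ref{t:stay}\eqref{t:stay2} to identify $d_{\sparse}$ with $d_{\overline{\sparse}}$ on the half-ball. Your write-up is in fact slightly more explicit than the paper's in recording the inequality $d_{\sparse\cap B}(x)\leq d_{\overline{\sparse}\cap B}(x)$ and in treating the case $\point=0$ separately, but the argument is the same.
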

\begin{proof}
For any $\point\in\sparse\cap B$ we have $\point\in A_J\cap B$ for all $J\in\mathcal{J}_s$ with 
$I(\point)\subseteq J$ and thus $(A_J, B)$ is linearly regular with modulus of regularity $\kappa_J$ 
\cite[Proposition 5.9 and Remark 5.10]{BauBor96}.  
Define 
\[
\overline{\sparse}\equiv \underset{J\in\mathcal{J}_s,~ I(\point)\subseteq J}{\bigcup}A_J.
\]

Then by Lemma \ref{t:union} the 
collection 
$
\left(  \overline{\sparse}, B\right)
$
is linearly regular at $\point$ with modulus of regularity 
$\overline{\kappa}\equiv \underset{J\in\mathcal{J}_s,~ I(\point)\subseteq J}{\max}\{\kappa_J\}$.  By 
Lemma \ref{t:ugh}  $\sparse\cap \Ball_{\delta/2}(\point)=\overline{\sparse}\cap\Ball_{\delta/2}(\point)$ 
for any $\delta\in(0,\min\left\{ |\point_j|~\middle| ~j\in I(\point)\right\})$.  
Moreover, by Lemma \ref{t:stay}\eqref{t:stay2}, for all $x\in \Ball_{\delta/2}(\point)$, we have 
$P_{\sparse} x \subset \Ball_{\delta/2}(\point)$,  and thus $P_{\sparse} x = P_{\overline{\sparse}} x$. In 
other words, $\dist{x}{\sparse} = \dist{x}{\overline{\sparse}}$ for all $x\in \Ball_{\delta/2}(\point)$,
hence the collection $(\sparse, B)$ is locally linearly regular on $\Ball_\delta(\point)$ with 
modulus $\overline{\kappa}$.  This completes the proof.   
\end{proof}
\begin{remark}
   A simple example shows that the collection $(\sparse, B)$ need not be linearly regular.  Consider the 
sparsity set $A_1$, the affine set $B=\{(1, \tau, 0)~|~\tau\in\R\}$ and the sequence of points 
$(x^k)_{k\in\N}$ defined by $x^k=(0,k,0)$.  Then $A_1\cap B=\{(1, 0, 0)\}$ and 
$\max\{\dist{x^k}{A_1}, \dist{x^k}{B}\}=1$ for all $k$ while $\dist{x^k}{A_1\cap B}\to \infty$ as $k\to \infty$.  
\end{remark}

\subsection{Local linear convergence of alternating projections }
The next result shows the local linear convergence of alternating projections  to a solution of \eqref{eq:feasibility}. 
This was also shown in \cite[Theorem 3.19]{BLPWII} using very different techniques.  The approach taken 
here based on the modulus of regularity $\kappa$ on $\Ball_\delta(x)$ is more general, that is, it can be applied to other 
nonconvex problems, but the relationship between the modulus of regularity and the 
{\em angle of intersection} which is used to characterize the optimal rate of convergence \cite[Theorem 2.11]{BLPWII} 
is not fully understood.

\begin{thm}\label{t:AP convergence}
Let ${\sparse}$ and $B$ be defined by (\ref{e:A}) and (\ref{e:B}) with nonempty intersection and let $\point\in {\sparse}\cap B$.  
Choose $0<\delta<\min\left\{ |\point_j|~\middle| ~j\in I(\point)\right\}$.
For $x^0\in \Ball_{\delta/2}(\point)$ the alternating projections  iterates converge linearly to the intersection ${\sparse}\cap B$ with rate
$\left(1-\frac 1{\kappa^2}\right)$ where $\kappa$ is the modulus of regularity 
of $(\sparse, B)$ on $\Ball_\delta(\point)$ (Definition \ref{localregular}). 
\end{thm}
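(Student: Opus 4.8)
The plan is to obtain the result as a direct application of Lemma~\ref{convergence:AP} to the pair $\Omega_1=\sparse$, $\Omega_2=B$, with the subregularity constant $\varepsilon=0$ and the regularity modulus $\kappa$ supplied by Theorem~\ref{t:(A,B) regular}. Once the hypotheses of that lemma are in place, its conclusion reads
\[
 d_{\sparse\cap B}(x^{k+1})\;\le\;\Bigl(1-\tfrac1{\kappa^2}+0\Bigr)\,d_{\sparse\cap B}(x^k),
\]
which is exactly the asserted one-step estimate; and since $\kappa\ge 1$ for any modulus of linear regularity, the factor $1-\tfrac1{\kappa^2}$ lies in $[0,1)$, so this is genuine linear decay of the distance to the intersection.

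First I would check the two regularity inputs of Lemma~\ref{convergence:AP}. The $(\varepsilon,\delta)$-subregularity of $\sparse$ at $\point$ with $\varepsilon=0$ on $\Ball_\delta(\point)$, for $\delta<\min\{|\point_j|\mid j\in I(\point)\}$, is precisely Theorem~\ref{t:Asubregular} (and the degenerate case $\point=0$ is covered there by $(0,\infty)$-subregularity). For $B$ there is nothing to prove: $B$ is an affine subspace, its limiting normal cone $N_B(y)$ equals the fixed subspace $B^\perp$ at every $y\in B$, and every difference $z-y$ of points of $B$ lies in the subspace parallel to $B$, which is orthogonal to $B^\perp$; hence $\langle v,z-y\rangle=0$ for all admissible $y,z,v$ and $B$ is $(0,\infty)$-subregular at $\point$, in particular $(0,\delta)$-subregular. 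The local linear regularity of $(\sparse,B)$ at $\point$ with modulus $\kappa=\max_{J:I(\point)\subseteq J}\kappa_J$ is Theorem~\ref{t:(A,B) regular}. Feeding these into Lemma~\ref{convergence:AP} gives the contraction above for iterates in the relevant ball. To pass from a contracting distance to a convergent sequence I would add the two standard observations: (i) by Lemma~\ref{t:stay}(i),(ii) the operator $T_{AP}=P_{\sparse}P_B$ maps $\Ball_{\delta/2}(\point)$ into itself, so every iterate remains where the estimate is valid; and (ii) writing $\bar x^k$ for a nearest point of $\sparse\cap B$ to $x^k$, nonexpansiveness of $P_B$ and $\bar x^k\in\sparse\cap B\subseteq\sparse$ give $\|x^{k+1}-x^k\|\le\|x^{k+1}-P_Bx^k\|+\|P_Bx^k-x^k\|\le 2\,d_{\sparse\cap B}(x^k)\le 2\bigl(1-\tfrac1{\kappa^2}\bigr)^k d_{\sparse\cap B}(x^0)$, so $(x^k)$ is Cauchy, converges $R$-linearly to some $\hat x$, and closedness of $\sparse\cap B$ together with $d_{\sparse\cap B}(\hat x)=\lim_k d_{\sparse\cap B}(x^k)=0$ forces $\hat x\in\sparse\cap B$.

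The step I expect to be the main obstacle is the bookkeeping of radii: Lemma~\ref{convergence:AP} as stated guarantees attraction only on the ball of half the radius on which linear regularity and subregularity are posited, whereas the theorem claims attraction on all of $\Ball_{\delta/2}(\point)$ while the modulus $\kappa$ is the one attached to $\Ball_\delta(\point)$. The way to reconcile this is to exploit that, by Lemma~\ref{t:ugh}, inside $\Ball_{\delta/2}(\point)$ the nonconvex set $\sparse$ is indistinguishable from the finite union of subspaces $\overline{\sparse}=\bigcup_{J:\,I(\point)\subseteq J}A_J$, all of which pass through $\point$; by Lemma~\ref{t:stay}(ii) the projector agrees there, $P_{\sparse}x=P_{\overline{\sparse}}x$, so the alternating projections sequence for $(\sparse,B)$ started in $\Ball_{\delta/2}(\point)$ coincides with the one for the ``nicer'' pair $(\overline{\sparse},B)$. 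Now $\overline{\sparse}$ and $B$ are a finite union of subspaces and an affine subspace all meeting at $\point$: the pair is linearly regular on \emph{every} ball about $\point$ with the single modulus $\kappa$ (Lemma~\ref{t:union} together with \cite[Proposition 5.9, Remark 5.10]{BauBor96}), and $\overline{\sparse}$ is $(0,\infty)$-subregular at $\point$ because any normal to a constituent subspace $A_J\subseteq\overline{\sparse}$ is orthogonal to $\point-y$ (since $A_J\subseteq\overline{\sparse}$ with $J\in\mathcal J_s$ forces $I(\point)\subseteq J$, hence $\point\in A_J$). Therefore Lemma~\ref{convergence:AP} applies to $(\overline{\sparse},B)$ with an arbitrarily large regularity radius, and thus with attraction radius covering all of $\Ball_{\delta/2}(\point)$; finally, for $x\in\Ball_{\delta/2}(\point)$ one has $d_{\sparse\cap B}(x)=d_{\overline{\sparse}\cap B}(x)$ (any nearest point lies within $2\|x-\point\|<\delta$ of $\point$, where the two intersections coincide), which lets us restate the rate in terms of the original intersection and finishes the proof.

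A final small check worth making explicit is the trivial/degenerate situations: if $s=n$ then $\sparse=\R^n$ and the claim is immediate, and if $\point=0$ then $\overline{\sparse}=\sparse$ and $(\sparse,B)$ is in fact globally linearly regular (all $A_J$ meet $B$ at $0$), so the localization above is unnecessary and the argument simplifies.
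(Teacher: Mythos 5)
Your proposal follows the paper's own proof essentially step for step: invariance of $\Ball_{\delta/2}(\point)$ under $P_B$ and $P_{\sparse}$ via Lemma~\ref{t:stay}, $(0,\delta)$-subregularity of $\sparse$ from Theorem~\ref{t:Asubregular} together with $(0,\infty)$-subregularity of the convex set $B$, local linear regularity of the pair from Theorem~\ref{t:(A,B) regular}, and then Lemma~\ref{convergence:AP} with $\varepsilon=0$. The additional details you supply --- the Cauchy-sequence argument upgrading decay of $d_{\sparse\cap B}(x^k)$ to actual convergence of the iterates, and the radius bookkeeping handled by replacing $\sparse$ with $\overline{\sparse}=\bigcup_{J\supseteq I(\point)}A_J$ on the small ball --- are correct and merely make explicit what the paper leaves implicit.
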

\begin{proof}
By Lemma \ref{t:stay}\eqref{t:stay1} and \eqref{t:stay2} the projections $P_B$  and $P_{\sparse}$
each map $\Ball_{\delta/2}(\point)$ to itself, hence their composition maps $\Ball_{\delta/2}(\point)$ to itself. 

Finally, we show that we may apply Lemma \ref{convergence:AP}. The set $B$ is $(0,+\infty)$-subregular at every point 
in $B$ (i.e., convex) and by Theorem \ref{t:Asubregular} the sparsity set ${\sparse}$ is $(0,\delta)-$ subregular at $\point$. 
Lastly, by Theorem \ref{t:(A,B) regular} the pair $(\sparse, B)$ is locally linearly regular at $\point$ on 
$\Ball_\delta(\point)$ for any $\delta\in (0, \min\left\{ |\point_j|~\middle| ~j\in I(\point)\right\})$.  The assertion 
then follows from Lemma \ref{convergence:AP} with $\epsilon=0$.
\end{proof}

\begin{remark}\label{r:s too big}
The above result does \emph{not} need an exact a priori assumption on the sparsity $s$.
If there is a solution $\overline{x}\in {\sparse}\cap \B$, then $\|\overline{x}\|_0$ can be smaller than $s$ and, geometrically speaking, 
$\overline{x}$ is on a crossing of linear subspaces contained in ${\sparse}$.  It is also worth noting that the 
assumptions are also {\em not} tantamount to local convexity.   In the case that $B$ is a subspace, the point $0$ is 
trivially a solution to \eqref{eq:feasibility} (and, for that matter \eqref{eq:sparseaffine}).  The set ${\sparse}$ is 
not convex on any neighborhood of $0$, however the assumptions of Theorem \ref{t:AP convergence}
hold, and alternating projections  indeed converges locally linearly to $0$, regardless of the size of the parameter $s$.  
\end{remark}

\subsection{Global convergence of alternating projections}

Following \cite{BeckTeb} where the authors consider problem \eqref{e:BeckTeboulle}, we present a sufficient condition for global linear 
convergence of the alternating projections  algorithm for affine sparse feasibility.  Though our presentation is modeled after \cite{BeckTeb} this work 
is predated by the nearly identical approach developed in \cite{BloomensathDavies09,BloomensathDavies10}.  We also note that 
the arguments presented here do not use any structure that is particular to $\R^n$, hence the results
can be extended, as they were in \cite{BeckTeb}, to the problem of finding the intersection of the set of 
matrices with rank at most $s$ and an affine subspace in the Euclidean space of matrices.  Since this 
generalization complicates the local analysis, we have chosen to limit our scope to $\R^n$.   

Key to the analysis of \cite{BloomensathDavies09, BloomensathDavies10, BeckTeb} are the following well-known 
restrictions on the matrix $M$.  
\begin{definition}\label{d:sRIP}
The mapping $M: \R^n\to \R^m$ satisfies the  \emph{restricted isometry property} of order $s$, if there exists $0\leq \delta \leq 1$ such that
\begin{equation}\label{eq:RIP}
(1-\delta)\enorm{x}^2\leq \enorm{Mx}^2 \leq (1+\delta)\enorm{x}^2 \quad \forall x\in {\sparse}.
\end{equation}
The infimum $\delta_s$ of all such $\delta$ is the \emph{restricted isometry constant}.
\newline
The mapping $M: \R^n\to \R^m$ satisfies the \emph{scaled/asymmetric restricted isometry property} of order $(s, \alpha)$ for $\alpha>1$, if 
there exist $\nu_s, \mu_s >0$ with $1\leq \frac{\mu_s}{\nu_s}< \alpha$ such that
\begin{equation}\label{eq:SRIP}
   \nu_s\enorm{x}^2\leq \enorm{Mx}^2 \leq \mu_s\enorm{x}^2 \quad \forall x\in {\sparse}.
\end{equation}
\end{definition}

\noindent The restricted isometry property \eqref{eq:RIP} was introduced in \cite{CandTao}, while the asymmetric 
version \eqref{eq:SRIP} first appeared in \cite[Theorem 4]{BloomensathDavies09}. 
Clearly \eqref{eq:RIP} implies \eqref{eq:SRIP}, since if a matrix $M$ satisfies \eqref{eq:RIP} of order $s$ with 
restricted isometry constant $\delta_s$, then it also satisfies \eqref{eq:SRIP} of order $\left(s, \beta  \right)$ for 
$\beta >\frac{1+\delta_s}{1-\delta_s}$.

To motivate the projected gradient algorithm given below, note that any solution to \eqref{eq:feasibility} is also a 
solution to 
\begin{equation}\label{eq:P2}
\textup{Find }\point \in S\equiv \textup{argmin}_{x\in{\sparse}} \ \frac{1}{2}\enorm{Mx-p}^2 .
\end{equation}
Conversely, if ${\sparse}\cap B\neq \emptyset$ and $\point$ is in $S$, then $\point$ solves \eqref{eq:feasibility}. 
\begin{definition}[projected gradients]\label{PG}
Given a closed set $A\subset\E$, a continuously differentiable function $f:\E\to \R$ and a positive real number $\tau$, the mapping 
\begin{equation}\label{eq:PG}
T_{PG}(x; \tau)=P_A\left(x- \frac{1}{\tau}\nabla f(x) \right)
\end{equation}
is called the projected gradient operator.  
The projected gradients algorithm is the fixed point iteration 
$$x^{k+1}\in T_{PG}(x^k;\tau_k) = P_A\left(x^k- \frac{1}{\tau_k}\nabla f(x^k) \right),\ k\in \N$$
for $x^0$ given arbitrarily and a sequence of positive real numbers $(\tau_k)_{k\in\N}$. 
\end{definition}
In the context of linear least squares with a sparsity constraint, the projected gradient algorithm is equivalent to what 
is also known as the iterative hard thresholding algorithm (see for instance \cite{BloomensathDavies09, BloomensathDavies10, Cevher12}) 
where the constraint $A=\sparse$ and the projector given by \eqref{eq:P_A} amounts to a thresholding operation on the largest 
elements of the iterate. 

With these definitions we cite a result on  convergence of the projected gradient algorithm applied to 
\eqref{eq:P2} (see  \cite[Theorem 4]{BloomensathDavies10} and 
\cite[Theorem 3 and Corollary 1]{BeckTeb}).
\begin{thm}[global convergence of projected gradients/iterative hard thresholding]\label{thm:BeckTeb}
Let $M$ satisfy \eqref{eq:SRIP} of order $(2s,2)$ and, for any given initial 
point $x^0$, let the sequence $(x^k)_{k\in\N}$ be generated by the projected gradient algorithm 
with 
$A= \sparse$,  $f(x) = \frac{1}{2}\left\| Mx-p\right\|_2^2$  and the constant step size 
$\tau\in[\mu_{2s},2\nu_{2s})$.  Then the iterates converge to the unique global 
solution to \eqref{eq:P2} and  $f(x^k)\to 0$  linearly as $k\to \infty$ with rate  
$\rho=\left(\frac{\tau}{\nu_{2s}}-1\right)<1$, that is, 
\[
f(x^{k+1}) \leq \rho f(x^{k})\qquad (\forall k\in \N).   
\]
\end{thm}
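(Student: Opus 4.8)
The plan is to carry out the classical ``sufficient decrease'' argument behind iterative hard thresholding (as in \cite{BloomensathDavies09,BloomensathDavies10,BeckTeb}), whose two ingredients are: (a) the exact second-order expansion of the quadratic $f$, sharpened on $2s$-sparse directions by the two inequalities in \eqref{eq:SRIP}; and (b) the variational characterization \eqref{d:projection} of the projection onto $\sparse$. Throughout I write $g^k\equiv\nabla f(x^k)=M^\top(Mx^k-p)$ and use repeatedly that the difference of any two $s$-sparse vectors is $2s$-sparse, so that \eqref{eq:SRIP} of order $(2s,2)$ applies to it. Since the ambient problem is \eqref{eq:feasibility}, we may assume $\sparse\cap B\neq\emptyset$; note first that if $\point,\point'\in\sparse\cap B$ then $\point-\point'\in\ker M$ is $2s$-sparse, so the lower bound in \eqref{eq:SRIP} forces $\point=\point'$. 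Hence $\sparse\cap B=\{\point\}$ is a singleton, $f(\point)=0$, and $\point$ is the unique global solution of \eqref{eq:P2}.

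First I would extract the projection inequality. By \eqref{eq:PG} and \eqref{d:projection} the point $x^{k+1}$ minimizes $y\mapsto\enorm{y-(x^k-\tfrac1\tau g^k)}^2$ over $y\in\sparse$, so expanding the square gives, for every $s$-sparse $y$,
\[
\langle g^k,\,x^{k+1}-x^k\rangle+\tfrac\tau2\enorm{x^{k+1}-x^k}^2\ \le\ \langle g^k,\,y-x^k\rangle+\tfrac\tau2\enorm{y-x^k}^2 .
\]
Next I would invoke the quadratic identity $f(z)=f(w)+\langle\nabla f(w),\,z-w\rangle+\tfrac12\enorm{M(z-w)}^2$. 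With $z=x^{k+1},\,w=x^k$ (both $s$-sparse once $k\ge1$; see the caveat at the end), the upper bound in \eqref{eq:SRIP} together with $\tau\ge\mu_{2s}$ yields the majorization $f(x^{k+1})\le f(x^k)+\langle g^k,\,x^{k+1}-x^k\rangle+\tfrac\tau2\enorm{x^{k+1}-x^k}^2$; combining it with the projection inequality gives $f(x^{k+1})\le f(x^k)+\langle g^k,\,y-x^k\rangle+\tfrac\tau2\enorm{y-x^k}^2$ for every $s$-sparse $y$. Using the identity once more with $z=y,\,w=x^k$ to replace $\langle g^k,\,y-x^k\rangle$ by $f(y)-f(x^k)-\tfrac12\enorm{M(y-x^k)}^2$, and bounding $\enorm{M(y-x^k)}^2\ge\nu_{2s}\enorm{y-x^k}^2$ via \eqref{eq:SRIP}, I obtain, for every $s$-sparse $y$,
\[
f(x^{k+1})\ \le\ f(y)+\frac{\tau-\nu_{2s}}{2}\,\enorm{y-x^k}^2 .
\]

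I would then specialize to $y=\point$. Since $M\point=p$ we have $\enorm{M(\point-x^k)}^2=\enorm{Mx^k-p}^2=2f(x^k)$, and since $\point-x^k$ is $2s$-sparse the lower bound in \eqref{eq:SRIP} gives $\enorm{\point-x^k}^2\le\tfrac{2}{\nu_{2s}}f(x^k)$. Substituting this and $f(\point)=0$ yields
\[
f(x^{k+1})\ \le\ \frac{\tau-\nu_{2s}}{\nu_{2s}}\,f(x^k)\ =\ \left(\frac{\tau}{\nu_{2s}}-1\right)f(x^k)\ =\ \rho\,f(x^k),
\]
with $\rho<1$ precisely because $\tau<2\nu_{2s}$ --- this is where the ``order $2$'' in \eqref{eq:SRIP} is essential, since it is what makes the admissible interval $[\mu_{2s},2\nu_{2s})$ nonempty. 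Iterating gives $f(x^k)\to0$ linearly with rate $\rho$, and then $\enorm{x^k-\point}^2\le\tfrac{2}{\nu_{2s}}f(x^k)$ shows $x^k\to\point$ linearly as well.

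I expect the only real difficulty to be support bookkeeping: the argument must be arranged so that SRIP of order exactly $2s$ (rather than $3s$) suffices, which works because $\point$ enters the estimates only through $M\point=p$ and never as a vector whose support is stacked on top of $\supp(x^{k+1})\cup\supp(x^k)$. There is one genuine caveat: the initial iterate $x^0$ need not be $s$-sparse, so $x^1-x^0$ need not be $2s$-sparse and the majorization step above cannot invoke \eqref{eq:SRIP} at $k=0$; since $x^1\in P_{\sparse}(x^0-\tfrac1\tau g^0)$ is $s$-sparse, one simply starts the chain of inequalities at $k=1$, which does not affect the asserted asymptotic linear rate. Finally, set-valuedness of $P_{\sparse}$ causes no trouble, as the projection inequality holds verbatim for any selection $x^{k+1}\in T_{PG}(x^k;\tau)$.
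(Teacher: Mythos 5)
The paper does not actually prove this theorem---it cites it from Blumensath--Davies and Beck--Teboulle---and your argument is a correct, self-contained reconstruction of exactly the sufficient-decrease/majorization proof used in those sources, with the $2s$-sparsity bookkeeping (uniqueness via $\ker M\cap A_{2s}=\{0\}$, the projection inequality, the two uses of the exact quadratic expansion, and the specialization $y=\point$) all handled properly. Your caveat that the displayed inequality is only guaranteed for $k\ge 1$ when $x^0\notin\sparse$ is a genuine and correct refinement of the statement as transcribed in the paper, and it does not affect the asserted linear convergence.
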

\noindent We specialize this theorem to alternating projections  next. 
\begin{cor}[global convergence of alternating projections I]\label{t:API}
   Let the matrix $M$ satisfy \eqref{eq:SRIP} of order $(2s, 2)$ with  $\mu_{2s}=1$ and $MM^\top = \Id$.  
Then $\sparse\cap\B$ is a singleton and alternating projections applied to \eqref{eq:feasibility} 
converges linearly to $\sparse\cap\B$ with rate $\rho=\left(\frac{1}{\nu_{2s}}-1\right)<1$ 
for every initial point $x^0$.
\end{cor}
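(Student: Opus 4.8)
The plan is to observe that, under the hypotheses of the corollary, the alternating projections operator of Definition \ref{AP} with $\Omega_1={\sparse}$ and $\Omega_2=\B$ is \emph{literally} the projected gradient operator of Definition \ref{PG} for problem \eqref{eq:P2} with unit step size, and then to invoke Theorem \ref{thm:BeckTeb}. The key point is that $MM^\top=\Id$ collapses the Moore--Penrose inverse \eqref{MoorePenrose} to $\M=M^\top(MM^\top)^{-1}=M^\top$, so that by \eqref{eq:P_A} one has $P_\B x=x-M^\top(Mx-p)$; since $f(x)=\tfrac12\enorm{Mx-p}^2$ has gradient $\nabla f(x)=M^\top(Mx-p)$, this means $T_{AP}x=P_{\sparse}(P_\B x)=P_{\sparse}\bigl(x-\nabla f(x)\bigr)=T_{PG}(x;1)$ for every $x\in\E$. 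In particular, every alternating projections sequence for \eqref{eq:feasibility} is a projected gradient sequence for \eqref{eq:P2} with constant step size $\tau=1$.

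Next I would verify that $\tau=1$ is admissible in Theorem \ref{thm:BeckTeb}, i.e.\ that $\tau\in[\mu_{2s},2\nu_{2s})$. The left endpoint is attained since $\mu_{2s}=1$ by hypothesis; for the right endpoint, \eqref{eq:SRIP} of order $(2s,2)$ requires $\mu_{2s}/\nu_{2s}<2$, so $\nu_{2s}>\mu_{2s}/2=\tfrac12$ and hence $2\nu_{2s}>1$. Theorem \ref{thm:BeckTeb} therefore applies: the alternating projections iterates $(x^k)_{k\in\N}$ converge to the unique global minimizer $\point$ of \eqref{eq:P2}, and $f(x^k)\to0$ linearly with rate $\rho=\tfrac1{\nu_{2s}}-1$. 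Moreover $\rho\in[0,1)$, because $\tfrac12<\nu_{2s}\leq\mu_{2s}=1$.

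It remains to deduce the geometric statement. Continuity of $f$ together with $x^k\to\point$ and $f(x^k)\to0$ gives $f(\point)=0$, i.e.\ $M\point=p$, so $\point\in{\sparse}\cap\B$; in particular ${\sparse}\cap\B\neq\emptyset$. That it is a singleton follows from the lower bound in \eqref{eq:SRIP}: if $x_1,x_2\in{\sparse}\cap\B$, then $x_1-x_2$ is $2s$-sparse with $M(x_1-x_2)=0$, so $\nu_{2s}\enorm{x_1-x_2}^2\leq\enorm{M(x_1-x_2)}^2=0$, forcing $x_1=x_2$. Applying the same lower bound to the $2s$-sparse vector $x^k-\point$ (here $x^k\in{\sparse}$ for $k\geq1$, being a projection onto ${\sparse}$) yields $\nu_{2s}\enorm{x^k-\point}^2\leq\enorm{Mx^k-M\point}^2=\enorm{Mx^k-p}^2=2f(x^k)\leq2\rho^{k}f(x^0)$, so the distance of $x^k$ to ${\sparse}\cap\B=\{\point\}$ tends to $0$ R-linearly at rate $\rho$, which is the assertion.

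The whole argument is essentially bookkeeping once the reduction $T_{AP}=T_{PG}(\cdot;1)$ is in hand, and that reduction is where the two hypotheses really enter: $MM^\top=\Id$ is needed to turn $\M$ into $M^\top$, and $\mu_{2s}=1$ is needed so that the unit step size falls in the admissible window $[\mu_{2s},2\nu_{2s})$. The only mild subtlety is that $P_{\sparse}$ is set-valued, so ``alternating projections sequence'' must be read as ``for an arbitrary selection''; since Theorem \ref{thm:BeckTeb} is phrased for arbitrary projected gradient sequences, every such selection inherits the conclusion, which is all that is needed.
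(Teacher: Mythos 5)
Your proposal is correct and takes essentially the same route as the paper's proof: it uses $MM^\top=\Id$ to identify the alternating projections operator with the projected gradient operator at unit step size, checks that $\tau=1$ lies in $[\mu_{2s},2\nu_{2s})$ via $\mu_{2s}=1$ and $\nu_{2s}>\tfrac12$, and then invokes Theorem \ref{thm:BeckTeb}. The extra bookkeeping at the end (uniqueness of the intersection from the SRIP lower bound and the R-linear rate for the iterates deduced from $f(x^k)\to 0$) only makes explicit what the paper delegates to the cited theorem.
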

\begin{proof}
For $f(x) = \tfrac12\enorm{Mx-p}^2$ we have  $\nabla f(x) = M^\top(Mx -p)$.  The projected gradients  iteration 
with constant step length $\tau=1$ then takes the form 
$$x^{k+1} \in P_{\sparse}\left(x^k - \nabla f(x^k) \right) =  P_{\sparse}\left(x^k - M^\top(Mx^k -p) \right). $$
The projection onto the subspace $B$ is given by (see \eqref{eq:P_A})
$$ P_B x = \left(\Id-M^\top(MM^\top)^{-1}M\right)x + M^\top(MM^\top)^{-1}p.$$
Since $MM^\top =\Id $ this simplifies to $x^k - M^\top(Mx^k-p) = P_B x^k$, hence 
$$x^{k+1} \in P_{\sparse}\left(x^k - \nabla f(x^k) \right) = P_{\sparse}P_B x^k.$$  This shows that  
projected gradients \ref{PG} with unit step length applied to \eqref{eq:P2} with 
$A= \sparse$ and $f(x) = \frac{1}{2}\left\| Mx-p\right\|_2^2$  
is equivalent to the method of alternating projections \ref{AP} applied to \eqref{eq:feasibility}.

To show convergence to a unique solution, we apply Theorem \ref{thm:BeckTeb}, 
for which we must show that 
the step length $\tau=1$ lies in the nonempty interval $[\mu_{2s}, 2\nu_{2s})$.  
By assumption $M$ satisfies \eqref{eq:SRIP} 
of order $(2s,2)$ with $\mu_{2s} = 1$.  Hence $\frac{1}{2}<\nu_{2s}\leq 1$ 
and $\tau = 1$ lies in the nonempty interval $[1, 2\nu_{2s})$.  The 
assumptions of Theorem \ref{thm:BeckTeb} are thus satisfied with $\tau=1$, 
whence global linear convergence to the 
unique solution of \eqref{eq:P2}, and hence \eqref{eq:feasibility}, immediately follows. 
\end{proof}



The restriction to matrices satisfying $MM^\top=\Id$ is very strong indeed.  We consider next  
a different condition that, in principle, can be more broadly applied to the alternating projections algorithm. 
The difference lies in our ansatz: while in \cite{BeckTeb} the 
goal is to minimize  $f(x)\equiv \frac{1}{2}\lefn Mx-p\rign_2^2$ over $x\in\sparse$, we solve instead
\begin{equation}\label{eq:g}
    \underset{x\in\sparse}{\mbox{ minimize }} g(x)\equiv \frac{1}{2}\dist{x}{B}^2.
\end{equation}
These are different objective functions, yet the idea is similar: 
Both functions $f$ and $g$ take the value zero on $\sparse$ if and only if $x \in \sparse\cap B$.
The distance of the point $x$ to $B$, however, is the space of signals, while $f$ measures 
the distance of the image of $x$ under $M$ to the measurement. 
The former is more robust to bad conditioning of the matrix $M\in \R^{m\times n}$ with $m<n$,
since a poorly-conditioned $M$ could still yield a small residual $\frac{1}{2}\lefn Mx-p\rign_2^2$.

Note also that the matrix $\M M$ is the orthogonal projection onto the subspace $\ker(M)^\perp$.
This means that the operator norm of $\M M$ is $1$ and so we have, for all $x\in\R^n$, 
that $\lefn \M Mx\rign_2\leq  \lefn x\rign_2$.  Our second global result for alternating projections
given below, involves a scaled/asymmetric restricted isometry condition analogous to 
\eqref{eq:SRIP} with $M$ replaced by $\M M$.  This only requires 
a {\em lower bound} on the operator norm of  $\M M$ with 
respect to vectors of sparsity $2s$ since the upper bound analogous to \eqref{eq:SRIP} is automatic.  
Specifically, we assume that
\begin{equation}\label{upRIP}
M \mbox{ is full rank and } (1-\delta_{2s}) \lefn x\rign_2^2\leq \lefn \M Mx\rign_2^2 \quad\forall~x\in A_{2s}.
\end{equation}
The condition \eqref{upRIP} can be reformulated in terms of the scaled/asymmetric 
restricted isometry property \eqref{eq:SRIP} and strong regularity of the range of 
$M^\top$ and the complement of each of the subspaces comprising $A_{2s}$.  We remind the 
reader that $A_J\equiv \textup{span}\left\{e_i\middle|~i\in J\right\}$ for 
$J\in \mathcal{J}_{2s}:=\left\{ J\in 2^{\left\{1,2,\dots,n\right\}}\middle |~ J \mbox { has } 2s\mbox{ elements}\right\}$.
\begin{propn}[scaled/asymmetric restricted isometry and strong regularity]\label{t:strong regularity and RIP}
Let $M\in \R^{m\times n}$ with $m\leq n$ be full rank.  Then $M$ satisfies \eqref{upRIP} with $\delta_{2s}\in [0,\frac{\alpha-1}{\alpha})$
for some fixed $s>0$ and $\alpha>1$ if and only if $\M M$ satisfies the
scaled/asymmetric restricted isometry property 
\eqref{eq:SRIP} of order $(2s, \alpha)$  with $\mu_{2s}=1$ and $\nu_{2s}=(1-\delta_{2s})$.  
Moreover, for $M$ satisfying \eqref{upRIP} with $\delta_{2s}\in [0,\frac{\alpha-1}{\alpha})$
for some fixed $s>0$ and $\alpha>1$, for all $J\in \mathcal{J}_{2s}$ the collection $\left(A_J^\perp, \range(M^\top)\right)$ 
is strongly regular (Definition \ref{d:strong reg}), that is,
\begin{equation}\label{e:strong reg A_J-M}
(\forall J\in \mathcal{J}_{2s})\qquad A_J\cap \ker(M) = \{0\}. 
\end{equation}
\end{propn}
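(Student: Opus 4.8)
The plan is to dispatch the two assertions separately; both are essentially exercises in linear algebra combined with the characterization \eqref{eq:AB affine} of strong regularity for affine subspaces.

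For the equivalence, I would begin from the fact recorded just before the proposition that $\M M$ is the orthogonal projection onto $\range(M^\top)=\ker(M)^\perp$ (here $M$ full rank with $m\le n$ makes $MM^\top$ invertible, so $\M=M^\top(MM^\top)^{-1}$ is well defined). Consequently $\enorm{\M M x}\le\enorm{x}$ for every $x\in\R^n$, so the upper estimate in \eqref{eq:SRIP} holds automatically with $\mu_{2s}=1$ on all of $A_{2s}$. Thus, taking $\nu_{2s}=1-\delta_{2s}$, validity of \eqref{eq:SRIP} of order $(2s,\alpha)$ with $\mu_{2s}=1$ and $\nu_{2s}=1-\delta_{2s}$ amounts precisely to: (i) the lower bound $(1-\delta_{2s})\enorm{x}^2\le\enorm{\M M x}^2$ for all $x\in A_{2s}$, which is literally \eqref{upRIP}, together with (ii) the feasibility condition $1\le\mu_{2s}/\nu_{2s}=1/(1-\delta_{2s})<\alpha$. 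A one-line computation shows (ii) is equivalent to $\delta_{2s}\in[0,\tfrac{\alpha-1}{\alpha})$: the left inequality holds iff $\delta_{2s}\ge0$, and $1/(1-\delta_{2s})<\alpha$ iff $1-\delta_{2s}>1/\alpha$ iff $\delta_{2s}<\tfrac{\alpha-1}{\alpha}$; since $\tfrac{\alpha-1}{\alpha}<1$ we keep $1-\delta_{2s}>0$, so no division by zero arises. Reading these equivalences in both directions gives the first claim.

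For the second assertion, fix $J\in\mathcal{J}_{2s}$ and note $A_J\subseteq A_{2s}$, so \eqref{upRIP} applies to every $x\in A_J$. If $x\in A_J\cap\ker(M)$, then $Mx=0$ forces $\M M x=0$, whence $(1-\delta_{2s})\enorm{x}^2\le\enorm{\M M x}^2=0$; since $1-\delta_{2s}>0$ this gives $x=0$. Hence $A_J\cap\ker(M)=\{0\}$, which is exactly \eqref{e:strong reg A_J-M}. To conclude strong regularity of the pair $(A_J^\perp,\range(M^\top))$ I would invoke \eqref{eq:AB affine}: both members are subspaces, so their intersection contains $0$ and is nonempty, and $(A_J^\perp)^\perp\cap(\range(M^\top))^\perp=A_J\cap\ker(M)=\{0\}$, so the collection is strongly regular at every point of its intersection.

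Honestly, there is no real obstacle here: the proposition is bookkeeping. The one spot that rewards care is the parameter translation in the first part --- keeping straight which of $\delta_{2s}$, $\nu_{2s}$, $\mu_{2s}$, $\alpha$ is playing which role, and checking that the half-open interval $[0,\tfrac{\alpha-1}{\alpha})$ matches the constraint $1\le\mu_{2s}/\nu_{2s}<\alpha$ exactly --- together with remembering that it is precisely the automatic bound $\enorm{\M M x}\le\enorm{x}$ that legitimizes the choice $\mu_{2s}=1$.
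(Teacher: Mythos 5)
Your proof is correct and follows essentially the same route as the paper's: the first part is the direct parameter translation the paper dispatches with ``follows directly from the definition'' (your explicit check that $\|\M Mx\|\le\|x\|$ legitimizes $\mu_{2s}=1$ and that $1\le 1/(1-\delta_{2s})<\alpha$ matches $\delta_{2s}\in[0,\tfrac{\alpha-1}{\alpha})$ is exactly the intended bookkeeping), and the second part uses the same observation that $\M M$ is the orthogonal projector onto $\range(M^\top)$ so that $\M Mx=0$ on $A_{2s}$ forces $x=0$, combined with \eqref{eq:AB affine}. The only cosmetic difference is that you argue pointwise on each $A_J$ while the paper first establishes $A_{2s}\cap[\range(M^\top)]^\perp=\{0\}$ and then decomposes via \eqref{sparsedecomp}; the content is identical.
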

\begin{proof}
The first statement follows directly from the definition of the scaled/asymmetric restricted isometry property. 

For the second statement, note that, if $M$ satisfies inequality 
\eqref{upRIP} with $\delta_{2s}\in [0,\frac{\alpha-1}{\alpha})$
for some fixed $s>0$ and $\alpha>1$, then the only element in $A_{2s}$ 
satisfying $\M M x=0$ is $x=0$.  Recall that $\M M$ is the projector onto the space orthogonal to the 
nullspace of $M$, that is, the projector onto the range of $M^\top$.  Thus 
\begin{equation}\label{e:cq}
   A_{2s}\cap [\range(M^\top)]^\perp = \{0\}.
\end{equation}
Here we have 
used the fact that the projection of a point $x$ onto a subspace $\Omega$ is zero if and only if 
$x\in\Omega^\perp$.  Now using the representation for $A_{2s}$ given by \eqref{sparsedecomp}
we have that \eqref{e:cq} is equivalent to 
\begin{equation}\label{e:cq2}
   A_{J}\cap \ker(M^\top) = \{0\}\quad\mbox{for all }J\in\mathcal{J}_{2s}.
\end{equation}
But by \eqref{eq:AB affine} this is equivalent to the strong regularity of  
$\left(A_{J}^\perp, \range(M^\top)\right)$ for all $J\in\mathcal{J}_{2s}.$
\end{proof}

We are now ready to prove one of our main new results.
\begin{thm}[global convergence of alternating projections II]\label{t:APII}
For a fixed $s>0$, let the matrix $\M M$ satisfy \eqref{upRIP}
with $\delta_{2s}\in [0, \frac{1}{2})$ for $M$ in the definition of the affine set 
$B$ given by \eqref{e:B} .
 Then $\ B\cap\sparse$ is a singleton and 
for any initial value $x^0\in\mathbb{R}^n$ the sequence $(x^k)_{k\in\N}$ 
generated by alternating projections (Definition \ref{AP})
converges to $\ B\cap\sparse$ 
with $\dist{x^k}{B}\to 0 $ as $k\to \infty$ at a linear rate with constant 
bounded by $\sqrt{\frac{\delta_{2s}}{1-\delta_{2s}}}$.
\end{thm}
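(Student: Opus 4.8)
The plan is to adapt the proof of Corollary~\ref{t:API}: we again recognise alternating projections as a projected gradient iteration, but now for the ``distance to $B$'' objective $g$ of \eqref{eq:g} rather than for the residual $f(x)=\tfrac12\enorm{Mx-p}^2$, and it is precisely this change of objective that removes the need for a condition such as $MM^\top=\Id$. First I would compute $\nabla g$. Since $g(x)=\tfrac12\dist{x}{B}^2=\tfrac12\enorm{x-P_Bx}^2$ and, by \eqref{eq:P_A}, $x-P_Bx=\M(Mx-p)$, we have $g(x)=\tfrac12(Mx-p)^\top(MM^\top)^{-1}(Mx-p)$, a smooth convex quadratic, with
\[
   \nabla g(x)=M^\top(MM^\top)^{-1}(Mx-p)=\M(Mx-p)=x-P_Bx .
\]
Hence the projected gradient step \eqref{eq:PG} with $A=\sparse$ and unit step length $\tau=1$ is
\[
   T_{PG}(x;1)=P_{\sparse}\!\left(x-\nabla g(x)\right)=P_{\sparse}\!\left(x-(x-P_Bx)\right)=P_{\sparse}P_B x=T_{AP}x ,
\]
so alternating projections is exactly projected gradients for \eqref{eq:g} with step $\tau=1$.

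Next I would bring $g$ into the form required by Theorem~\ref{thm:BeckTeb}. Writing $g(x)=\tfrac12\enorm{(\M M)x-\M p}^2$, the relevant matrix is $\M M$. By the first assertion of Proposition~\ref{t:strong regularity and RIP}, hypothesis \eqref{upRIP} with $\delta_{2s}\in[0,\tfrac12)$ says precisely that $\M M$ satisfies the scaled/asymmetric restricted isometry property \eqref{eq:SRIP} of order $(2s,2)$ with $\mu_{2s}=1$ and $\nu_{2s}=1-\delta_{2s}$: the upper bound $\enorm{(\M M)x}\le\enorm{x}$ is automatic because $\M M$ is an orthogonal projector, and $\mu_{2s}/\nu_{2s}=1/(1-\delta_{2s})<2$ exactly because $\delta_{2s}<\tfrac12$. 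Moreover the step $\tau=1$ lies in the nonempty interval $[\mu_{2s},2\nu_{2s})=[1,2(1-\delta_{2s}))$, again because $\delta_{2s}<\tfrac12$. Theorem~\ref{thm:BeckTeb}, applied with matrix $\M M$, objective $g$ and $\tau=1$, then gives that the alternating projections iterates converge to the unique minimizer $\point$ of $g$ over $\sparse$, with
\[
   g(x^{k+1})\le\rho\,g(x^k),\qquad \rho=\frac{\tau}{\nu_{2s}}-1=\frac{1}{1-\delta_{2s}}-1=\frac{\delta_{2s}}{1-\delta_{2s}}<1 .
\]
Since $g=\tfrac12\dist{\cdot}{B}^2$, this yields $\dist{x^{k+1}}{B}\le\sqrt{\tfrac{\delta_{2s}}{1-\delta_{2s}}}\;\dist{x^k}{B}\to 0$, the asserted rate. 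Finally $g(\point)=\lim_k g(x^k)=0$ forces $\point\in B$, while $\point\in\sparse$ since the iterates lie in the closed set $\sparse$; thus $\point\in\sparse\cap B$, and as every point of $\sparse\cap B$ is a global minimizer of $g$, uniqueness of $\point$ shows $\sparse\cap B=\{\point\}$ is a singleton. (Singleton-ness can alternatively be read off from \eqref{e:strong reg A_J-M}, since two distinct $s$-sparse solutions would differ by a nonzero element of $A_{2s}\cap\ker M=\{0\}$.)

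The only delicate points are the gradient identity $\nabla g=\Id-P_B$, which hinges on $\M M$ being a \emph{symmetric} idempotent, i.e.\ the orthogonal projector onto $\range(M^\top)$, and the verification that the admissible step-size window $[\mu_{2s},2\nu_{2s})$ is nonempty and contains $\tau=1$ --- this is the single place where the threshold $\delta_{2s}<\tfrac12$ enters. Everything else is the elementary translation between $g$ and $\dist{\cdot}{B}^2$ together with direct appeals to Proposition~\ref{t:strong regularity and RIP} and Theorem~\ref{thm:BeckTeb}, so I do not expect any genuine obstacle here: unlike in the local analysis, the global machinery of \cite{BeckTeb} transfers essentially verbatim once the objective is chosen to be the squared distance to $B$.
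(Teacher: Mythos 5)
Your proposal is correct, and it reaches the same rate constant as the paper, but by a shorter route. Both proofs rest on the same two observations: that $g(x)=\tfrac12\dist{x}{B}^2=\tfrac12\enorm{\M Mx-\M p}^2$ turns the problem into a Beck--Teboulle--type instance with measurement matrix $\M M$, and that alternating projections is exactly the unit-step projected gradient iteration for $g$ (equivalently, $x^{k+1}$ minimizes the surrogate $g(x^k)+\langle x-x^k,\M(Mx^k-p)\rangle+\tfrac12\enorm{x-x^k}^2$ over $\sparse$). Where you part ways with the paper is in how the descent inequality is obtained: you invoke Theorem \ref{thm:BeckTeb} as a black box with $M$ replaced by $\M M$ and $\tau=1\in[\mu_{2s},2\nu_{2s})$, reading off $g(x^{k+1})\leq\frac{\delta_{2s}}{1-\delta_{2s}}\,g(x^k)$ directly, whereas the paper uses Theorem \ref{thm:BeckTeb} only to certify that $\sparse\cap B$ is a singleton and then re-derives the inequality from scratch via the chain \eqref{e:decoder ring}--\eqref{e:Voyager}, exploiting explicitly that $\M M$ is the orthogonal projector onto $\range(M^\top)$ (for the majorization $g(x^{k+1})\leq q(x^{k+1},x^k)$) and \eqref{upRIP} applied to $\overline{x}-x^k\in A_{2s}$ (for the bound on $q(\overline{x},x^k)$). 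Your citation-based route is legitimate --- the paper itself applies Theorem \ref{thm:BeckTeb} to the square, rank-deficient matrix $\M M$, and the SRIP hypothesis is all that theorem uses --- and it buys brevity; the paper's self-contained computation buys transparency about exactly where the threshold $\delta_{2s}<\tfrac12$ and the idempotence of $\M M$ enter, which is presumably why the authors chose it. Your verification that $\nabla g=\Id-P_B$ and your remark that uniqueness can alternatively be seen from $A_{2s}\cap\ker M=\{0\}$ are both sound. The only caveat, shared equally by the paper, is the implicit assumption $\sparse\cap B\neq\emptyset$, without which neither the singleton claim nor the decay of $g$ to zero could hold; this is inherited from Theorem \ref{thm:BeckTeb}.
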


\begin{proof}
From the correspondence 
between \eqref{upRIP} and \eqref{eq:SRIP} in Proposition \ref{t:strong regularity and RIP}, 
we can apply  Theorem \ref{thm:BeckTeb} to the feasibility problem $\mbox{Find } x\in \sparse\cap B^\dagger$,
where $B^\dagger\equiv\{x~|~\M M x = p^\dagger\}$ for $p^\dagger\equiv \M p$.
This establishes that the intersection is a singleton.  
But from \eqref{MoorePenrose} the set $B^\dagger$ is none other than $B$, hence \eqref{upRIP} for $\alpha=2$
implies existence and uniqueness of the intersection $\sparse\cap B$.  

To establish convergence of alternating projections, 
for the iterate $x^k$ define the mapping 
$$q(x,x^k) \equiv g(x^k) + \left\langle x-x^k, \M(Mx^k-p) \right\rangle + \frac{1}{2}\lefn x-x^k\rign_2^2,$$
where $g$ is the objective function defined in \eqref{eq:g}.  
By definition of the projector, the iterate $x^{k+1}$ is a solution to the problem 
$\min \lef q(x,x^k)\ \middle|\ x\in\sparse\rig.$
To see this, recall that, by the definition of the projection,  $g(x^k)=\frac{1}{2}\|x^k-P_B(x^k)\|^2$.
Together with \eqref{eq:P_A} this yields 
\begin{eqnarray}\label{e:turd}
q(x,x^k) &\stackrel{\eqref{eq:P_A}}{=} & \frac{1}{2}\lefn x^k-P_B(x^k)\rign_2^2 + \left\langle x-x^k, x^k-P_B(x^k) \right\rangle  + \frac{1}{2}\lefn x - x^k\rign_2^2  
\nonumber\\ 
&  =  & \frac{1}{2}\lefn x-x^k +  x^k -  P_Bx^k\rign_2^2.
\end{eqnarray}
Now, by definition of the alternating projections sequence, 
$$x^{k+1}\in P_{\sparse}P_B(x^k)=P_{\sparse}\left(x^k - (\Id - P_B)x^k \right),$$ 
which, together with \eqref{e:turd}, yields 
\[
x^{k+1}\in \argmin{\sparse}\left\{\lefn x-\left(x^k - (\Id - P_B)x^k \right)\rign_2^2\right\} = \argmin{\sparse}\{q(x,x^k)\}.   
\]
That is, $x^{k+1}$ is a minimizer of $q(x,x^k)$ in $\sparse$.
On the other hand,  
\begin{align}
 g(x^{k+1}) 
\stackrel{\eqref{eq:P_A}\&\eqref{eq:g}}{=} &~ \frac{1}{2}\lefn \M(Mx^{k+1} - p)\rign_2^2 \nonumber\\
=& ~\frac{1}{2} \lefn \M M(x^{k+1} - x^k) + \M(Mx^k-p)\rign_2^2 \notag \\
 = & ~g(x^k) + \left\langle \M M(x^{k+1}-x^k), \M(Mx^k-p) \right\rangle + \frac{1}{2}\lefn \M M(x^{k+1} - x^k)\rign_2^2 \notag \\
\leq &~ g(x^k) + \left\langle \M M(x^{k+1}-x^k), \M(Mx^k-p) \right\rangle  + \frac{1}{2}\lefn x^{k+1} - x^k\rign_2^2 \notag\\
 \stackrel{\eqref{MoorePenrose}}{=} &~ g(x^k) + \left\langle x^{k+1}-x^k, \M(Mx^k-p) \right\rangle + \frac{1}{2}\lefn x^{k+1} - x^k\rign_2^2 \notag\\
 = &~ q(x^{k+1},x^k), \label{e:decoder ring}
\end{align}
where the inequality in the middle follows from the fact that $\M M$ is an orthogonal projection onto a subspace. 
Hence  $g(x^{k+1}) \leq q(x^{k+1},x^k)$. But since $x^{k+1}$ minimizes $q(x,x^k)$ over $\sparse$,
we know that, for $\{\overline{x}\}= B\cap \sparse$,
\begin{equation}\label{e:pufferfish}
q(x^{k+1},x^k)\leq q(\overline{x},x^k).
\end{equation}
Moreover, by  assumption \eqref{upRIP} we have 
\begin{align}
q(\overline{x},x^k) = & g(x^k) +  \left\langle \overline{x}-x^k, \M(Mx^k-p) \right\rangle + \frac{1}{2}\lefn \overline{x}-x^k\rign_2^2  \nonumber\\
\stackrel{\eqref{upRIP}}{\leq}  & g(x^k) +  \left\langle \overline{x}-x^k, \M(Mx^k-p) \right\rangle + \frac{1}{2\left(1-\delta_{2s}\right)}\lefn \M M(\overline{x}-x^k)\rign_2^2  \nonumber\\
= & g(x^k) +  \left\langle \overline{x}-x^k, \M(Mx^k-p) \right\rangle + \frac{1}{2\left(1-\delta_{2s}\right)}\lefn \M (p-Mx^k)\rign_2^2  \nonumber\\
\stackrel{\eqref{eq:P_A}\&\eqref{eq:g}}{=} & \left(1+\frac{1}{1-\delta_{2s}} \right)g(x^k) +  \left\langle \overline{x}-x^k, \M(Mx^k-p) \right\rangle  \nonumber\\
\stackrel{\eqref{MoorePenrose}}{=} & \left(1 +\frac{1}{1-\delta_{2s}} \right)g(x^k) +  \left\langle \M M(\overline{x}-x^k), \M(Mx^k-p) \right\rangle  \nonumber\\
\stackrel{\eqref{eq:P_A}\&\eqref{eq:g}}{=} & \left(1+\frac{1}{1-\delta_{2s}} \right)g(x^k) - 2 g(x^k) \nonumber\\
= &\frac{\delta_{2s}}{1-\delta_{2s}}g(x^k)\label{e:Voyager}.
\end{align}
When $0\leq\delta_{2s}<\frac{1}{2}$, as assumed, we have 
$0\leq  \frac{\delta_{2s}}{1-\delta_{2s}}<1$.  
Inequalities \eqref{e:decoder ring}-\eqref{e:Voyager} then imply that $\dist{x^k}{B}\to 0$ 
 as $k\to\infty$ at a linear rate for $0\leq\delta_{2s}<\frac{1}{2}$, with constant bounded above by  
$ \sqrt{\frac{\delta_{2s}}{1-\delta_{2s}}}<1.$
Since the iterates $x^{k}$ lie in $\sparse$ this proves convergence 
of the iterates to the intersection $\sparse\cap B$, that is, to $\point$,  as claimed.    
\end{proof}

\section{Sparse Feasibility with an Affine Constraint: local linear convergence of Douglas-Rachford}
We turn our attention now to the Douglas-Rachford algorithm. 
First we present a result that could be discouraging since we show that the Douglas-Rachford operator has a 
set of fixed points that is too large in most interesting cases.
However we show that this set of fixed points has a nice structure guaranteeing 
local linear convergence of the iterates and thus convergence of the shadows to a solution of  
\eqref{eq:feasibility}.  We use the results obtained in \cite{HesseLuke12} in our proofs.
Linear convergence of Douglas-Rachford for the case of $\ell_1$ minimization with an affine constraint 
was obtained by Demanet and Zhang in 
\cite{DemanetZhang13}.  In \cite{BBPW, DemanetZhang13} the authors show that the rate of 
convergence of Douglas-Rachford applied to {\em linear} feasibility problems is the cosine of the Friedrichs 
angle between the subspaces.

\subsection{Fixed point sets of Douglas-Rachford} 
In contrast to  the alternating projections  algorithm, the iterates of the Douglas-Rachford algorithm are not actually the points of interest - it is 
rather the shadows of the iterates that are relevant.  This results in an occasional incongruence between the fixed points
of Douglas-Rachford and the intersection that we seek.  Indeed, this mismatch occurs in the most interesting cases of the affine sparse 
feasibility problem as we show next. 
\begin{thm}\label{t:AAR convergence}
Let ${\sparse}$ and $B$ be defined by (\ref{e:A}) and (\ref{e:B}) and suppose there exists a point $\point\in {\sparse}\cap B$ with 
$\|\point\|_0=s$.  If $s< \textup{rank}(M)$, then 
on all open neighborhoods $\mathcal N$ of $\point\in {\sparse}\cap B$ there exist fixed points $z\in \textup{Fix}~ T_{DR}$ 
with $z\notin {\sparse}\cap B$.
\end{thm}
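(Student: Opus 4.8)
The plan is to exhibit, arbitrarily close to $\point$, a point $z$ that is fixed by $T_{DR}$ but whose components do not lie in $\sparse\cap B$. Since $\|\point\|_0=s$, by Lemma~\ref{t:ugh} there is a unique index set $J=I(\point)\in\mathcal{J}_s$ such that $\sparse$ coincides with $A_J$ on a ball $\Ball_\delta(\point)$ for $\delta<\min\{|\point_i|:\point_i\neq 0\}$; hence locally the Douglas--Rachford operator for $(\sparse,B)$ agrees with the Douglas--Rachford operator for the pair of \emph{affine} subspaces $(A_J,B)$ on that ball. For affine subspaces it is classical (and follows from the formula $R_{A_J}R_B$ being affine) that the fixed-point set of $T_{DR}$ for $(A_J,B)$ is $(A_J\cap B)+\bigl(A_J^\perp\cap \ker(M)^\perp\bigr)$, or more precisely $\Fix T_{DR}=(A_J\cap B)\oplus N$ where $N$ is a subspace determined by the relative position of $A_J$ and $B$; the shadows $P_B$ of these fixed points are exactly $A_J\cap B$, but the fixed points themselves need not lie in $A_J\cap B$ unless $N=\{0\}$.

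The key step is therefore to show that this subspace $N$ of ``extra'' fixed points is nontrivial precisely under the hypothesis $s<\mathrm{rank}(M)$. I would argue as follows: $N=\{0\}$ (equivalently, all local fixed points of $T_{DR}$ lie in $\sparse\cap B$) if and only if the collection $(A_J,B)$ is strongly regular, which by \eqref{eq:AB affine} is equivalent to $A_J^\perp\cap B^\perp=\{0\}$, i.e. $A_J^\perp\cap (\ker M)=\{0\}$ since $B^\perp=(\ker M)^\perp{}^\perp=\ker M^{\perp\perp}$... more carefully, $B^\perp = (\ker M)^\perp = \mathrm{range}(M^\top)$, so strong regularity of $(A_J,B)$ is $A_J^\perp\cap\mathrm{range}(M^\top)=\{0\}$. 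Now $\dim A_J^\perp = n-s$ and $\dim\mathrm{range}(M^\top)=\mathrm{rank}(M)$, so $\dim A_J^\perp+\dim\mathrm{range}(M^\top) = n-s+\mathrm{rank}(M) > n$ exactly when $s<\mathrm{rank}(M)$, which forces the intersection $A_J^\perp\cap\mathrm{range}(M^\top)$ to contain a nonzero vector $w$. Then for small $t$ the point $z=\point+tw$ lies in $\Ball_\delta(\point)$, and one checks directly from the affine Douglas--Rachford formula that $z\in\Fix T_{DR}$: indeed $w\in A_J^\perp$ gives $P_{A_J}z=\point$, while $w\in \mathrm{range}(M^\top)=(\ker M)^\perp$ and $\point\in B$ give $P_B z=\point$ as well up to the component of $w$, so that $R_{A_J}R_B z=z$; and $z\notin A_J$ (hence $z\notin\sparse$ locally, hence $z\notin\sparse\cap B$) because $w\neq 0$ and $w\perp A_J$ means $z$ has nonzero components outside $J$.

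I would organize the write-up in three short steps: (1) reduce to the affine pair $(A_J,B)$ via Lemma~\ref{t:ugh}, noting $T_{DR}$ is locally affine there; (2) a dimension count giving a nonzero $w\in A_J^\perp\cap\mathrm{range}(M^\top)$ under $s<\mathrm{rank}(M)$; (3) verify $z=\point+tw\in\Fix T_{DR}\setminus(\sparse\cap B)$ for $|t|$ small, and observe $t$ can be taken small enough that $z$ lies in any prescribed neighborhood $\mathcal N$. The main obstacle I anticipate is step (3): one must be careful that $z=\point+tw$ is genuinely fixed by the \emph{nonconvex} operator $T_{DR}$ for $(\sparse,B)$ and not merely by the affine surrogate—this requires knowing that the relevant projections $P_\sparse(R_B z)$ etc.\ land in $\Ball_\delta(\point)$ so that $P_\sparse$ may be replaced by $P_{A_J}$ throughout, which is exactly what the stay-in-the-ball estimates of Lemma~\ref{t:stay} furnish; and one must double-check the direction of the reflection composition (the operator is $\tfrac12(R_{\Omega_1}R_{\Omega_2}+\Id)$ with $\Omega_1=\sparse$, $\Omega_2=B$) so that the computation of $R_\sparse R_B z$ is done in the correct order. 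A secondary subtlety is confirming $z\notin B$ is not needed—only $z\notin\sparse\cap B$—so it suffices that $z\notin\sparse$, which is immediate from $w\notin A_J$.
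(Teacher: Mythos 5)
Your proposal is correct and follows essentially the same route as the paper: a local reduction of $T_{DR}$ to the affine pair $(A_J,B)$ via Lemma~\ref{t:ugh} and Lemma~\ref{t:stay}, the fixed-point characterization $\Fix T_{DR}=(A_J\cap B)+(A_J^\perp\cap B^\perp)$, and the dimension count $\dim A_J^\perp+\dim\range(M^\top)=n-s+\textup{rank}(M)>n$. The only difference is cosmetic: you explicitly exhibit the fixed points $z=\point+tw$ and verify $R_{A_J}R_Bz=z$ by hand, whereas the paper simply cites the fixed-point set formula from Bauschke--Combettes--Luke and observes that the summand $A_J^\perp\cap B^\perp$ is nontrivial.
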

\begin{proof}
Let $\point\in \sparse\cap B$ with $\|\point\|_0=s$ and set $\delta< \min\{|\point_j| ~|~ \point_j\neq 0\}$.  
By Lemma \ref{t:ugh} we have $\sparse\cap B\cap\Ball_{\delta/2}(\point)=A_J\cap B\cap\Ball_{\delta/2}(\point)$
for a unique $J\equiv I(\point)\in \mathcal{J}_s$.  Thus on 
the neighborhood $\Ball_{\delta/2}(\point)$ the feasibility problems 
$\textup{Find}\,x\in A_J\cap B$, and $\textup{Find}\,x\in \sparse\cap B$
have the same set of solutions.  
We consider the Douglas-Rachford operators applied to these two feasibility problems, for which we  
introduce the following notation:  $T_J\equiv \tfrac12\left(R_{A_J}R_B+\Id\right)$ and 
$T_s\equiv \tfrac12\left(R_{\sparse}R_B+\Id\right)$.  Our proof strategy is to show first that the operators 
$T_J$ and $T_s$ restricted to  $\Ball_{\delta/2}(\point)$ are identical, hence their fixed point sets intersected with 
$\Ball_{\delta/2}(\point)$ are identical.
We then show that under the assumption $s<\textup{rank}\,(M)$ the set $\textup{Fix}\,T_J$ is strictly 
larger than the intersection $A_J\cap B$, hence completing the proof.  

To show that the operators $T_J$ and $T_s$ applied to points $x\in\Ball_{\delta/2}(\point)$ are identical, note that, by 
Lemma \ref{t:stay}\eqref{t:stay2} and 
\eqref{t:stay4}, for all $x\in\Ball_{\delta/2}(\point)$ we have $P_{\sparse}(x)\subset \Ball_{\delta/2}(\point)$ and 
$R_{\sparse}(x)\subset \Ball_{\delta/2}(\point)$.  Moreover by Lemma \ref{t:ugh}, since $\|\point\|_0=s$ we have  
$\sparse\cap \Ball_{\delta}(\point)=A_J\cap \Ball_{\delta}(\point)$.  Thus for all $x\in\Ball_{\delta/2}(\point)$ we have
$P_{\sparse}(x) = P_{A_J}(x)\in \Ball_{\delta/2}(\point)$ and 
$R_{\sparse}(x) = R_{A_J}(x)\in \Ball_{\delta/2}(\point)$.
Also by Lemma \ref{t:stay}, 
$R_Bx\in \Ball_{\delta/2}(\point)$ for $x\in\Ball_{\delta/2}(\point)$.  Altogether, this yields
\begin{equation}\label{eq:Garden}
T_{s}x=\tfrac12\left(R_{\sparse}R_B+\Id\right)x = \tfrac12\left(R_{A_J}R_B+\Id\right)x=T_{J}x \in \Ball_{\delta/2}(\point)  
\end{equation}
for all  $x\in\Ball_{\delta/2}(\point)$.
Hence the operators $T_s$ and $T_J$  and their fixed point sets coincide on $\Ball_{\delta/2}(\point)$.

We derive next an explicit characterization of $\textup{Fix}\,T_J$.  
By \cite[Corollary 3.9]{BauComLuke} and \eqref{eq:AB affine} we have:
\begin{equation}\label{eq:fixedpoints} 
 \begin{array}{rl}
\textup{Fix}\, T_{J} & =(A_J\cap B)+N_{A_J-B}(0)\\
& =(A_J\cap B)+(N_{A_J}(\point)\cap -N_B(\point)) \\
& = (A_J\cap B)+\left(A_J^\perp\cap B^\perp\right).
\end{array}
\end{equation}
The following equivalences show that $A_J^\perp\cap B^\perp$ is nontrivial if $s< \textup{rank}\,(M)$.  Indeed, 

\begin{eqnarray} 
&   \textup{rank}(M) &> s \nonumber\\
\Leftrightarrow &\   \dim(\ker(M)^\perp) &> s \nonumber\\
\Leftrightarrow &\   n-s + \dim(\ker(M)^\perp) &> n \nonumber\\
\Leftrightarrow &  \dim(A_J^\perp) + \dim(\ker(M)^\perp) &> n  \nonumber\\ 
\Leftrightarrow & A_J^\perp\cap B^\perp&\neq \{0\}.\label{eq:Memoryhouse}
\end{eqnarray}
In other words, 
$\textup{Fix}\, T_J $ contains elements from the intersection $A_J\cap B$ and the nontrivial subspace
$A_J^\perp\cap B^\perp$.  This completes the proof.
\end{proof}
\begin{remark}
The inequality \eqref{eq:Memoryhouse} shows that if $\textup{rank}(M) > s$ then the intersection $A_J\cap B$ is {\em not}
{\em strongly regular}, or in other words, if $A_J\cap B$ is strongly regular then $\textup{rank}(M) \leq s$.   
This was also observed in \cite[Remark 3.17]{BLPWII} using tangent cones and 
{\em transversality}.  The simple meaning of these results is that if the sparsity of a feasible point is less 
than the rank of the measurement matrix (the only interesting case in sparse signal recovery) then, since locally
the affine feasibility problem is indistinguishable from simple linear feasibility at points $\point\in\sparse$ with $\|\point\|_0=s$, 
by Lemma \ref{convergence:AAR} the 
Douglas-Rachford algorithm may fail to converge {\em to the intersection} on all balls around a feasible point.  
As we noted in the beginning of this section, however, it is not the fixed points of Douglas-Rachford themselves but rather 
their shadows that are of interest.  This leads to positive convergence results detailed in the next section.
\end{remark}

\noindent 

\subsection{Linear convergence of Douglas-Rachford}
We begin with an auxiliary result that the Douglas-Rachford iteration applied to {\em linear subspaces} converges to its set of fixed points 
with linear rate.  As the sparse feasibility problem reduces locally to finding the intersection of (affine) subspaces, by a translation 
to the origin, results for the case of subspaces will  
yield local linear convergence of Douglas-Rachford to fixed points associated with points $\point\in \sparse\cap B$ such that 
$\|\point\|_0=s$.   Convergence of Douglas-Rachford for convex sets with nonempty intersection was proved first 
by Lions and Mercier \cite{LionsMercier}, but without rate.  (They do, however, achieve linear rates of convergence under strong assumptions
that are not satisfied for convex feasibility.)  As surprising as it may seem, results on the {\em rate} of convergence
of this algorithm even for the simple case of affine subspaces are very recent.  Our proof, based on \cite{HesseLuke12}, 
is one of several independent results (with very different proofs) that we are aware of which have appeared in the 
last several months \cite{BBPW, DemanetZhang13}. 

\subsubsection{The linear case}
The idea of our proof is to show that the set of fixed points of the Douglas-Rachford algorithm applied to the subspaces $A$ and $B$ 
can always be written as the intersection of different subspaces $\widetilde{A}$ and $\widetilde{B}$, 
the collection of which is {\em strongly regular}.  We then show that the iterates of the Douglas-Rachford algorithm applied to the 
subspaces $A$ and $B$ are identical to those of the Douglas-Rachford algorithm applied to the subspaces $\widetilde{A}$ and $\widetilde{B}$.  
Linear convergence of Douglas-Rachford then follows directly from Lemma \ref{convergence:AAR}.

We recall that the set of fixed points of Douglas-Rachford in the case of two linear subspaces $\A\subset\E$ and $\B\subset \E$ is by \cite[Corollary 3.9]{BauComLuke} 
and \eqref{eq:fixedpoints} equal to 
$$\textup{Fix}\, T_{DR} = (\A\cap \B)+\left(\A^\perp\cap \B^\perp\right)$$
for $T_{DR}\equiv\tfrac12\left(R_AR_B+\Id\right)$.  
For two linear subspaces $\A\subset\E$ and $\B\subset \E$  define the enlargements 
$\widetilde{\A}:=\A+ \left(\A^\perp\cap\B^\perp\right)$ and $\widetilde{\B}:= \B+ \left(\A^\perp\cap\B^\perp\right)$.
 By definition of the Minkowski sum these enlargements are given by
\begin{subequations}\label{eq:ABtilde}
\begin{eqnarray} 
\widetilde{\A} &=& \left\lbrace a + n\ \middle|\ a\in\A, n\in \A^\perp\cap\B^\perp \right\rbrace \\   
~\mbox{ and } ~\widetilde{\B} &=& \left\lbrace b + n\ \middle|\ b\in\B, n\in \A^\perp\cap\B^\perp \right\rbrace.
\end{eqnarray}
\end{subequations}
The enlargements $\widetilde{\A}$ and $\widetilde{B}$ are themselves subspaces of $\E$ as the 
Minkowski sum of subspaces. 
\begin{lemma}\label{t:Sarajevo}
The equation
$$C :=\left(\A+ \left(\A^\perp\cap\B^\perp\right)\right)^\perp\cap \left(\B+ \left(\A^\perp\cap\B^\perp\right)\right)^\perp = \{0\}$$
holds for any linear subspaces $\A$ and $\B$ of $\E$, and hence the collection $(\widetilde{A},\widetilde{B})$ is strongly regular for 
any linear subspaces $A$ and $B$.  
\end{lemma}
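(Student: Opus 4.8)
The plan is to reduce everything to two elementary facts about orthogonal complements of subspaces: for any linear subspaces $U,V\subset\E$ one has $(U+V)^\perp=U^\perp\cap V^\perp$, and for any subspace $W\subset\E$ one has $W\cap W^\perp=\{0\}$. Both follow in one line from the definition of the orthogonal complement (the first by testing $x\perp(U+V)$ against the elements $u+0$ and $0+v$; the second from $\langle w,w\rangle=0\Rightarrow w=0$). I would record these two facts up front.

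Next I would abbreviate $W\equiv A^\perp\cap B^\perp$, so that $\widetilde A=A+W$ and $\widetilde B=B+W$, and apply the perp-of-sum identity to each enlargement:
\[
\widetilde A^\perp=(A+W)^\perp=A^\perp\cap W^\perp,\qquad \widetilde B^\perp=(B+W)^\perp=B^\perp\cap W^\perp.
\]
Intersecting these and using $A^\perp\cap B^\perp=W$ then gives
\[
C=\widetilde A^\perp\cap\widetilde B^\perp=(A^\perp\cap B^\perp)\cap W^\perp=W\cap W^\perp=\{0\},
\]
which is the first assertion. For the strong-regularity claim I would invoke \eqref{eq:AB affine}: since $\widetilde A$ and $\widetilde B$ are subspaces (Minkowski sums of subspaces, as already noted just before the lemma), strong regularity of the collection $(\widetilde A,\widetilde B)$ is equivalent to $\widetilde A^\perp\cap\widetilde B^\perp=\{0\}$ together with $\widetilde A\cap\widetilde B\neq\emptyset$. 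The former is exactly $C=\{0\}$ just proved, and the latter is automatic because $0\in\widetilde A\cap\widetilde B$ (both are linear subspaces). Hence $(\widetilde A,\widetilde B)$ is strongly regular for arbitrary linear subspaces $A$ and $B$.

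I do not anticipate a genuine obstacle here. The only points requiring a little care are applying the identity $(U+V)^\perp=U^\perp\cap V^\perp$ to the \emph{Minkowski sum} $A+W$ rather than to a union, and confirming that $\widetilde A$ and $\widetilde B$ are indeed subspaces so that \eqref{eq:AB affine} is applicable — both of which are already settled by the discussion preceding the lemma statement.
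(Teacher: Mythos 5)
Your proposal is correct and is essentially the paper's own argument in a more compact form: the paper takes an arbitrary $v\in C$ and shows elementwise that $v\in A^\perp\cap B^\perp$ while also being orthogonal to $A^\perp\cap B^\perp$, which is exactly your identity $C=W\cap W^\perp=\{0\}$ with $W=A^\perp\cap B^\perp$, obtained there without explicitly invoking $(U+V)^\perp=U^\perp\cap V^\perp$. Your handling of the strong-regularity claim via \eqref{eq:AB affine} and $0\in\widetilde{A}\cap\widetilde{B}$ is also fine (the paper leaves that step implicit).
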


\begin{proof}
Let $v$ be an element of $C$. 
Because
$C =\widetilde{A}^\perp\cap\widetilde{B}^\perp$,
 we know that 
\begin{equation}
\left\langle v, \widetilde{a}\right\rangle = \left\langle v, \widetilde{b}\right\rangle = 0\quad \textup{for all}\ \widetilde{a}\in\widetilde{A}, \widetilde{b}\in\widetilde{B}.
\end{equation}
Further, since $\A\subset\widetilde{\A}$ and $\B\subset\widetilde{\B}$ we have 
\begin{equation}
\left\langle v, a\right\rangle = \left\langle v, b\right\rangle = 0\quad \textup{for all}\ a\in\A, b\in\B.
\end{equation}
In other words, $v\in\A^\perp$ and $v\in\B^\perp$, so $v\in\A^\perp\cap\B^\perp$.
On the other hand, $\A^\perp\cap\B^\perp\subset\widetilde{A}$ and $\A^\perp\cap\B^\perp\subset\widetilde{\B}$, so we similarly have
\begin{equation}
\left\langle v, n\right\rangle =  0\quad \textup{for all}\ n\in\A^\perp\cap\B^\perp,
\end{equation}
because $A$ and $B$ are subspaces and $v\in C$. 
 Hence  $v$ is also an element of $\left(\A^\perp\cap\B^\perp\right)^\perp$.
We conclude that $v$ can only be zero.
\end{proof}

\begin{lemma}\label{thm:refl}
Let $\A$ and $\B$ be linear subspaces and let $\widetilde{\A}$ and $\widetilde{B}$ be their corresponding enlargements
defined by \eqref{eq:ABtilde}.  
\begin{enumerate}[(i)]
\item\label{thm:refl1} $R_{\A}d=-d$ for all $d\in A^\perp$.
   \item\label{thm:refl2} $R_{\A}x = R_{\widetilde{\A}}x$  for all $x\in\A+\B$.
\item\label{thm:refl3} $R_{\widetilde{\B}}a\in A+B$ for all $a\in A$.
\item\label{thm:refl4} $R_{\widetilde{\A}}R_{\widetilde{\B}}x = R_{\A}R_{\B}x$ for all $x\in\R^n$.
\item\label{thm:refl5} For any $x\in\R^n$ the following equality holds:
$$\frac{1}{2}\left( R_{\widetilde{\A}}R_{\widetilde{\B}}+\Id\right) x = \frac{1}{2}\left( R_{\A}R_{\B}+\Id \right) x.$$
\end{enumerate}
\end{lemma}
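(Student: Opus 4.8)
The backbone of the argument is the observation that $N:=\A^\perp\cap\B^\perp$ is contained in both $\A^\perp$ and $\B^\perp$, so the enlargements split as orthogonal direct sums $\widetilde{\A}=\A\oplus N$ and $\widetilde{\B}=\B\oplus N$; consequently $P_{\widetilde{\A}}=P_\A+P_N$ and $P_{\widetilde{\B}}=P_\B+P_N$, being sums of projectors onto mutually orthogonal subspaces. A second elementary fact I will use repeatedly is $\E=(\A+\B)\oplus N$, since $(\A+\B)^\perp=\A^\perp\cap\B^\perp=N$. Granting these, parts~(i)--(iii) are short computations. For~(i): $P_\A d=0$ for $d\in\A^\perp$, so $R_\A d=2P_\A d-d=-d$. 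For~(ii): writing $x=a+b$ with $a\in\A$, $b\in\B$ gives $P_N x=P_N a+P_N b=0$ because $a\perp N$ and $b\perp N$, hence $P_{\widetilde{\A}}x=P_\A x+P_N x=P_\A x$ and therefore $R_{\widetilde{\A}}x=R_\A x$. For~(iii): for $a\in\A$ one has $P_{\widetilde{\B}}a=P_\B a+P_N a=P_\B a\in\B$, whence $R_{\widetilde{\B}}a=2P_\B a-a\in\B+\A$.

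For~(iv) the plan is to exploit linearity: reflectors across subspaces are linear maps, so both $R_{\widetilde{\A}}R_{\widetilde{\B}}$ and $R_\A R_\B$ are linear, and it suffices to check that they agree on the spanning set $(\A+\B)\cup N$ (which is legitimate because $\E=(\A+\B)+N$). On $N$: for $n\in N$ we have $P_\A n=P_\B n=0$ and $P_N n=n$, so $R_{\widetilde{\B}}n=n$ and $R_{\widetilde{\A}}n=n$, while $R_\B n=-n$ and $R_\A(-n)=n$; thus both compositions act as $\Id$ on $N$. On $\A+\B$: since the construction of the enlargements is symmetric in $\A$ and $\B$, part~(ii) also gives $R_{\widetilde{\B}}y=R_\B y$ for every $y\in\A+\B$; moreover $R_\B y=2P_\B y-y\in\A+\B$ because $P_\B y\in\B$, so~(ii) applies once more to yield $R_{\widetilde{\A}}(R_\B y)=R_\A(R_\B y)$, and chaining gives $R_{\widetilde{\A}}R_{\widetilde{\B}}y=R_\A R_\B y$. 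Linearity then extends the identity to all of $\E$. Part~(v) is immediate: apply $\tfrac12(\,\cdot\,+\Id)$ to both sides of~(iv).

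I do not expect a genuine obstacle here; the content is elementary, and the only points that need a moment's care are (a)~confirming that $\A+N$ and $\B+N$ really are \emph{orthogonal} direct sums, which is what licenses the additive splitting of $P_{\widetilde{\A}}$ and $P_{\widetilde{\B}}$, and (b)~the reduction in~(iv) to verifying equality of two linear operators on the spanning --- not linearly independent --- set $(\A+\B)\cup N$, valid precisely because both operators are linear. If one prefers to avoid the spanning-set argument in~(iv), an equally short alternative is to write a general $x=y+n$ with $y\in\A+\B$ and $n\in N$ and track the $N$-component explicitly: one computes $R_\B x=R_\B y-n$ and $R_{\widetilde{\B}}x=R_\B y+n$, and hence $R_\A R_\B x=R_\A(R_\B y)+n=R_{\widetilde{\A}}R_{\widetilde{\B}}x$.
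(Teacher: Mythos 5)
Your proof is correct and follows essentially the same route as the paper's: both rest on the identification $\A^\perp\cap\B^\perp=(\A+\B)^\perp$, the orthogonal splitting of the projector onto the enlargement (the paper cites this as a known fact about sums of orthogonal subspaces), and a decomposition $x=a+b+d$ together with linearity to reduce part~(iv) to the pieces handled in (i)--(iii). The only differences are cosmetic: your (iii) is a one-line projector computation where the paper decomposes $a=P_\B a+(a-P_\B a)$, and your (iv) phrases the reduction as agreement of two linear maps on a spanning set rather than writing out the chain of equalities.
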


\begin{proof}
To prove \eqref{thm:refl1}, let $d\in \A^\perp$ be arbitrary. The 
projection $P_{\A}d$ of $d$ onto $\A$ is the orthogonal projection onto $\A$. The orthogonal projection 
of $d\in \A^\perp$ is the zero vector. This means that $R_{\A}d = (2P_{\A}-\Id)d = -d$.

To show  \eqref{thm:refl2}\footnote{This proof is a simplification of our original proof suggested by an anonymous
referee.} note that $(A^\perp\cap B^\perp) = (A+B)^\perp$ hence $\widetilde A = A + (A+B)^\perp$. 
Now by \cite[Proposition 2.6]{BCL4}, $P_{A + (A+B)^\perp} = P_A + P_{(A+B)^\perp}$.  Hence
for all $x\in\A+\B$, $P_{\widetilde A}x=P_Ax$ and, consequently, $R_{\widetilde A}x=R_Ax$, as claimed.

To prove \eqref{thm:refl3}, let $a\in\A$ and thus $a\in\A+\B$. We note that by \eqref{thm:refl2} with $A$ replaced by $B$
we have $R_{\B}a = R_{\widetilde{\B}}a$. Write $a$ as a sum $b + v$ where $b = P_Ba$ and $v = a-P_Ba$.
We note that $v\in \A+\B$ and so $-v\in \A+\B$. From \eqref{thm:refl1} we conclude, since $\A$ in
\eqref{thm:refl1} can be replaced by $\B$ and $v\in\B^\perp$, that $R_{\B}v = -v$.
Since $b\in\B$, we have $R_{\B}b = 2P_{\B}b-b = b$ and so 
\begin{equation}
R_{\widetilde{\B}}a = R_{\B}a = R_{\B}b + R_{\B}v = b-v\in \A+\B.
\end{equation}

To see \eqref{thm:refl4} let $x\in\R^n$ be arbitrary. 
Define $D:= \A^\perp\cap\B^\perp$. 
Then we can write as $x= a + b + d$ with $a\in A$, $b\in B$ and $d\in D$. 
This expression does not have to be unique since $\A$ and $\B$ may have a nontrivial intersection.
In any case, we have the identity $\langle b,d\rangle = \langle a, d\rangle=0$.
Since $\A$ and $\B$ are linear subspaces, the Douglas-Rachford operator is a linear mapping which together 
with parts \eqref{thm:refl1}-\eqref{thm:refl3} of this lemma yields
\begin{equation} 
\begin{array}{rl}
R_{\A}R_{\B}x & = R_{\A}\left(R_{\B}a + R_{\B}b + R_{\B}d\right) \\
& \stackrel{\eqref{thm:refl1}.}{=} R_{\A}\left(R_{\B}a + b -d\right) \\
& = R_{\A}R_{\B}a + R_{\A}b +R_{\A}(-d)\\
 &\stackrel{\eqref{thm:refl1}.}{=}R_{\A}R_{\B}a + R_{\A}b +d\\
& \stackrel{\eqref{thm:refl2}.}{=} R_{\A}R_{\widetilde{\B}}a + R_{\widetilde{\A}}b +d\\
&  \stackrel{\eqref{thm:refl2}.-\eqref{thm:refl3}.}{=} R_{\widetilde{\A}}R_{\widetilde{\B}}a + R_{\widetilde{\A}}b +d\\
& \stackrel{d\in\widetilde{\A}}{=} R_{\widetilde{\A}}\left(R_{\widetilde{\B}}a + b +d\right) \\
& \stackrel{b,d\in\widetilde{\B}}{=} R_{\widetilde{\A}}\left(R_{\widetilde{\B}}a + R_{\widetilde{\B}}b + R_{\widetilde{\B}}d\right) \\
& = R_{\widetilde{\A}}R_{\widetilde{\B}}x.
\end{array}\label{eq:CraneWife}
\end{equation} 
This proves \eqref{thm:refl4}.

Statement \eqref{thm:refl5} is an immediate consequence of \eqref{thm:refl4}, which completes the proof.
\end{proof}

\begin{propn}\label{prop:conv}
Let $\A$ and $\B$ be linear subspaces and let $\widetilde{\A}$ and $\widetilde{B}$ be their corresponding enlargements 
defined by \eqref{eq:ABtilde}.  The Douglas-Rachford iteration applied to the enlargements
\begin{equation}\label{alg:DRenl}
x^{k+1}= \widetilde{T}_{DR}x^k := \frac{1}{2}\left(R_{\widetilde{\A}}R_{\widetilde{\B}}+\Id\right)x^k
\end{equation}
converges with linear rate to $\textup{Fix } \widetilde{T}_{DR}$ for any starting point $x^0\in\E$.
\end{propn}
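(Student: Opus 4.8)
The plan is to reduce the claim directly to Lemma~\ref{convergence:AAR} by checking its hypotheses for the pair $(\widetilde{\A},\widetilde{\B})$. First I would note that $\widetilde{\A}$ and $\widetilde{\B}$, being Minkowski sums of linear subspaces, are themselves linear subspaces, hence in particular affine subspaces, and both contain $0$, so that $\widetilde{\A}\cap\widetilde{\B}\neq\emptyset$. This places us squarely in the setting of Lemma~\ref{convergence:AAR}, which treats Douglas-Rachford on two affine subspaces with nonempty intersection.

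Next I would invoke Lemma~\ref{t:Sarajevo}, which asserts precisely that $\widetilde{\A}^\perp\cap\widetilde{\B}^\perp=\{0\}$. By the equivalence \eqref{eq:AB affine} for affine subspaces, this is exactly the statement that the collection $(\widetilde{\A},\widetilde{\B})$ is strongly regular. With strong regularity and nonempty intersection established, Lemma~\ref{convergence:AAR} immediately yields that the Douglas-Rachford iteration $x^{k+1}=\widetilde{T}_{DR}x^k$ converges, for every starting point $x^0\in\E$, and that the convergence is linear, with limit set $\widetilde{\A}\cap\widetilde{\B}$.

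It then remains only to identify $\widetilde{\A}\cap\widetilde{\B}$ with $\textup{Fix }\widetilde{T}_{DR}$. Applying \cite[Corollary 3.9]{BauComLuke}, exactly as was done in \eqref{eq:fixedpoints}, but now to the subspaces $\widetilde{\A}$ and $\widetilde{\B}$, gives $\textup{Fix }\widetilde{T}_{DR}=(\widetilde{\A}\cap\widetilde{\B})+(\widetilde{\A}^\perp\cap\widetilde{\B}^\perp)$; Lemma~\ref{t:Sarajevo} collapses the second summand to $\{0\}$, so $\textup{Fix }\widetilde{T}_{DR}=\widetilde{\A}\cap\widetilde{\B}$. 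Substituting this equality into the conclusion of the previous paragraph completes the argument.

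I do not expect a genuine obstacle here: all of the analytic content has been front-loaded into Lemma~\ref{t:Sarajevo} (strong regularity of the enlargements) and Lemma~\ref{convergence:AAR} (linear convergence under strong regularity). The only point requiring care is bookkeeping---ensuring that the hypotheses of Lemma~\ref{convergence:AAR}, namely affine subspaces, nonempty intersection, and strong regularity, are all genuinely satisfied by $\widetilde{\A}$ and $\widetilde{\B}$, and that the fixed-point formula from \cite{BauComLuke} is applied to the pair $(\widetilde{\A},\widetilde{\B})$ rather than to the original pair $(\A,\B)$.
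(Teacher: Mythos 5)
Your proposal is correct and follows essentially the same route as the paper: both invoke Lemma~\ref{t:Sarajevo} to get strong regularity of $(\widetilde{\A},\widetilde{\B})$ and then apply Lemma~\ref{convergence:AAR} to conclude linear convergence to $\widetilde{\A}\cap\widetilde{\B}$. Your extra step identifying $\widetilde{\A}\cap\widetilde{\B}$ with $\textup{Fix}\,\widetilde{T}_{DR}$ via \cite[Corollary 3.9]{BauComLuke} is a useful piece of bookkeeping that the paper leaves implicit, but it does not change the argument.
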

\begin{proof}
By Lemma \ref{t:Sarajevo} we know that the only common element in 
\\ $\left(\A+ \left(\A^\perp\cap\B^\perp\right)\right)^\perp$ and $\left(\B+ \left(\A^\perp\cap\B^\perp\right)\right)^\perp$ is the zero vector. 
By Lemma \ref{convergence:AAR} \cite[Corollary 3.20]{HesseLuke12}	 the sequence
$$\widetilde{x}_{k+1} := \frac{1}{2}\left(R_{\widetilde{\A}}R_{\widetilde{\B}}+\Id\right)\widetilde{x}_k$$
converges linearly to the intersection $\widetilde{\A}\cap\widetilde{\B}$ for any starting point $\widetilde{x}_0\in\R^n$.
\end{proof}

Combining these results we obtain the following theorem confirming {\em linear} convergence of the 
Douglas-Rachford algorithm for subspaces.  Convergence of the Douglas-Rachford algorithm for 
{\em strongly regular} affine subspaces was proved in \cite[Corollary 3.20]{HesseLuke12} as
a special case of a more general result \cite[Theorem 3.18]{HesseLuke12} 
about linear convergence of the Douglas-Rachford algorithm for a strongly regular collection of a
{\em super-regular set} \cite[Definition 4.3]{LLM} and an affine subspace.   
Our result below shows that the iterates of the Douglas-Rachford algorithm for 
{\em linearly regular} affine subspaces (not necessarily strongly regular) converge linearly
to the fixed point set.  
An analysis focused only on the affine case in the recent preprint 
\cite{BBPW} also achieves linear convergence of the Douglas-Rachford algorithm.    
\begin{thm}\label{thm:convDR}
For any two affine subspaces $\A, \B\subset \R^n$ with nonempty intersection, 
the Douglas-Rachford iteration
\begin{equation}\label{alg:DR}
x^{k+1}= T_{DR}x^k := \frac{1}{2}\left(R_{\A}R_{\B}+\Id\right)x^k
\end{equation}
converges for any starting point $x^0$ to a point in the fixed point set with linear rate.
Moreover, $P_{\B}\overline{x}\in\A\cap\B$ for $\overline{x}=\lim_{k\to\infty}x^k$.
\end{thm}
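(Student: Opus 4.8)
The plan is to reduce to the linear case already treated in Proposition~\ref{prop:conv} by translating the intersection to the origin. Fix $q\in \A\cap\B$ (nonempty by hypothesis) and set $\A_0:=\A-q$, $\B_0:=\B-q$; these are linear subspaces and $\A_0\cap\B_0=(\A\cap\B)-q$. Since $y\mapsto y+q$ is an isometric bijection $\A_0\to\A$, one has $P_\A(x)=q+P_{\A_0}(x-q)$, hence $R_\A(x)=q+R_{\A_0}(x-q)$, and likewise for $\B$. Composing, $R_\A R_\B(x)=q+R_{\A_0}R_{\B_0}(x-q)$, and therefore
\[
   \tfrac12\left(R_\A R_\B+\Id\right)x \;=\; q+\tfrac12\left(R_{\A_0}R_{\B_0}+\Id\right)(x-q)\qquad(x\in\R^n).
\]
Consequently, if $(x^k)$ is generated by \eqref{alg:DR}, then $y^k:=x^k-q$ satisfies $y^{k+1}=\tfrac12(R_{\A_0}R_{\B_0}+\Id)y^k$, i.e.\ $(y^k)$ is the Douglas--Rachford iteration for the \emph{linear} subspaces $\A_0,\B_0$ started at $y^0=x^0-q$.

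\textbf{Convergence.} Let $\widetilde{\A}_0,\widetilde{\B}_0$ be the enlargements of $\A_0,\B_0$ from \eqref{eq:ABtilde}. By Lemma~\ref{thm:refl}\eqref{thm:refl5}, $\tfrac12(R_{\A_0}R_{\B_0}+\Id)=\widetilde{T}_{DR}$ as maps on $\R^n$, so $(y^k)$ is simultaneously the Douglas--Rachford iteration for the enlargements. Proposition~\ref{prop:conv} then yields that $(y^k)$ converges with linear rate to some $\overline y\in\textup{Fix}\,\widetilde{T}_{DR}$, and since the two operators coincide pointwise, $\textup{Fix}\,\widetilde{T}_{DR}=\textup{Fix}\,\bigl(\tfrac12(R_{\A_0}R_{\B_0}+\Id)\bigr)$. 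Translating back, $x^k=q+y^k\to\overline x:=q+\overline y$ at the same linear rate; and since $\overline y$ is fixed for the linear operator, the displayed intertwining relation gives $\tfrac12(R_\A R_\B+\Id)\overline x=q+\overline y=\overline x$, so $\overline x\in\textup{Fix}\,T_{DR}$. This proves the first assertion.

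\textbf{The shadow.} For the ``moreover'' statement use the closed form for the fixed point set of Douglas--Rachford on linear subspaces recalled before Lemma~\ref{t:Sarajevo}, namely $\textup{Fix}\,\bigl(\tfrac12(R_{\A_0}R_{\B_0}+\Id)\bigr)=(\A_0\cap\B_0)+(\A_0^\perp\cap\B_0^\perp)$. Write $\overline y=c+d$ with $c\in\A_0\cap\B_0$ and $d\in\A_0^\perp\cap\B_0^\perp\subseteq\B_0^\perp$; then $P_{\B_0}(c)=c$ and $P_{\B_0}(d)=0$, so $P_{\B_0}(\overline y)=c\in\A_0\cap\B_0$. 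As $\B=\B_0+q$ we have $P_\B(x)=q+P_{\B_0}(x-q)$ for every $x$, whence $P_\B(\overline x)=q+P_{\B_0}(\overline y)=q+c\in q+(\A_0\cap\B_0)=\A\cap\B$, using $q\in\A\cap\B$. This completes the proof.

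\textbf{Main obstacle.} Conceptually there is none beyond what Lemma~\ref{thm:refl} and Proposition~\ref{prop:conv} already provide; the one point that must be handled carefully is the first step, i.e.\ verifying that the Douglas--Rachford operator intertwines with translation by a point of $\A\cap\B$, since this is exactly what licenses passing from the affine to the linear setting. Everything downstream is a routine appeal to the cited lemmas and the fixed-point formula.
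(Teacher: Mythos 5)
Your proposal is correct and follows essentially the same route as the paper: translate by a point of $\A\cap\B$ to reduce to linear subspaces, invoke Proposition~\ref{prop:conv} together with Lemma~\ref{thm:refl}\eqref{thm:refl5} to identify the iterates with those for the enlargements, and read off $P_\B\overline{x}\in\A\cap\B$ from the fixed-point formula $(\A_0\cap\B_0)+(\A_0^\perp\cap\B_0^\perp)$. The only cosmetic difference is that you spell out the translation intertwining and the orthogonal decomposition of $\overline y$ explicitly, where the paper says ``without loss of generality'' and cites \cite[Corollary 3.9]{BauComLuke} for the shadow statement.
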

\begin{proof}
Without loss of generality, by translation of the sets $A$ and $B$ by $-\point$ for $\point\in A\cap B$, we consider
the case of subspaces.  
By Proposition \ref{prop:conv} Douglas-Rachford applied to the enlargements 
$\widetilde{\A}=\A+ \left(\A^\perp\cap\B^\perp\right)$ and $\widetilde{\B}= \B+ \left(\A^\perp\cap\B^\perp\right)$, 
namely (\ref{alg:DRenl}),  converges  to the intersection $\widetilde{\A}\cap\widetilde{\B}$ with linear rate for any starting point $x^0\in\E$.
By  \cite[Corollary 3.9]{BauComLuke} and \eqref{eq:AB affine}, the set of  fixed points of the Douglas-Rachford algorithm (\ref{alg:DR}) is
\begin{equation}
\textup{Fix}_{T_{DR}} = \left(\A\cap\B\right)+\left(\A^\perp\cap\B^\perp\right) = \widetilde{A}\cap\widetilde{B},
\end{equation}
where the rightmost equality follows from repeated application of the identity 
$(\Omega_1+\Omega_2)^\perp = (\Omega_1^\perp\cap \Omega_2^\perp)$, 
the definition of set addition and closedness of subspaces under addition.  
By Lemma \ref{thm:refl}\eqref{thm:refl5} the iterates of (\ref{alg:DRenl}) are the same as the iterates of (\ref{alg:DR}). 
So the iterates of the Douglas-Rachford algorithm applied to $A$ and $B$  
converge to a point in the set of its fixed points with linear rate.
Finally, by \cite[Corollary 3.9]{BauComLuke}, $ P_{\B}\overline{x}\in \A\cap\B$ for any $\point\in\Fix T_{DR}$.
\end{proof}

\subsubsection{Douglas-Rachford applied to sparse affine feasibility}
We conclude with an application of the analysis for affine subspaces to the case of affine feasibility with 
a sparsity constraint.  
\begin{thm}\label{t:convergence DR}
Let ${\sparse}$ and $B$ be defined by (\ref{e:A}) and (\ref{e:B}) with nonempty intersection and let $\point\in {\sparse}\cap B$ with 
$\norm{\point}_0 = s$.  
Choose $0<\delta<\min\left\{ |\point_j|~\middle| ~j\in I(\point)\right\}$.
For $x^0\in \Ball_{\delta/2}(\point)$ the corresponding Douglas-Rachford iterates converge with linear rate to $\Fix T_{DR}$.  Moreover, 
for any $\hat{x}\in\Fix T_{DR}\cap\Ball_{\delta/2}(\point)$, we have $P_B\hat{x}\in {\sparse}\cap B$. 
\end{thm}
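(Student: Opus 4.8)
The plan is to reduce the statement to the affine-subspace case handled in Theorem \ref{thm:convDR}, in exactly the way the proof of Theorem \ref{t:AAR convergence} reduces $T_{DR}$ to $T_J$. First I would set $J:=I(\point)\in\mathcal J_s$; since $\|\point\|_0=s$, Lemma \ref{t:ugh} tells us this is the \emph{unique} index set in $\mathcal J_s$ containing $I(\point)$, and $\sparse\cap\Ball_{\delta}(\point)=A_J\cap\Ball_{\delta}(\point)$, with $\point\in A_J\cap B\neq\emptyset$. Introduce the two Douglas-Rachford operators $T_s:=\tfrac12(R_{\sparse}R_B+\Id)$ (this is the operator $T_{DR}$ of the theorem) and $T_J:=\tfrac12(R_{A_J}R_B+\Id)$.

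Second, I would show that $\Ball_{\delta/2}(\point)$ is invariant under $T_s$ and that $T_s$ and $T_J$ agree on it. This is precisely equation \eqref{eq:Garden}: by Lemma \ref{t:stay}\eqref{t:stay3}--\eqref{t:stay4} (used with $b=\point,\ \delta_b=\delta/2$ for $B$, and with $a=\point,\ \delta_a=\delta$ for $\sparse$) we have $R_Bx\in\Ball_{\delta/2}(\point)$ and $R_{\sparse}x\in\Ball_{\delta/2}(\point)$ for every $x\in\Ball_{\delta/2}(\point)$, while by Lemma \ref{t:ugh} $R_{\sparse}$ coincides with $R_{A_J}$ on that ball; hence $T_sx=T_Jx$, and this point lies in $\Ball_{\delta/2}(\point)$ as a convex combination of two points of the convex ball. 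Consequently $\Fix T_s\cap\Ball_{\delta/2}(\point)=\Fix T_J\cap\Ball_{\delta/2}(\point)$, and by induction the $T_s$-iterates and the $T_J$-iterates launched from a common $x^0\in\Ball_{\delta/2}(\point)$ coincide and remain in the closed ball $\Ball_{\delta/2}(\point)$.

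Third, I would invoke Theorem \ref{thm:convDR} for the two affine subspaces $A_J$ and $B$: the $T_J$-iterates from $x^0$ converge with linear rate to some $\hat x\in\Fix T_J$ and $P_B\hat x\in A_J\cap B$. By the previous step these iterates are exactly the Douglas-Rachford iterates of the present theorem, and since they never leave the closed ball $\Ball_{\delta/2}(\point)$, neither does the limit; this gives linear convergence to $\Fix T_{DR}$. For the last assertion, any $\hat x\in\Fix T_{DR}\cap\Ball_{\delta/2}(\point)$ lies in $\Fix T_J$ by the equality of fixed-point sets on the ball, so by \cite[Corollary 3.9]{BauComLuke} (the ingredient underlying the ``moreover'' part of Theorem \ref{thm:convDR}) $P_B\hat x\in A_J\cap B$, and since $A_J\subseteq\sparse$ this yields $P_B\hat x\in\sparse\cap B$.

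I do not expect a genuine obstacle here: the theorem is essentially a corollary of Theorem \ref{thm:convDR} together with the localization provided by Lemmas \ref{t:ugh} and \ref{t:stay}. The only place that needs care is the invariance/agreement step of the second paragraph, where one must check that \emph{every} intermediate point produced by $R_B$, then by $R_{\sparse}$, then by the averaging with $x$, stays inside $\Ball_{\delta/2}(\point)$, so that $R_{\sparse}$ may legitimately be replaced by $R_{A_J}$ at every iteration; but this is exactly what Lemma \ref{t:stay} was designed to guarantee, so the argument goes through directly.
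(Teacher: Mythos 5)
Your proposal is correct and follows essentially the same route as the paper's proof: localize via Lemma \ref{t:ugh} to the unique subspace $A_J$ with $J=I(\point)$, use Lemma \ref{t:stay} to establish the invariance of $\Ball_{\delta/2}(\point)$ and the identity $T_s=T_J$ there (equation \eqref{eq:Garden}), and then invoke Theorem \ref{thm:convDR} for the affine pair $(A_J,B)$ to get linear convergence and the shadow property $P_B\hat{x}\in A_J\cap B\subseteq\sparse\cap B$. The only difference is that you spell out the invariance/agreement step in more detail than the paper, which simply cites \eqref{eq:Garden}; this is a matter of exposition, not substance.
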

\begin{proof}
By Lemma \ref{t:ugh} we have $\sparse\cap\B\cap \Ball_\delta(\point)=A_J\cap B\cap \Ball_\delta(\point)$
for a unique $J\in \mathcal{J}_s$. 
Thus by \eqref{eq:Garden} at all points in $\Ball_{\delta/2}(\point)$ the Douglas-Rachford operator 
corresponding to $A_s$ and $B$ is equivalent to 
the Douglas-Rachford operator corresponding to $A_J$ and $B$,  whose intersection includes $\point$.
Applying Theorem \ref{thm:convDR}, shifting the subspaces appropriately,  we see that the iterates converge to some point 
$\hat{x}\in\Fix T_{DR}$ with linear rate for all initial points $x^0\in\Ball_{\delta/2}(\point)$.
The last statement follows from \eqref{eq:Garden} and Theorem \ref{thm:convDR}.
\end{proof}


\section{Examples}
\subsection{Numerical Demonstration}
We demonstrate the above results on the following synthetic numerical example. 
We construct a sparse object with $328$ uniform random positive and negative point-like sources in a 256-by-256 pixel field 
and randomly sample the Fourier transform of this object at a ratio of 1-to-8.  This yields $8192$ affine constraints. 
Local convergence results are illustrated in Figure \ref{f:SparseFourier} where the initial points $x^0$
are selected by uniform random $\left(-\delta/512,\delta/512\right)$ perturbations of the true solution 
in order to satisfy the assumptions of Theorems \ref{t:AP convergence} and \ref{t:convergence DR}.
The alternating projections  and Douglas-Rachford algorithms are shown respectively in panels (a)-(b) and (c)-(d) of Figure \ref{f:SparseFourier}.  
We show both the step lengths per iteration as well as the {\em gap distance} at each iteration defined as 
\begin{equation}\label{eq:gap}
   (\mbox{gap distance})^k\equiv \|P_{\sparse} x^k - P_Bx^k\|.
\end{equation}
Monitoring the gap allows one to ensure that the algorithm is indeed converging to a point 
of intersection instead of just a best approximation point.  
In panels (a) and (c) we set the sparsity parameter $s=328$, exactly the number of nonzero elements in the original 
image.  Panels (b) and (d) demonstrate the effect of overestimating the sparsity parameter,  $s=350$, on algorithm performance.  
The convergence of Douglas-Rachford for the case $s=350$ is not covered in our theory, however our numerical experiments indicate 
that one still achieves a linear-looking convergence over cycles, albeit with a very poor rate constant.   This remains to be 
proven.  
\begin{figure}[H]
(a) \includegraphics[width=0.45\linewidth]{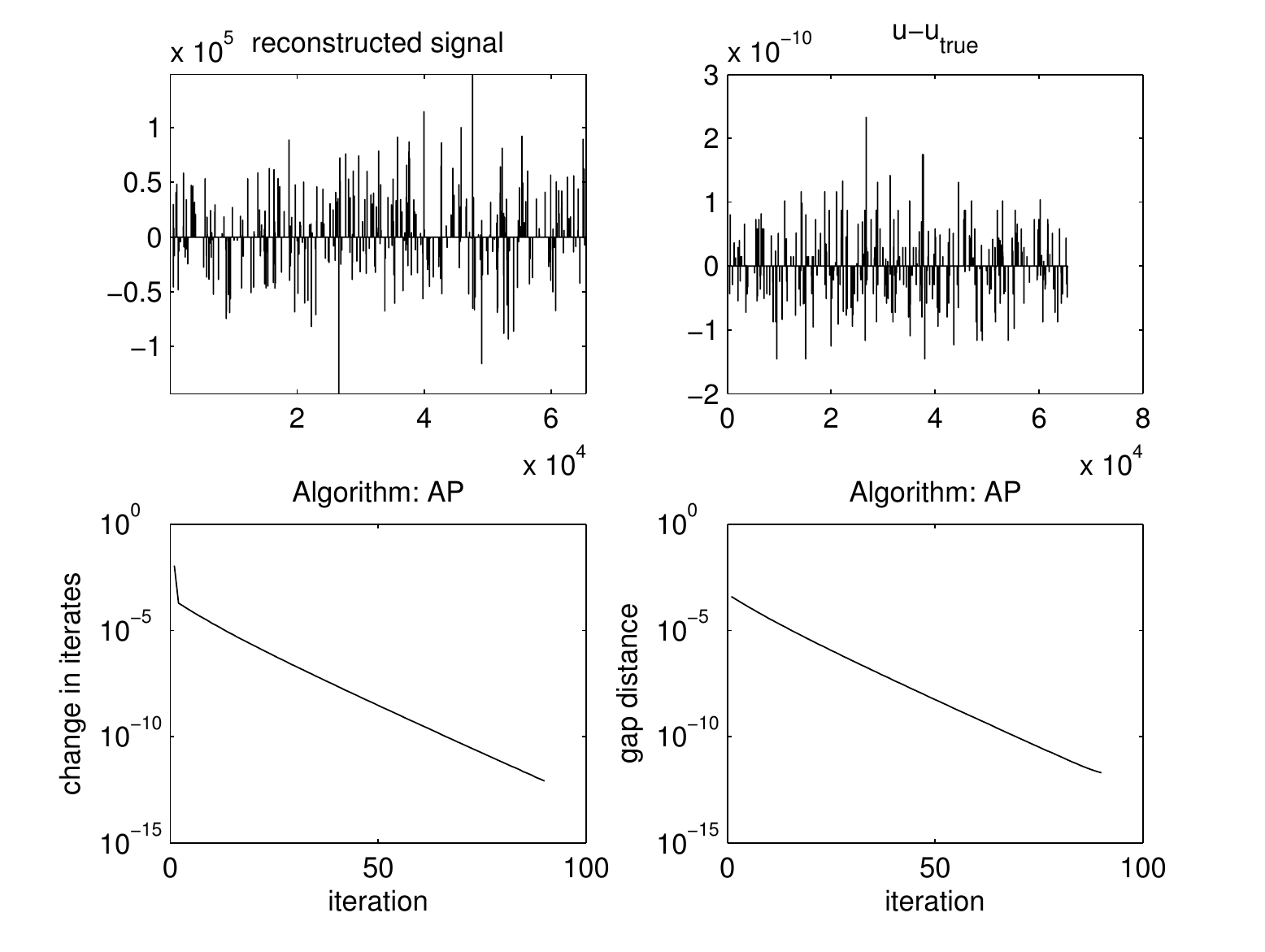}\hfill 
(b) \includegraphics[width=0.45\linewidth]{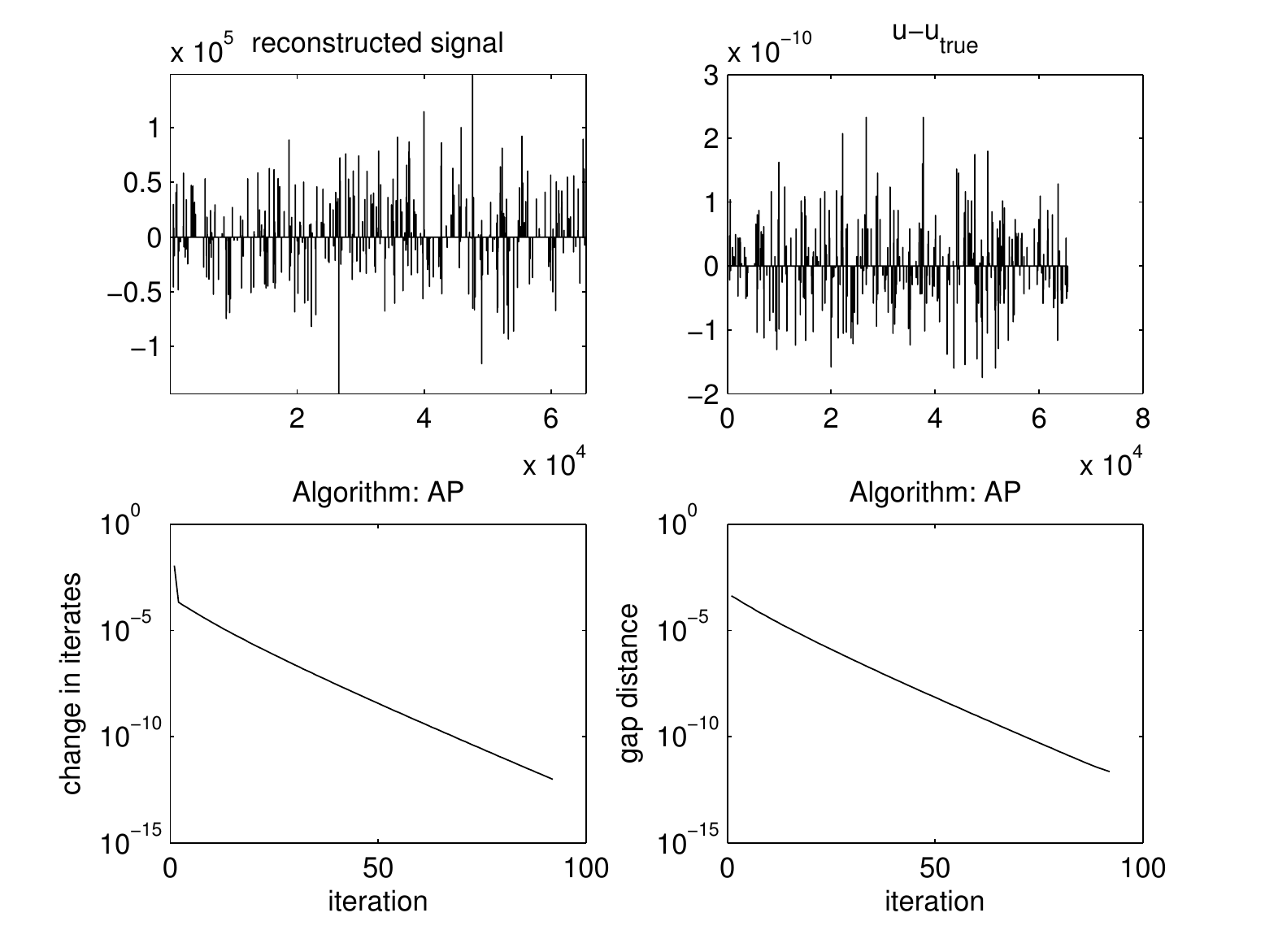}
(c) \includegraphics[width=0.45\linewidth]{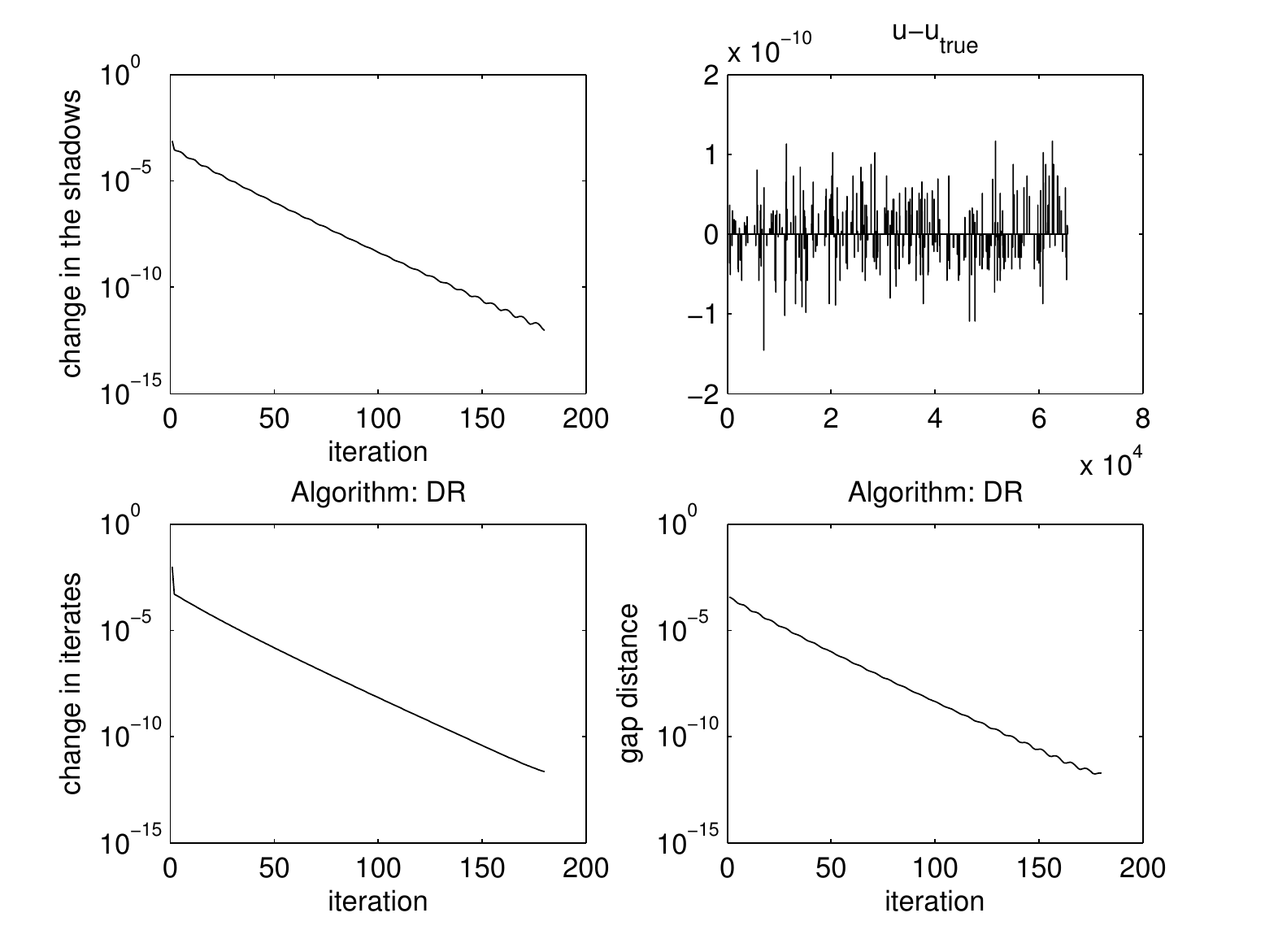}\hfill 
(d) \includegraphics[width=0.45\linewidth]{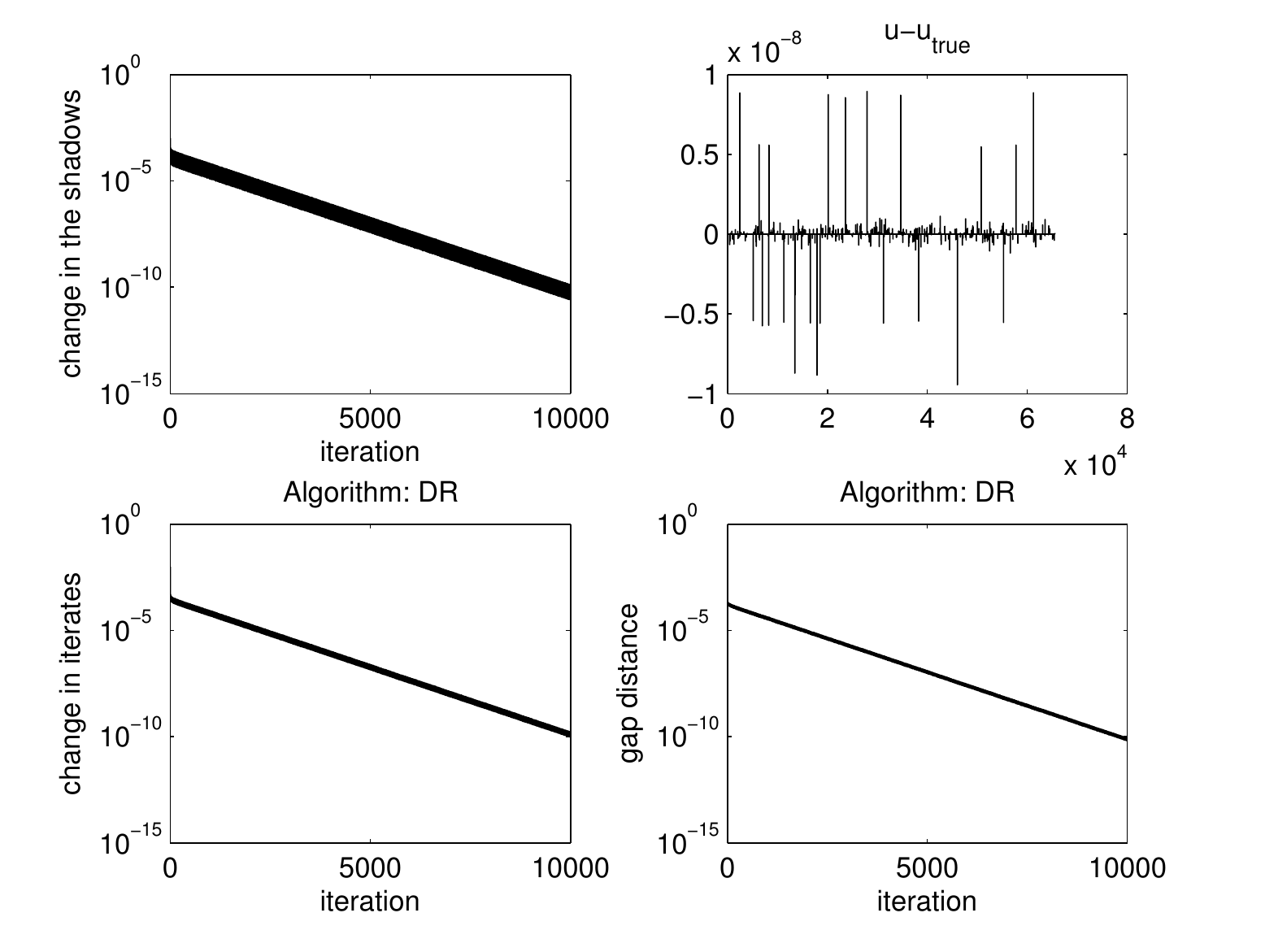}
\caption{ \label{f:SparseFourier}
(a) shows the convergence of alternating projections  in the case where the sparsity is exact, $s=328$. (b) shows the same with sparsity assumed too big, $s=350$.
In (c) and (d) we have the corresponding plots for Douglas-Rachford.  Case (d) is not covered by our theory. }
\end{figure}

The second synthetic example, shown in Figure \ref{f:RIP}, demonstrates global performance
of the algorithms and illustrates the results in Theorem
 \ref{t:AP convergence}, Theorem \ref{t:convergence DR} and Corollary \ref{t:API}. 
The solution is the vector $\point \equiv (10,0,0,0,0,0,0,0)$ and the affine subspace is the one generated 
by the matrix in \eqref{eq:matrix}. 
This matrix fulfills the assumptions of Corollary \ref{t:API}, as shown in Section \ref{ex:Ninja}. 
For the cases (a) and (c) the initial point $x^0$ can be written as $x^0\equiv \point + u$ where $u$ is a vector with 
uniform random values from the interval $(-1,1)$. 
The initial values hence fulfill the assumptions of Theorems \ref{t:AP convergence} and \ref{t:convergence DR}.
For (b) and (d) again the initial point $x^0$ can be written as $x^0\equiv \point + u$ while $u$ is now a 
vector with uniform random values from the interval $(-100,100)$. 
As expected, the sequence of alternating projections  converges to the true solution in (c).
The case for Douglas-Rachford however, shown in (d), is not covered by our theory.

\begin{figure}[H]
(a)\includegraphics[scale=.45]{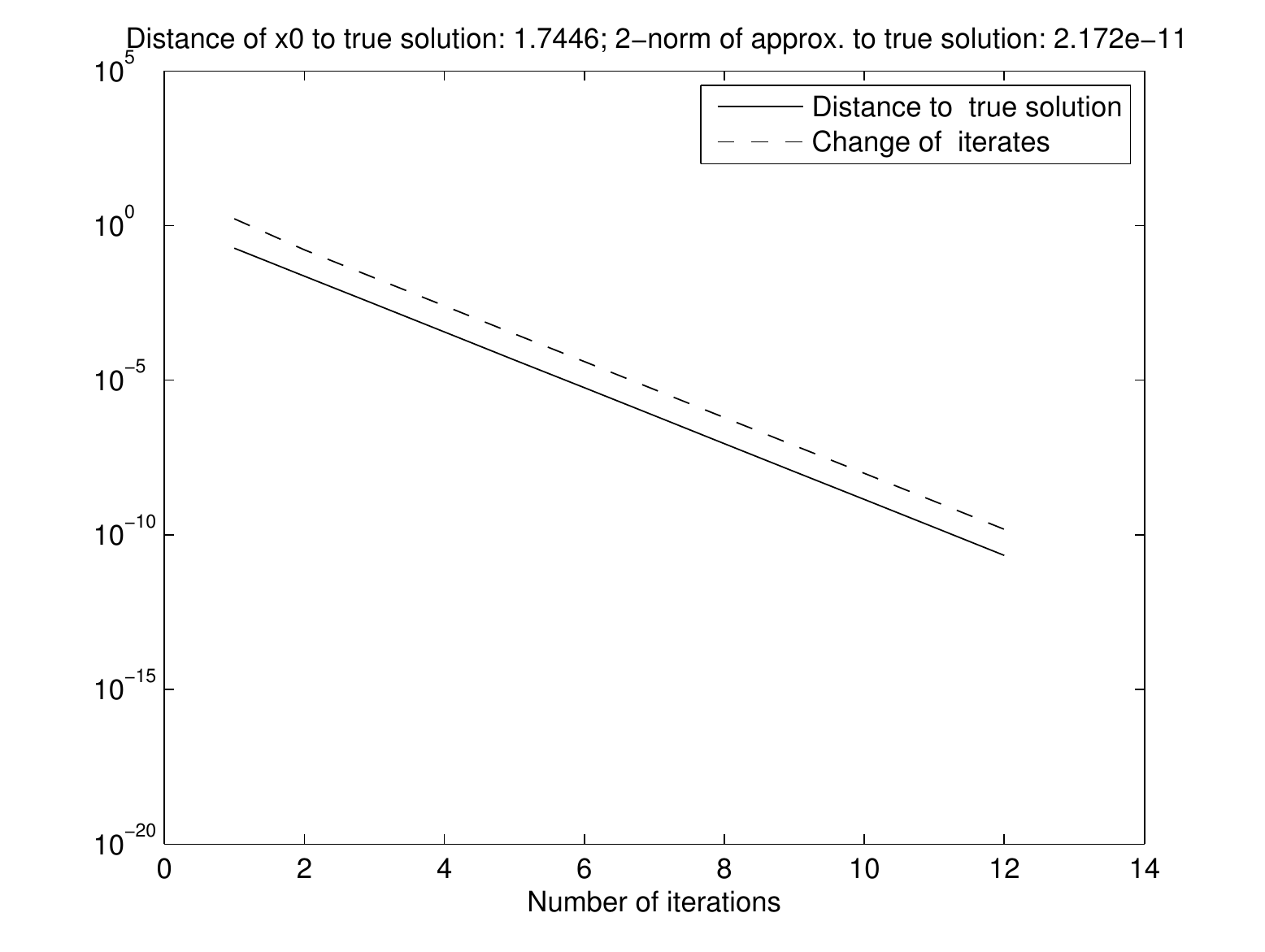}\hfill
(b)\includegraphics[scale=.45]{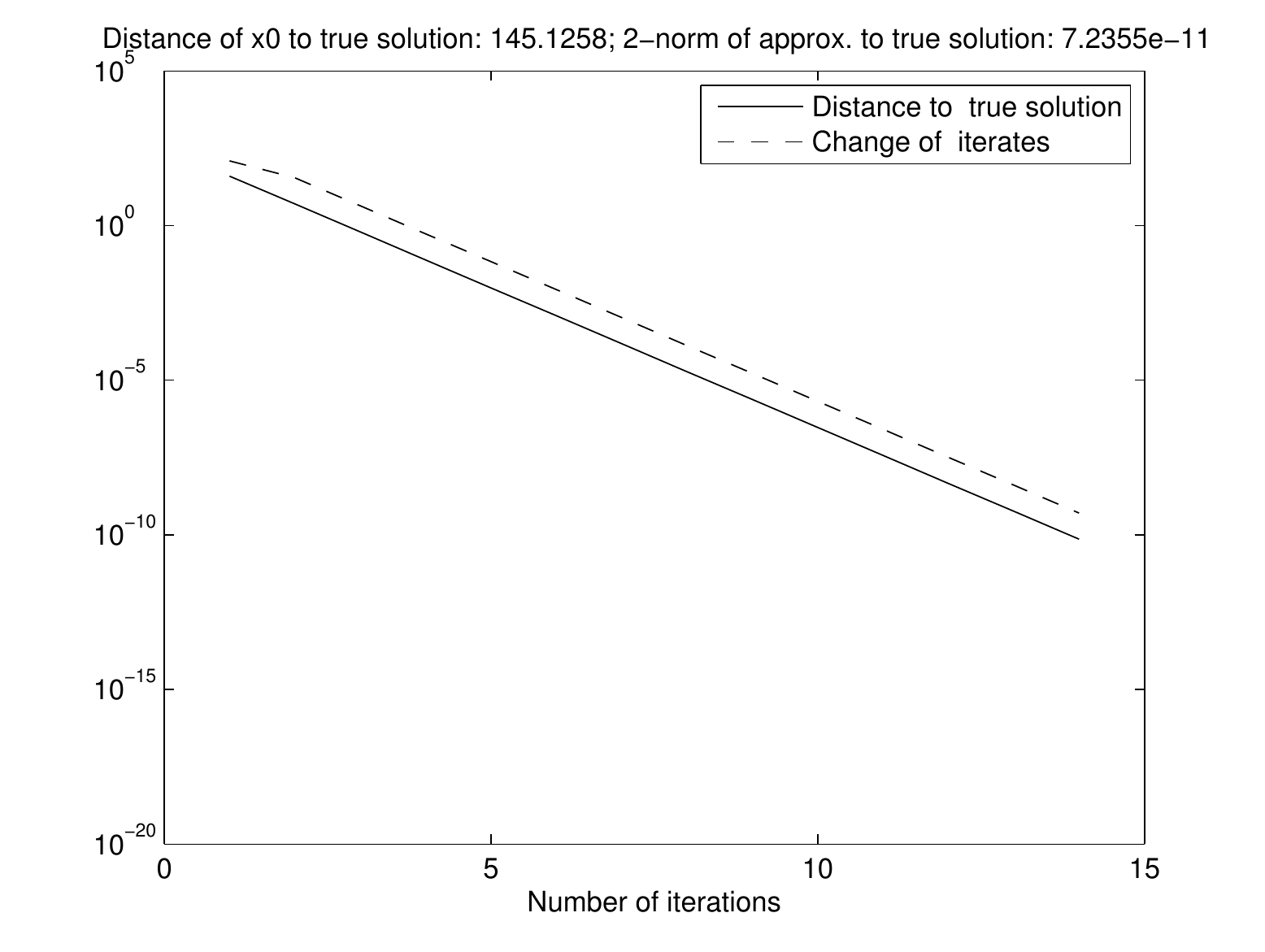}
(c)\includegraphics[scale=.45]{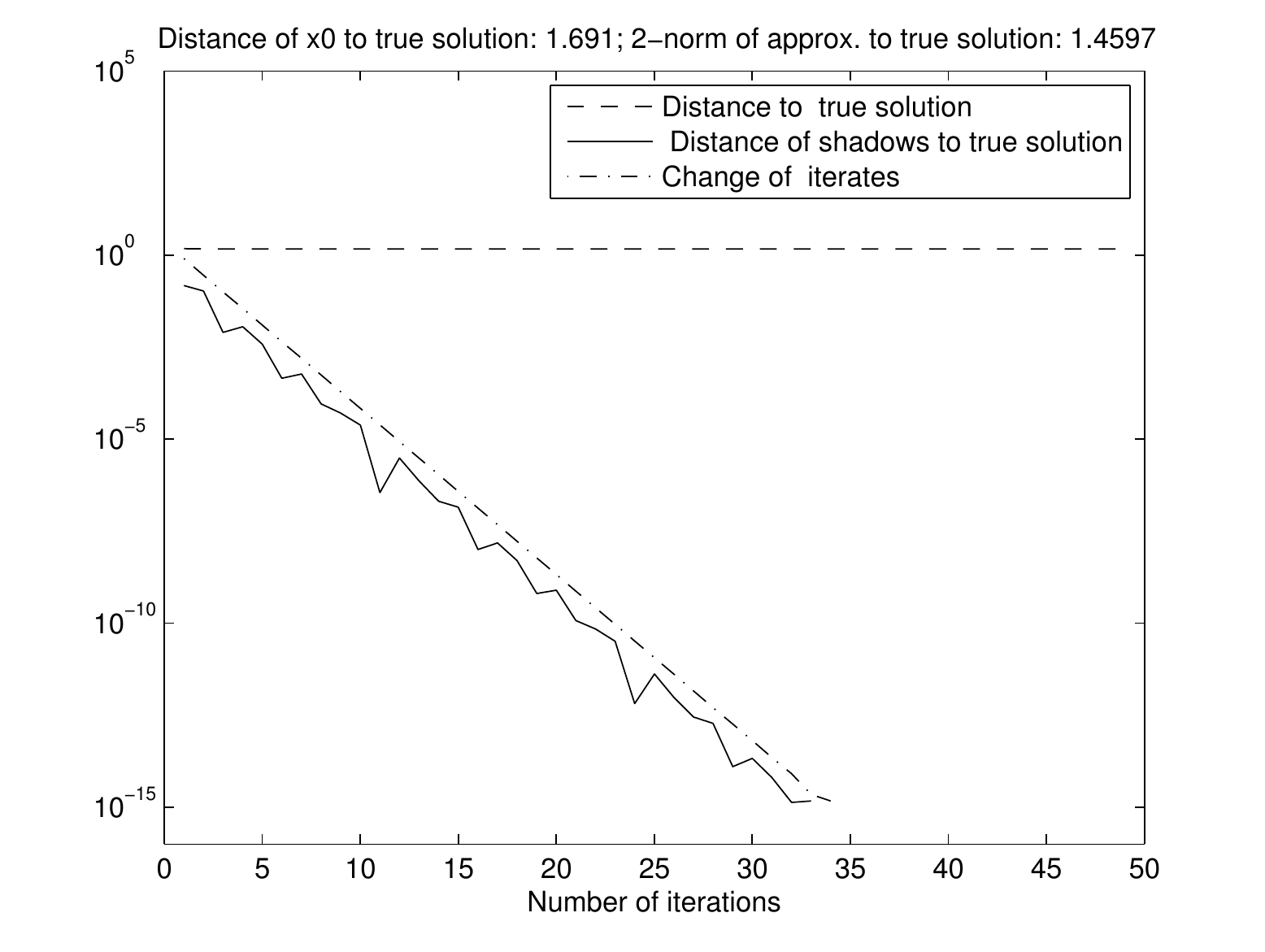}\hfill
(d)\includegraphics[scale=.45]{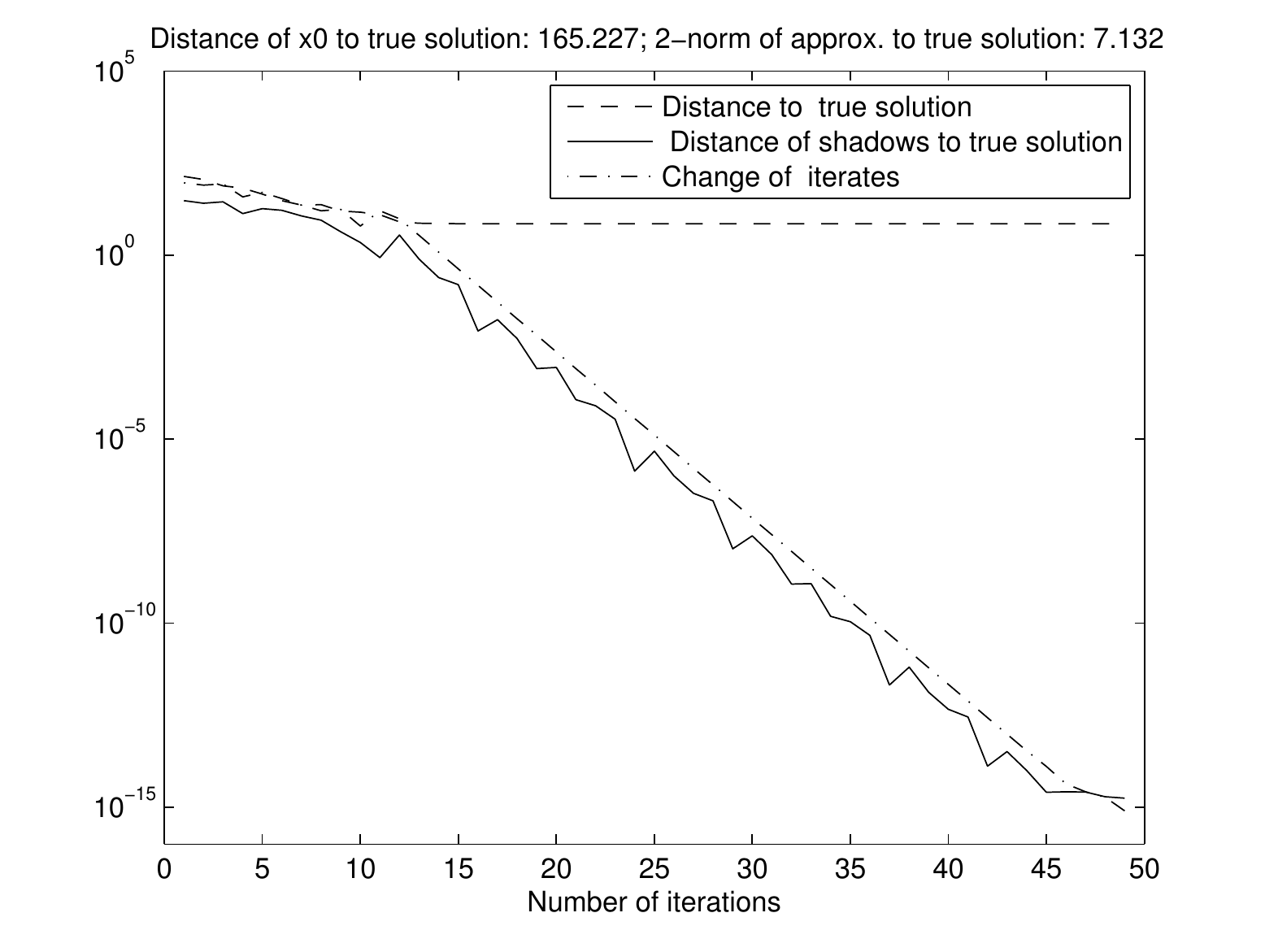}
\caption{\label{f:RIP} Example with an affine subspace generated by the matrix from Section \ref{ex:Ninja}: 
(a) shows the local convergence as shown in Theorem \ref{t:AP convergence}, 
(b) is an example of global convergence of alternating projections  as stated in Corollary \ref{t:API}. 
(c) is an example of local convergence of Douglas-Rachford to its fixed point set while the shadows converge to the intersection,  
as proven in Theorem \ref{t:convergence DR}. This example also shows that the iterates converge to a fixed point that is not in 
the intersection, as proven in Theorem \ref{t:AAR convergence}.
The plot (d) is an example where Douglas-Rachford appears to converge globally. 
This behavior is not covered by our theory. }
\end{figure}

\subsection{Analytic examples}
\subsubsection{Example of a matrix satisfying assumptions of Corollary \ref{t:API}}\label{ex:Ninja}
Finding nonsquare matrices satisfying \eqref{eq:RIP} or deciding whether or not a matrix fulfills some similar
condition is, in general, hard to do -- but not impossible.  
In this section we provide a concrete example of a nonsquare matrix satisfying the assumptions of 
Corollary \ref{t:API}. 

We take the matrix
\begin{equation}\label{eq:matrix}
M =  \frac{1}{\sqrt{8}}\begin{pmatrix}
1 & 1 & 1 & 1 & 1 & 1 & 1 & 1 \\ 1 & 1 & 1 & 1 & -1 & -1 & -1 & -1 \\ 1 & 1 & -1 & -1 & 1 & 1 & -1 & -1 \\ 1 & -1 & 1 & -1 & 1 & -1 & 1 & -1 \\ 1 & 1 & -1 & -1 & -1 & -1 & 1 & 1 \\ 1 & -1 & -1 & 1 & 1 & -1 & -1 & 1 \\ 1 & -1 & 1 & -1 & -1 & 1 & -1 & 1 
\end{pmatrix}   
\end{equation}
The rows of $M$ are pairwise orthogonal and so $MM^\top =\Id _7$.
We compute the constant $\delta$ in \eqref{eq:RIP} for $s = 2$ to get a result for the recovery of 
1-sparse vectors with alternating projections.  Recall that $s$ can be  
larger than the sparsest feasible solution (see  Remark \ref{r:s too big}). 
In general, a normalized 2-sparse vector in $\R^8$ has the form
\[x = (\cos(\alpha), \sin(\alpha),0,0,0,0,0,0),\]
where the position of the $\sin$ and of the $\cos$ are of course arbitrary.
The squared norm of the product $Mx$ is equal to
\[ \|Mx\|_2^2 = \frac{1}{8}\sum_{i=1}^7|\cos(\alpha) + z_i\sin(\alpha)|^2 ,  \]
where $z_i\in\{-1,1\}$. We note that the inner products of distinct columns of $M$ are $-1,1$, so $\sum_{i=1}^7 z_i = \pm 1$. Then
\begin{align*}
& \frac{1}{8}\sum_{i=1}^7|\cos(\alpha) + z_i\sin(\alpha)|^2 \\
= & \frac{1}{8}\sum_{i=1}^7\cos(\alpha)^2 + 2z_i\sin(\alpha)\cos(\alpha) + \sin(\alpha)^2\\
= & \frac{1}{8}(7\pm\sin(2\alpha))\in\left[\frac{3}{4}, 1\right].
\end{align*}
This means that $\frac{3}{4}\enorm{x}^2\leq \enorm{Mx}^2 \leq \enorm{x}^2 \quad \forall x\in A_2$, 
where $A_2$ is the set of 2-sparse vectors in $\R^8$.
In other words, we can recover any $1$-sparse vector with the method of alternating projections.

\subsubsection{An easy example where alternating projections and Douglas-Rachford iterates don't converge}
The following example, discovered with help from Matlab's Symbolic Toolbox, shows some of the more interesting 
pathologies that one can see with these algorithms when not starting sufficiently close to a solution.

Let  $n=3, m=2, s=1$ and 
\[
M = \begin{pmatrix}1 & -\frac{1}{2} & 0 \\ 0 & \frac{1}{2} & -1\end{pmatrix},\qquad 
p=\begin{pmatrix}-5\\5\end{pmatrix}\]
The point $(0,10,0)^\top$ is the sparsest solution to the equation $Mx = p$ and the affine space $\B$ is 
\[
\B =\begin{pmatrix}0  \\ 10 \\ 0 \end{pmatrix} + \lambda \begin{pmatrix}1  \\ 2 \\ 1 \end{pmatrix},\qquad \quad \textup{with}\ \lambda\in \R.
\]
If we take the initial point (apologies for the numbers!)
$$x^0 = \left( \tfrac{38894857328700073}{237684487542793012780631851008},\tfrac{-297105609428507214758454580565}{118842243771396506390315925504}, 
\tfrac{-1188422437713940163629828887893}{237684487542793012780631851008} \right),$$
then $T_{DR}x^0 = x^0 + (-5,0,5)$ and $T_{DR}^2x^0 = x^0$.

Note that this example is different from the case in Theorem \ref{t:AAR convergence}: in Theorem \ref{t:AAR convergence} 
we establish that,  if $s< \textup{rank}(M)$, then the fix point set of $T_{DR}$ is strictly larger than the solution set to problem 
\eqref{eq:feasibility}. The concrete case detailed here also satisfies $s< \textup{rank}(M)$, however, with the given $x^0$ we are 
not near the set of fixed points, but in a {\em cycle} of $T_{DR}$.

If, on the other hand,  we take the point $\hat{x}^0 = \left( -4 , 0, 0 \right)$, then $P_B\hat{x}^0 = \left( -4 , 2, -4 \right)$
and the set $P_{A_1}P_{B}\hat{x}^0$ is equal to $\left\lbrace \left( -4 , 0, 0 \right), \left( 0 , 0, -4 \right) \right\rbrace$.
The projection $P_B\left( 0 , 0, -4 \right)$ is again the point $\left( -4 , 2, -4 \right)$.
This shows that the alternating projection \eqref{eq:AP} iteration is stuck at the points $\left( -4 , 0, 0 \right)$ and 
$\left( 0 , 0, -4 \right)$ which are clearly not in the intersection $A_{1}\cap B=\{(0,10,0)^\top \}$.  This also highlights 
a manifestation of the multivaluedness of the projector $P_{A_1}$.

\section{Conclusion}
The usual avoidance of nonconvex optimization over convex relaxations is not always warranted.  
In this work we have determined sufficient conditions under which simple algorithms applied to 
nonconvex sparse affine feasibility are guaranteed to converge globally at a linear rate.   We have also shown 
local convergence of the prominent Douglas-Rachford algorithm applied to this problem.  These results are 
intended to demonstrate the potential of recently developed analytical tools for understanding nonconvex 
fixed-point algorithms in addition to making the broader point that nonconvexity is not {\em categorically} bad. 
That said, the global results reported here rely heavily on the linear structure of the problem, and 
local results are of limited practical use.  
Of course, the decision about whether to favor a convex relaxation over a nonconvex formulation 
depends on the structure of the problem at hand and many open questions remain. 
First and foremost among these is:  what are {\em necessary} conditions for global convergence of 
simple algorithms to global solutions of nonconvex problems? The apparent robust global behavior of 
Douglas-Rachford has eluded explanation.  What are conditions for global convergence of the 
Douglas-Rachford algorithm for this problem?  What happens to these algorithms when the chosen 
sparsity parameter $s$ is too small?   At the heart of these questions lies a long-term research program into 
regularity of nonconvex variational problems, the potential impact of which is as broad as it is deep. 

\section*{Acknowledgements}  We thank the anonymous referees for their thorough and helpful suggestions for 
improving the original manuscript.  

\addcontentsline{toc}{section}{Literature}

\end{document}